\documentclass[11pt, twoside, leqno]{article}

\usepackage{amssymb}
\usepackage{amsmath}
\usepackage{amsthm}
\usepackage{color}
\usepackage{mathrsfs}
\usepackage{indentfirst}
\usepackage{txfonts}

\allowdisplaybreaks

\pagestyle{myheadings}\markboth{\footnotesize\rm\sc
Chaoan Li, Xianjie Yan and Dachun Yang}
{\footnotesize\rm\sc Fourier Transform of Anisotropic Hardy Spaces}

\textwidth=15cm
\textheight=21cm
\oddsidemargin 0.46cm
\evensidemargin 0.46cm

\parindent=13pt

\def\red{\color{red}}

\def\rr{{\mathbb R}}
\def\rn{{\mathbb{R}^n}}

\def\zz{{\mathbb Z}}
\def\cc{{\mathbb C}}

\def\nn{{\mathbb N}}

\def\CB{{\mathcal B}}

\def\cm{{\mathcal M}}

\def\cp{{\mathcal P}}

\def\cs{{\mathcal S}}

\def\fz{\infty }
\def\az{\alpha}

\def\lz{\lambda}

\def\tz{\theta}
\def\sz{\sigma}

\def\lf{\left}
\def\r{\right}

\def\la{\langle}
\def\ra{\rangle}

\def\ls{\lesssim}
\def\gs{\gtrsim}

\def\noz{\nonumber}

\def\st{\subset}

\def\loc{{\mathop\mathrm{\,loc\,}}}
\def\supp{\mathop\mathrm{\,supp\,}}

\def\XXint#1#2#3{{\setbox0=\hbox{$#1{#2#3}{\int}$ }
\vcenter{\hbox{$#2#3$ }}\kern-.6\wd0}}

\def\unp{\underline{p}}

\def\la{{\langle}}
\def\ra{{\rangle}}
\def\({\left(}
\def \){ \right)}

\def\CB{{\mathcal B}}



\def\one{\mathbf{1}}

\def\Hatom{{H_{X,{\rm atom}}^{A,q,d}}}

\newtheorem{theorem}{Theorem}[section]
\newtheorem{lemma}[theorem]{Lemma}
\newtheorem{corollary}[theorem]{Corollary}
\newtheorem{assumption}[theorem]{Assumption}

\theoremstyle{definition}
\newtheorem{remark}[theorem]{Remark}
\newtheorem{definition}[theorem]{Definition}
\renewcommand{\appendix}{\par
\setcounter{section}{0}%
\setcounter{subsection}{0}%
\setcounter{subsubsection}{0}%
\gdef\thesection{\@Alph\c@section}%
\gdef\thesubsection{\@Alph\c@section.\@arabic\c@subsection}%
\gdef\theHsection{\@Alph\c@section.}%
\gdef\theHsubsection{\@Alph\c@section.\@arabic\c@subsection}%
\csname appendixmore\endcsname
}

\numberwithin{equation}{section}
\begin{document}
\title{\bf\Large Fourier Transform of Anisotropic Hardy Spaces Associated
with Ball Quasi-Banach Function Spaces and Its
Applications to Hardy--Littlewood Inequalities
\footnotetext{\hspace{-0.35cm} 2020 {\it Mathematics Subject
Classification}.
Primary 42B10; Secondary 42B30, 42B35, 46E30.
\endgraf {\it Key words and phrases.} expansive matrix,
ball quasi-Banach function space,
anisotropic Hardy space,
Fourier transform, Hardy--Littlewood inequality.
\endgraf This project is partially supported by the National Key Research
and Development Program of China (Grant No. 2020YFA0712900),
the National Natural Science Foundation of China
(Grant Nos. 11971058 and 12071197),
China Postdoctoral Science Foundation (Grant No. 2022M721024),
and the Open Project Program of Key Laboratory of Mathematics
and Complex System of Beijing Normal University (Grant No. K202304).}}
\date{}
\author{Chaoan Li, Xianjie Yan and Dachun Yang\,\footnote{Corresponding
author, E-mail: \texttt{dcyang@bnu.edu.cn}/{\red{July 13, 2023}}/Final version.}}
\maketitle
\date{}
\maketitle

\vspace{-0.8cm}

\begin{center}
\begin{minipage}{13cm}
{\small {\bf Abstract}\quad
Let $A$ be a general expansive matrix and
$X$ be a ball quasi-Banach function space on $\mathbb R^n$,
whose certain power (namely its convexification)
supports a Fefferman--Stein vector-valued
maximal inequality and the associate space of
whose other power supports the boundedness of the powered
Hardy--Littlewood maximal operator.
Let $H_X^A(\mathbb{R}^n)$ be the anisotropic Hardy
space associated with $A$ and $X$.
The authors first prove that the Fourier transform
of $f\in H^A_{X}(\mathbb{R}^n)$ coincides with a
continuous function $F$ on $\mathbb{R}^n$ in the sense of tempered
distributions.
Moreover, the authors obtain a pointwise inequality that the
function $F$ is less than the product of the anisotropic Hardy space norm of $f$ and
a step function with respect to the transpose
matrix of the expansive matrix $A$.
Applying this, the authors further induce a higher order
convergence for the function
$F$ at the origin and give a variant of the Hardy--Littlewood
inequality in
$H^A_{X}(\mathbb{R}^n)$. All these results have a wide range
of applications. Particularly, the authors apply these results,
respectively, to classical
(variable and mixed-norm) Lebesgue spaces,
Morrey spaces, Lorentz spaces, Orlicz spaces,
Orlicz-slice spaces, and  local generalized Herz spaces and,
even on the last five function spaces,
the obtained results are completely new.
}
\end{minipage}
\end{center}


\vspace{0.2cm}
\section{Introduction\label{s0}}
In 1972, Fefferman and Stein \cite{fs72} introduced a famous problem,
that is,
what is the characterization of the Fourier
transform
$\widehat{f}$ of a distribution $f$ from the classical Hardy space $H^p(\rn)$.
Recall that, in 1974, Coifman \cite{coi74} characterized
$\widehat{f}$ via the entire
function
of exponential type for $n=1$, where $f\in H^p(\rr)$ with $p\in(0,1]$.
Since then, many researchers investigated the characterization of
$\widehat{f}$
with the distribution $f$ from Hardy spaces with $n\geq2$;
see,
for instance, \cite{bw13,col82,gk01,tw80}.
In particular, Taibleson and Weiss \cite{tw80}
proved that, for any given $p\in(0,1]$, the Fourier transform of
$f\in H^p(\rn)$
coincides with a continuous function $F$ in the sense of tempered
distributions
and there exists a positive constant $C$, independent of $f$ and $F$,
such that,
for any $x\in\rn$,
\begin{equation}\label{eqabs1}
|F(x)|\leq C\|f\|_{H^p(\rn)}|x|^{n(\frac{1}{p}-1)}.
\end{equation}
This further implies the following generalization of the
Hardy--Littlewood
inequality that
\begin{equation}\label{eqabs2}
\left[\int_\rn|x|^{n(p-2)}|F(x)|^p\,dx\right]^{1/p}
\leq C\|f\|_{H^p(\rn)},
\end{equation}
where $C$ is a positive constant independent of $f$ and $F$
(see \cite[p.\,128]{ste93}).

Recently, Sawano et al. \cite{shyy17} originally introduced
the ball quasi-Banach function space $X$ and the associated Hardy space
$H_X(\rn)$. In their article \cite{shyy17}, by assuming that the
Hardy--Littlewood maximal operator satisfies a
Fefferman--Stein vector-valued inequality on certain power
(namely its convexification) of
$X$ and the powered Hardy--Littlewood maximal operator is
bounded on the associate space
of certain power of $X$,
Sawano et al. established various maximal function characterizations
and several other characterizations of $H_X(\rn)$, respectively,
in terms of atoms, molecules, and Lusin area functions.
Indeed,
the real-variable theory of the Hardy space $H_X(\rn)$
associated with the ball quasi-Banach function space $X$
provides an unified framework  of various
types of Hardy spaces, which includes many important Hardy
spaces that have
been studied before, such as classical
Hardy spaces, mixed-norm Hardy spaces, variable Hardy spaces, and
Orlicz--Hardy spaces. For more recent developments on this topic,
we refer the reader to \cite{cwyz20, hks22,shyy17, wyy,yyy20,zyy21}.
Based on the recent rapid developments of
the theory of the Hardy space $H_X(\rn)$ and the aforementioned works
on the characterization of the Fourier transform of the classical Hardy
space $H^p(\rn)$ or their generalization,
very recently, Huang et al. \cite{hcy22} showed that both
\eqref{eqabs1} and \eqref{eqabs2} hold true in $H_X(\rn)$.

On the other hand, in 2003, motivated by the application of discrete
groups of dilations in wavelet theory,
Bownik \cite{Bownik} introduced and investigated the anisotropic
Hardy space $H^p_{A}(\rn)$ with $p\in(0,\fz)$,
where $A$ is a general expansive matrix on $\rn$,
which includes both the classical
Hardy space and the parabolic Hardy space of Calder\'on and Torchinsky
\cite{ct75} as special cases.
Since then, various variants of classical Hardy spaces
over anisotropic Euclidean spaces (see, for instance,
\cite{blyz08weight,blyz10weight,cgn17,cgn19,cgn19no2,
hcy21,lyy23,lhy12,liu2020,lyz2023}) or, more generally,
over spaces of homogeneous type in the sense of
Coifman and Weiss \cite{CWhomo,CWhomo2} (see, for instance,
\cite{cgp22,gkp21,gn18,hhllyy19,hlyy19})
have been introduced and
their real-variable theories have been well developed.
In 2013, Bownik and Wang \cite{bw13} generalized inequalities \eqref{eqabs1}
and \eqref{eqabs2} to the anisotropic
Hardy space $H_A^p(\rn)$ with the known characterization
of $H_A^p(\rn)$. Furthermore,
Liu \cite{liu2020} pointed out that \eqref{eqabs1}
and \eqref{eqabs2} also apply to the setting of
the variable anisotropic Hardy space
$H_A^{p(\cdot)}(\rn)$.
Later, Liu et al. \cite{llz22} proved that \eqref{eqabs1}
and \eqref{eqabs2} hold true for the
anisotropic mixed-norm Hardy space
$H_A^{\vec{p}}(\rn)$.
Recall that
the anisotropic Hardy space $H_X^A(\rn)$ associated with both $A$
and $X$ was first introduced and studied by Wang et al. \cite{wyy22},
in which Wang et al. characterized $H_X^A(\rn)$
in terms of maximal functions,
atoms, finite atoms, and molecules.
Moreover, the variable anisotropic Hardy space
$H_A^{p(\cdot)}(\rn)$ and the anisotropic mixed-norm Hardy space
$H_A^{\vec{p}}(\rn)$ are the special cases of $H_X^A(\rn)$ with
$X:= L^{p(\cdot)}(\rn)$ or $X: = L^{\vec{p}}(\rn)$, respectively.
Based on these results,
it is natural to ask whether
\eqref{eqabs1} and \eqref{eqabs2}
also hold true for $H_X^A(\rn)$.
The goal of this article is to give a positive answer to this question.

Let $A$ be a dilation and $X$ a ball quasi-Banach function space on $\rn$.
In this article, under the assumptions that
the Hardy--Littlewood maximal
operator satisfies some Fefferman--Stein vector-valued inequality
on certain power of
$X$, the powered Hardy--Littlewood maximal operator is bounded
on the associate
space of certain power of $X$,
and the $X$-quasi-norm of
characteristic functions of anisotropic balls has a lower bound,
we get rid of the dependence on the concavity of $\|\cdot\|_X$.
With these mild assumptions
and two uniform pointwise estimates
we show that the Fourier
transform $\widehat{f}$ of $f\in H_X^A(\rn)$
coincides with a continuous function $F$ on $\rn$
in the sense of tempered distributions and
prove that an
inequality similar to \eqref{eqabs1} also holds true
for any $f\in H_X^A(\rn)$.
Furthermore,
applying this and a technical inequality about the
value of the Fourier transform of atoms,
we further conclude a higher order convergence
of the continuous function $F$ at the origin
and then show that an inequality
similar to \eqref{eqabs2} holds true for
$H_X^A(\rn)$,
which is a variant of the Hardy--Littlewood inequality in
$H_X^A(\rn)$.
It is remarkable that the results obtained in this article
have a wide range of generality because ball quasi-Banach
function spaces include lots of important
function spaces.
In particular, when these results are applied
to classical Lebesgue spaces,
variable Lebesgue spaces, and mixed-norm Lebesgue spaces,
the obtained conclusions coincide with the known ones;
morever, when these results are applied, respectively,
to Morrey spaces, Lorentz spaces,
Orlicz spaces,
Orlicz-slice spaces,
and  local generalized Herz spaces,
the obtained conclusions are completely new.
More applications of these results to new-found function
spaces are quite possible.

The remainder of this article is organized as follows.

In Sect. \ref{s1}, we first present definitions of expansive matrices,
ball quasi-Banach function spaces, and anisotropic Hardy spaces
associated with both $A$ and
$X$; see Definitions \ref{dilation}, \ref{BQBFS}, and \ref{HXA} below.

The aim of Sect. \ref{s2} is to prove the main result
(see Theorem \ref{s2t1} below), that is,
the Fourier transform $\widehat{f}$ of $f\in H_X^A(\rn)$
coincides with a continuous function $F$ in the sense of tempered
distributions. In order to achieve this,
we apply Lemmas \ref{s2l1} (some subtle estimates on derivatives of the
Fourier transform of the dilation of atoms)
and \ref{s2l3}
(some exquisite relations between the Euclidean norm and the step
homogeneous quasi-norm $\rho$ under consideration) to
establish a uniform pointwise estimate for atoms
(see Lemma \ref{s2l2} below).
Then Theorem \ref{s2t1} is proved by
this and some real-variable characterizations from \cite{wyy22},
especially its atomic decompositions.
Morever, we apply these results, respectively,
to classical Lebesgue spaces,
variable Lebesgue spaces, and mixed-norm Lebesgue spaces
and show that the obtained conclusions coincide with the known ones
[see Remark \ref{s2re1}(i)-(iv) below].
At the same time, we obtain a pointwise inequality
of the continuous function $F$, which suggests that the
anisotropic mixed-norm atoms must possess
vanishing moments in some
sense [see Remark \ref{s2re1}(v) below].

Applying the Fourier transform, in Sect. \ref{s3},
we present some further applications of Theorem \ref{s2t1}.
First, we prove that the above function $F$ has a higher order
convergence at
the origin
(see Theorem \ref{s3t1} below).
Second, we show that the term
$$|F(\cdot)|\min\left\{\left[\rho_{*}(\cdot)\right]^{1-\frac 1{p_-}-
\frac 1{q_0}+(d+1)\frac{\ln\lambda_-}{\ln b}},\,
\left[\rho_{*}(\cdot)\right]^{1-\frac 2{q_0}+(d+1)
\frac{\ln\lambda_-}{\ln b}}\right\}$$
is $L^{q_0}(\rn)$-integrable and a positive constant
multiple of the anisotropic Hardy space norm of $f$
can uniformly
controll this integral. Thus, we extend the
Hardy--Littlewood inequality to the setting of
anisotropic Hardy spaces
associated with ball quasi-Banach function spaces
(see Theorem \ref{s3t2} below).
Further, we apply these results,
respectively, to classical Lebesgue spaces,
variable Lebesgue spaces, and mixed-norm Lebesgue spaces
and prove that the obtained conclusions coincide with the known ones
(see Remark \ref{6.11rem} below).

As applications, in Sect. \ref{s4},
we apply Theorems \ref{s2t1}, \ref{s3t1}, and \ref{s3t2},
via verfiying all the necessary assumptions,
to five concrete examples of ball quasi-Banach function spaces,
namely
Morrey spaces (see Subsection \ref{s6-appl1} below),
Lorentz spaces (see Subsection \ref{s6-appl3} below),
Orlicz spaces (see Subsection \ref{s6-appl7} below),
Orlicz-slice spaces (see Subsection \ref{s6-appl2} below), and
local Herz--Hardy spaces (see Subsection \ref{s6-appl6} below).
In particular, we show that the anisotropic local Herz space
is a quasi-banach function space (see Theorem \ref{5.27.x1} below).
Through both a boundedness criterion of sublinear
operators on anisotropic local generalized Herz spaces
and its simple corollary (see Lemma \ref{vmbhl-lem}
and Corollary \ref{vmbhl-cor} below), we prove that the
anisotropic local generalized Herz space
supports a Fefferman--Stein vector-valued
maximal inequality and the associate space of
whose other power supports the boundedness of the powered
Hardy--Littlewood maximal operator (see Lemma \ref{vmbhl}
and Theorem \ref{mbhal} below).

At the end of this section, we make some conventions on notation.
Let $\nn:=\{1,2,\ldots\}$, $\zz_+:=\nn\cup\{0\}$, $\zz_+^n:=(\zz_+)^n$,
and $\bf{0}$ be the \emph{origin} of $\rn$.
For any multi-index $\az:=(\az_1,\ldots,\az_n)\in\mathbb{Z}_+^n$
and any $x:=(x_1,\ldots,x_n)\in\rn$, let $|\az|:=\az_1+\cdots+\az_n$,
$\partial^{\az}:=(\frac{\partial}{\partial x_1})^{\az_1}\cdots
(\frac{\partial}{\partial x_n})^{\az_n},$ and
$x^\az:=x_1^{\az_1}\cdots x_n^{\az_n}.$
We denote by $C$ a \emph{positive constant}
which is independent of the main parameters involved, but may vary from
line to line. We use $C_{(\az,\dots)}$ to denote a positive
constant depending on the indicated parameters $\az,\, \dots$.
The symbol $f\ls g$ means $f\leq  Cg$. If $f\ls g$ and $g\ls f$,
we then write $f\sim g$.
If $f \le C g$ and $g=h$ or $g\le h$, we then write $f\ls g = h$ or $f\ls g\le h$.
For any $q\in [1,\infty]$, we denote by $q'$ its \emph{conjugate
index}, that is,
$1/q+1/q'=1$. For any $x\in\rn$,
we denote by $|x|$ the $n$-dimensional
\emph{Euclidean metric} of $x$.
If $E$ is a subset of $\rn$,
we denote by ${\mathbf{1}}_E$ its \emph{characteristic function}
and by $E^\complement$ the set $\rn\setminus E$.
For any $r\in(0,\fz)$ and $x\in\rn$, we denote by $B(x,r)$
the ball centered at $x$ with the radius $r$, that is,
$B(x,r):=\{y\in\rn:\ |x-y|<r\}.$
For any ball $B$, we use $x_B$ to denote its center and $r_B$
its radius and we denote by $\lz B$ for any $\lz\in(0,\fz)$ the
ball concentric with $B$ having the radius $\lz r_B$.
We also use $\epsilon\to 0^{+}$
to denote $\epsilon\in(0,\infty)$ and
$\epsilon \to 0$. Let $X$ and $Y$ be two
normed vector spaces, respectively, with the norm
$\|\cdot\|_X$ and the norm $\|\cdot\|_Y$;
then we use $X\hookrightarrow Y$ to denote
$X\subset Y$ and there exists a positive constant
$C$ such that, for any $f\in X$,
$\|f\|_Y \le C \|f\|_X.$
At last, when we prove a theorem or the like,
we always use the same symbols in the wanted proved
theorem or the like.

\section{Preliminaries\label{s1}}

In this section, we first recall some symbols and concepts
on dilations (see, for instance, \cite{Bownik,shyy17}) as well
as ball quasi-Banach function spaces (see, for instance,
{\cite{shyy17,wyy,wyyz,yyy20,zwyy}}). We begin by recalling
the concept of expansive matrices from \cite{Bownik}.

\begin{definition}\label{dilation}
A real $ n\times n $ matrix $ A $ is called an {\it expansive matrix}
(shortly, a {\it dilation}) if
\begin{equation*}
\min_{\lz \in \sz(A)} |\lz|>1,
\end{equation*}
here and hereafter, $ \sz (A) $ denotes the {\it set of all
eigenvalues of $ A $}.
\end{definition}

Let $A$ be a dilation and
\begin{align}\label{2.14.x1}
b:=|\det A|,
\end{align}
where $\det A$ denotes the determinant of $A$.
Then it follows from \cite[p.\ 6, (2.7)]{Bownik}
that $b\in(1,\infty)$. By the fact that there exists an open and symmetry
ellipsoid $\Delta$, with $ |\Delta|=1 $, and an $ r\in(1,\fz) $ such that
$ \Delta \subset r\Delta \subset A\Delta $ (see \cite[p.\,5, Lemma 2.2]{Bownik}),
we find that, for any $ k\in \zz $,
\begin{equation}\label{B_k}
B_k := A^k \Delta
\end{equation}
is open, $ B_k \subset r B_k \subset B_{k+1} $, and $ |B_k|=b^k $.
For any $ x \in\rn $ and $ k \in \zz $, an ellipsoid $ x+B_k $ is
called a {\it dilated ball}. In what follows, we always let $ \CB $
be the set of all such dilated balls, that is,
\begin{equation}\label{ball-B}
\CB:=\{x+B_k :\  x \in \rn,k \in \zz \}
\end{equation}
and let
\begin{equation}\label{tau}
\tau := \inf\left\{ l \in\zz :\  r^l \geq 2\right\}.
\end{equation}

Let $\lz_-,\lz_+ \in (0,\infty)$ satisfy that
\begin{equation*}
1<\lz_- <\min\{ |\lz|:\ \lz \in\sigma(A)\}
\leq\max\{|\lz|:\ \lz\in\sigma(A)\}<\lz_+.
\end{equation*}
We point out that, if $A$ is diagonalizable over $\rr$,
then we may let
$$\lz_-:=\min\{|\lz|:\ \lz\in\sz(A)\}\ \mbox{and}\
\lz_+:=\max\{|\lz|:\ \lz\in\sigma(A)\}.$$
Otherwise, we may choose them sufficiently close to
these equalities in accordance with what we need in our arguments.

The following definition of the homogeneous quasi-norm is just
\cite[p.\,6,\ Definition 2.3]{Bownik}.

\begin{definition}\label{quasi-norm}
A {\it homogeneous quasi-norm}, associated with a dilation $A$,
is a measurable mapping $\varrho:\ \rn\rightarrow[0,\infty)$ such that
\begin{enumerate}
\item[{\rm(i)}] $\varrho (x)=0\Longleftrightarrow x=\bf{0}$,
where $\bf{0}$ denotes the origin of $\rn$;

\item[{\rm(ii)}] $\varrho(Ax)=b\varrho(x)$ for any $x\in\rn$;

\item[{\rm(iii)}] there exists an $A_0\in[1,\infty)$ such that,
for any $x,y\in\rn$,
$$\varrho(x+y)\leq A_0\,[\varrho(x)+\varrho(y)].$$
\end{enumerate}
\end{definition}

In the standard Euclidean space case,
let $A:=2\,I_{n\times n}$ and, for any $x\in\rn$, $\varrho(x):=|x|^n$.
Then $\varrho$ is an example of homogeneous quasi-norms
associated with $A$ on $\rn$. Here and thereafter,
$I_{n\times n}$ always denotes the $n\times n$ \emph{unit
matrix} and $|\cdot|$ the \emph{Euclidean norm} in $\rn$.

For a fixed dilation $A$, by \cite[p.\,6,\ Lemma 2.4]{Bownik},
we introduce the following quasi-norm
which is used throughout this article.

\begin{definition}\label{def-shqn}
Define the {\it step homogeneous quasi-norm $\rho$} on $\rn$,
associated with the dilation $A$, by setting
\begin{equation*}
\rho(x):= \left\lbrace
\begin{aligned}
&b^k\  &&{\rm if}\  x \in B_{k+1}\setminus B_k,\\
&0\  &&{\rm if}\  x=\bf{0},
\end{aligned}
\right.
\end{equation*}
where $b$ is the same as in \eqref{2.14.x1} and,
for any $k\in\zz$, $B_k$ the same as in \eqref{B_k}.
\end{definition}

Then $(\rn,\rho,dx)$ is a space of homogeneous type
in the sense of Coifman and Weiss \cite{CWhomo},
where $dx$ denotes the $n$-dimensional Lebesgue measure.
For more studies on the real-variable theory of function
spaces over spaces of homogeneous type,
we refer the reader to \cite{bdn20,
bdl18,bdl20,bl11,lj10,lj11,lj13,yhyy2,yhyy,
yy23}.

Throughout this article, we always let $A$ be a dilation
in Definition \ref{dilation}, $b$ the same as in \eqref{2.14.x1},
$\rho$ the step homogeneous quasi-norm in Definition \ref{def-shqn},
$\CB$ the set of all dilated balls in \eqref{ball-B},
$\mathscr M(\rn)$ the {\it set of all measurable functions} on $\rn$
and, for any $k\in\zz$, $B_k$ the same as in \eqref{B_k}.
Now, we recall the definition of ball quasi-norm
Banach function spaces (see \cite{shyy17}).

\begin{definition}\label{BQBFS}
A quasi-normed linear space $X\subset\mathscr M(\rn)$,
equipped with a quasi-norm $\|\cdot\|$ which
makes sense for the whole $\mathscr M(\rn)$, is called
a \emph{ball quasi-Banach function space} if it satisfies
\begin{enumerate}
\item[{\rm(i)}] for any $f\in\mathscr M(\rn)$, $\|f\|_X=0$
implies that $f=0$ almost everywhere;

\item[{\rm(ii)}] for any $f,g\in\mathscr M(\rn)$, $|g|\le|f|$ almost everywhere
implies that $\|g\|_X\le\|f\|_X$;

\item[{\rm(iii)}] for any $\{f_m\}_{m\in\nn}\subset\mathscr M(\rn)$
and $f\in\mathscr M(\rn)$, $0\le f_m\uparrow f$ as $m\to\infty$
almost everywhere implies that
$\|f_m\|_X\uparrow\|f\|_X$ as $m\to\infty$;

\item[{\rm(iv)}] $\one_B \in X$ for any dilated ball $B \in \CB$.
\end{enumerate}

Moreover, a {ball quasi-Banach function} space $X$ is
called a \emph{ball Banach function space} if it satisfies:
\begin{enumerate}
\item[{\rm(v)}] for any $f,g\in X$, $\|f+g\|_X\leq\|f\|_X+\|g\|_X$;

\item[{\rm(vi)}] for any given dilated ball $B\in\CB$,
there exists a positive constant $C_{(B)}$ such that,
for any $f\in X$,
\begin{equation*}
\int_B |f(x)|\,dx \leq C_{(B)}\|f\|_X.
\end{equation*}
\end{enumerate}
\end{definition}

Now, we recall the concept of the $p$-convexification
of ball quasi-Banach function spaces,
which is a part of \cite[Definition 2.6]{shyy17}.

\begin{definition}\label{Debf}
Let $X$ be a ball quasi-Banach function space and $p\in(0,\infty)$.
The \emph{$p$-convexification} $X^p$ of $X$ is defined by setting
$$X^p:=\lf\{f\in\mathscr M(\rn):\ |f|^p\in X\r\}$$
equipped with the \emph{quasi-norm} $\|f\|_{X^p}:=\||f|^p\|_X^{1/p}$
for any $f\in X^p$.
\end{definition}

The associate space $X'$ of any given ball
Banach function space $X$ is defined as follows;
see \cite[Chapter 1, Section 2]{bs88} or \cite[p.\,9]{shyy17}.

\begin{definition}
For any given ball Banach function space $X$,
its \emph{associate space} (also called the
\emph{K\"othe dual space}) $X'$ is defined by setting
\begin{equation*}
X':=\lf\{f\in\mathscr M(\rn):\ \|f\|_{X'}
:=\sup_{g\in X,\ \|g\|_X=1}\lf\|fg\r\|_{L^1(\rn)}<\infty\r\},
\end{equation*}
where $\|\cdot\|_{X'}$ is called the \emph{associate norm} of $\|\cdot\|_X$.
\end{definition}

Now, we recall the concept of the Hardy--Littlewood maximal operator.
In what follows, for any given
$p\in (0,\fz]$ and any given subset $E\st\rn$, let
$L_{\rm loc}^p(E)$ denote the
{\it set of all $p$-order locally integrable functions} on $E$.
Recall that the {\it Hardy--Littlewood maximal operator}
$\cm(f)$ of $f \in L_{\rm loc}^1(\rn)$ is defined by setting,
for any $x \in \rn$,
\begin{align}\label{5.23.x3}
\cm(f)(x):=\sup_{k\in\zz}\sup_{y\in x+B_k}
\frac{1}{|B_k|}\int_{y+B_k} |f(z)|\,dz
=\sup_{x\in B\in\CB}\frac{1}{|B|}\int_B|f(z)|\,dz,
\end{align}
where $\CB$ is the same as in {\eqref{ball-B}} and the supremum
in the second equality is taken over all the balls $B\in\CB$.
For any given $\alpha \in (0,\infty)$,
the {\it powered Hardy--Littlewood maximal operator}
$\cm^{(\alpha)}$ is defined by setting,
for any $f \in L^1_{\rm loc}(\rn)$ and $x \in \rn$,
\begin{equation*}
\cm^{(\alpha)}(f)(x)
:=\lf\{\cm\lf(|f|^\alpha\r)(x)\r\}^{\frac{1}{\alpha}}.
\end{equation*}

Throughout this article, we also need the following fundamental assumptions about
the boundedness of $\cm$ on the convexification of the
given ball quasi-Banach
function space and the boundedness of certain powered of $\cm$
on the associate space of its convexification.

\begin{assumption}\label{F-SVMI}
Let $X$ be a ball quasi-Banach function space.
Assume that there exists a $p_- \in (0,\infty)$ such that,
for any $p \in (0,p_-)$ and $u \in (1,\infty)$,
there exists a positive constant $C$, depending on both $p$ and $r$,
such that, for any $\{f_k\}_{k=1}^\infty\subset\mathscr M(\rn)$,
\begin{equation*}
\left\| \left\{\sum_{k=1}^\infty\lf[\cm\lf(f_k\r)\r]^u\right\}^
{\frac{1}{u}} \right\|_{X^{\frac{1}{p}}}
\leq C\left\|\left\{\sum_{k=1}^{\infty}|f_k|^u\right\}^
{\frac{1}{u}}\right\|_{X^\frac{1}{p}}.
\end{equation*}
\end{assumption}

In what follows, for any given $p_- \in (0,\infty)$, let
\begin{equation} \label{underp}
\underline{p}:=\min \{p_-,1\}.
\end{equation}

\begin{assumption}\label{HMLonAsso}
Let $p_- \in (0,\infty)$ and $X$ be a
ball quasi-Banach function space.
Assume that there exists a $\theta_0\in(0,\underline{p})$,
with $\underline{p}$ in \eqref{underp},
and a $p_0 \in (\theta_0,\infty)$ such that
$X^{{1}/{\theta_0}}$ is a ball Banach function space and,
for any $f\in(X^{{1}/{\theta_0}})'$,
\begin{equation*}
\left\|\cm^{(({p_0}/{\theta_0})')}(f)\right\|_{(X^{{1}/{\theta_0}})'}
\leq C\|f\|_{(X^{{1}/{\theta_0}})'},
\end{equation*}
where $C$ is a positive constant, independent of $f$, and
$\frac{1}{p_0/\theta_0}+\frac{1}{(p_0/\theta_0)'}=1$.
\end{assumption}
Next, recall that a {\it Schwartz function} is a function
$\varphi \in C^\infty(\rn)$ satisfying that, for any
$k\in\zz_+$ and any multi-index $\alpha\in\zz_+^n$,
\begin{align}\label{6.11.x1}
\|\varphi\|_{\alpha,k}
&:=\sup_{x\in\rn}[\rho(x)]^k|\partial^\alpha\varphi(x)|<\infty.
\end{align}
Denote by $ \cs(\rn) $ the {\it set of all Schwartz functions},
equipped with the topology determined by
$\{\|\cdot\|_{\alpha,k} \}_{\alpha\in\zz_+^n,k \in \zz_+}$.
Then $\cs'(\rn)$ is defined to be the {\it dual space} of $\cs(\rn)$,
equipped with the weak-$\ast$ topology. For any $N \in \zz_+$,
let
\begin{equation*}
\cs_N(\rn):=\lf\{\varphi\in\cs(\rn):\
\|\varphi\|_{\alpha,k}\leq1,|\alpha|\leq N,k\leq N\r\},
\end{equation*}
equivalently,
\begin{align*}
&\varphi\in\cs_N(\rn)\ \Longleftrightarrow\ \|\varphi\|_{\cs_N(\rn)}
:=\sup_{|\alpha|\leq N}\sup_{x\in\rn}\max\{1,[\rho(x)]^N\}
|\partial^\alpha\varphi(x)|\leq 1.
\end{align*}

Now, we recall the definitions
of the anisotropic non-tangential maximal function and the
anisotropic
non-tangential grand
maximal function from \cite[Definition 2.15]{wyy22}.
In what follows, for any $\varphi\in\cs(\rn)$ and
$k \in \zz$, let $\varphi_k(\cdot):=b^{-k}\varphi(A^{-k}\cdot)$.
\begin{definition}
Let $\varphi\in\cs(\rn)$ and $f\in\cs'(\rn)$.
The {\it anisotropic non-tangential maximal function} $M_\varphi (f)$,
with respect to $\varphi$, is defined by setting,
for any $x\in\rn$,
\begin{equation*}
M_\varphi(f)(x):=\sup_{k\in\zz,\,y\in x+B_k}\lf|f\ast\varphi_k(y)\r|.
\end{equation*}
Moreover, for any given $ N\in\nn$, the {\it anisotropic
non-tangential grand
maximal function} $M_N(f)$ is defined by setting, for any $x\in\rn$,
\begin{equation}\label{M_N}
M_N(f)(x):=\sup_{\varphi\in\cs_N(\rn)}M_\varphi(f)(x).
\end{equation}
\end{definition}

We present the definition of
$H_{X}^{A}(\rn)$ from \cite{wyy22} as follows. In what follows,
for any $\az\in\rr$, we denote by the \emph{symbol}
$\lfloor\az\rfloor$ the largest integer not greater than $\az$.

\begin{definition}\label{HXA}
Let $A$ be a dilation and $X$ a ball quasi-Banach function space
satisfying both Assumption \ref{F-SVMI} with $p_-\in(0,\infty)$
and Assumption \ref{HMLonAsso} with the same
$p_-$, $\theta_0\in(0,\unp)$, and $p_0\in(\theta_0,\infty)$,
where $\unp$ is the same as in \eqref{underp}. Assume that
$$N\in\nn\cap\lf[\lf\lfloor\lf(\frac{1}{\theta_0}-1\r)
\frac{\ln b}{\ln(\lz_-)}\r\rfloor+2,\infty\r).$$
The {\it anisotropic Hardy space $H_{X,\,N}^A(\rn)$},
associated with both $A$ and $X$, is defined by setting
\begin{equation*}
H_{X,\,N}^A(\rn):=\lf\{f\in\cs'(\rn):\ \|M_N(f)\|_X<\infty\r\},
\end{equation*}
where $M_N(f)$ is the same as in \eqref{M_N}. Moreover,
for any $f\in H_{X,\,N}^A(\rn)$, let
$$\|f\|_{H_{X,\,N}^A(\rn)}:=\lf\|M_N(f)\r\|_X.$$
\end{definition}

Let $A$ be a dilation and $X$ the same as in Definition \ref{HXA}.
In the remainder of this article, we always let
\begin{align*}
N_{X,\,A}:=\lf\lfloor\lf(\frac{1}{\theta_0}-1\r)
\frac{\ln b}{\ln(\lz_-)}\r\rfloor+2.
\end{align*}

\begin{remark}\label{HXAre}
\begin{enumerate}
\item[(i)]
As was mentioned in \cite[Remark 2.17(i)]{wyy22},
the space $H_{X,\,N}^A(\rn)$ is independent of the choice
of $N$ as long as $N\in\nn\cap[N_{X,\,A},\infty)$.
Thus, in what follows, when $N\in\nn\cap[N_{X,\,A},\infty)$,
we always write $H_{X,\,N}^A(\rn)$ simply by $H_{X}^A(\rn)$.

\item[(ii)]
If $A:=2\,I_{n\times n}$, then $H_X^A(\rn)$ coincides with
$H_X(\rn)$ which was introduced by Sawano et al. in \cite{shyy17}.
\end{enumerate}
\end{remark}

\section{Fourier Transforms of $H_X^A(\rn)$ \label{s2}}
Let $A$ be a dilation, $X$ a ball quasi-Banach function space
satisfying some mild assumptions, and $f\in H_X^A(\rn)$.
In this section, we aim to study the Fourier transform of $f$.
Recall that, for any $\varphi\in\cs(\rn)$, its {\it Fourier transform},
denoted by $\mathscr{F}(\varphi)$ or $\widehat{\varphi}$,
is defined by setting, for any $\xi\in\rn$,
\begin{align*}
\mathscr{F}(\varphi)(\xi)=\widehat\varphi(\xi):=
\int_{\rn}\varphi(x)e^{-2\pi\imath x\cdot\xi}\,dx,
\end{align*}
here and thereafter, $\imath:=\sqrt{-1}$ and,
for any $x:=(x_1,\ldots,x_n),\xi:=(\xi_1,\ldots,\xi_n)\in\rn$,
$x\cdot\xi :=\sum_{i=1}^{n}x_i \xi_i$.
For any $f\in\cs'(\rn)$, $\widehat f$ is defined by setting,
for any $\varphi\in\mathcal{S}(\rn)$,
$\la\widehat f,\varphi\ra:=\la f,\widehat\varphi\ra$; also,
for any $f\in\mathcal{S}(\rn)$
[resp. $\mathcal{S}'(\rn)$],
$f^{\vee}$ denotes its \emph{inverse Fourier transform}
which is defined by setting,
for any $\xi\in\rn$, $f^{\vee}(\xi):=\widehat{f}(-\xi)$
[resp., for any $\varphi\in\mathcal{S}(\rn)$,
$\la f^{\vee},\varphi\ra:=\la f,\varphi^{\vee}\ra$].

Now, we present the main result of this section as follows.

\begin{theorem}\label{s2t1}
Let $A$ be a dilation and $X$ a ball quasi-Banach function space
satisfying both Assumption \ref{F-SVMI} with $p_-\in(0,\infty)$
and Assumption \ref{HMLonAsso} with the same
$p_-$, $\theta_0\in(0,\unp)$, and $p_0\in(\theta_0,\infty)$,
where $\unp$ is the same as in \eqref{underp}.
Further assume that there exists a $q_0\in[\theta_0,1]$ such that:
\begin{enumerate}
\item [{\rm(i)}]
for any non-negative measurable functions $\{f_k\}_{k=1}^\infty$,
\begin{align}\label{6.9.x1}
\sum_{k=1}^{{\infty}}\|f_k\|_{X^{\frac{1}{q_0}}}
&\ls\left\|\sum_{k=1}^{{\infty}}f_k\right\|_{X^{\frac{1}{q_0}}},
\end{align}
where the implicit positive constant is independent of $\{f_k\}_{k=1}^\infty$;

\item [{\rm(ii)}]
for any $B\in\CB$ with $\CB$ in \eqref{ball-B},
\begin{equation}\label{one_B_X}
\|\one_B\|_{X}\gs\min\lf\{|B|^{\frac{1}{q_0}},|B|^{\frac{1}{\theta_0}}\r\},
\end{equation}
where the implicit positive constant is independent of $B$.
\end{enumerate}
Then, for any $f\in H_X^A(\rn)$, there exists a
continuous function $F$ on $\rn$ such that
\begin{align}\label{6.10.x3}
\widehat{f}=F\quad\mbox{in}\quad\cs'(\rn)
\end{align}
and there exists a positive constant $C$, depending only on $A$ and $X$,
such that, for any $x\in\rn$,
\begin{align}\label{s2e1}
|F(x)|\le C\|f\|_{H_X^A(\rn)}\max\lf\{\lf[\rho_\ast(x)\r]^{\frac1{q_0}-1},
\lf[\rho_\ast(x)\r]^{\frac1{\theta_0}-1}\r\},
\end{align}
here and thereafter, $\rho_*$ is defined as in Definition \ref{def-shqn}
with $A$ replaced by its transposed matrix $A^*$.
\end{theorem}

\begin{remark}\label{s2re1}
\begin{enumerate}
\item [{\rm(i)}]
If $A:=2\,I_{n\times n}$, then Theorem \ref{s2t1}
was obtained in \cite[Theorem 2.1]{hcy22}.

\item [{\rm(ii)}]
For any given measurable set $E\st\rn$ and any given $p\in(0,\fz)$,
the \emph{Lebesgue space} $L^p(E)$ is defined by setting,
\begin{align}\label{6.11.y1}
L^p(E)&:=\left\{f\ \mbox{is measurable on}\ E:\
\|f\|_{L^p(E)}:=\left[\int_{E}|f(x)|^p\,dx\right]^{1/p}
<\infty\right\}.
\end{align}
Let $A$ be a dilation, $p\in(0,1)$, and
$$N\in\nn\cap\lf[\lf\lfloor\lf(\frac{1}{p}-1\r)
\frac{\ln b}{\ln(\lz_-)}\r\rfloor+2,\infty\r).$$
Then, by \cite[Remarks 2.7(i) and 4.21(i)]{yhyy},
we conclude that $L^p(\rn)$ satisfies all the
assumptions of Definition \ref{HXA} with $X:=L^p(\rn)$,
$p_{-}\in(0,p]$, $\tz_0\in(0,p_{-})$, and $p_0\in(p,\infty)$.
Moreover, choose $q_0\in(p,1]$. Then it follows from \eqref{6.11.y1}
that, for any non-negative measurable functions $\{f_k\}_{k=1}^\infty$
and any $B\in\CB$,
$$\sum_{k=1}^{{\infty}}\|f_k\|_{L^{\frac{p}{q_0}}(\rn)}
\le\left\|\sum_{k=1}^{{\infty}}f_k\right\|_{L^{\frac{p}{q_0}}(\rn)}$$
and
$$\|\one_B\|_{L^p(\rn)}=|B|^{\frac{1}{p}}
>\min\lf\{|B|^{\frac{1}{q_0}},|B|^{\frac{1}{\tz_0}}\r\}.$$
Thus, $L^p(\rn)$ satisfies all the assumptions
of Theorem \ref{s2t1} with $X:=L^p(\rn)$.
In this case, Theorem \ref{s2t1} was obtained in \cite[Theorem 1]{bw13}.

\item [{\rm(iii)}]
Recall that $\cp(\rn)$ is defined to be the
{\it set of all the measurable functions} $p(\cdot)$ on $\rn$ satisfying
\begin{align*}
0<\widetilde{p_-}:=\mathop\mathrm{ess\,inf}_{x\in\rn}p(x)\le
\mathop\mathrm{ess\,sup}_{x\in\rn}p(x)=:\widetilde{p_+}<\fz.
\end{align*}
For any $p(\cdot)\in \cp(\rn)$, the \emph{variable Lebesgue space}
$L^{p(\cdot)}(\rn)$ is defined to be the set of all the measurable
functions $f$ on $\rn$ such that
\begin{equation*}
\int_{\rn}|f(x)|^{p(x)}\,dx<\infty,
\end{equation*}
equipped with the \emph{quasi-norm}
$\|f\|_{L^{p(\cdot)}(\rn)}$ defined by setting
\begin{equation}\label{6.8.x1}
\|f\|_{L^{p(\cdot)}(\rn)}:=
\inf\lf\{\lambda\in(0,\infty):\  \int_{\rn}\lf[\frac{|f(x)|}
{\lambda}\r]^{p(x)}\,dx\leq 1\r\}.
\end{equation}
Denote by $C^{\log}(\rn)$ the set of all the functions $p(\cdot)\in\cp(\rn)$
satisfying the {\it globally log-H\"older continuous condition}, that is,
there exist $C_{\log}(p),C_\infty\in(0,\infty)$ and $p_\infty\in\rr$
such that, for any $x,y\in\rn$,
\begin{equation*}
|p(x)-p(y)|\leq\frac{C_{\log}(p)}{\log {(e+1/|x-y|)}}
\end{equation*}
and
\begin{equation*}
|p(x)-p_\infty|\leq\frac{C_\infty}{\log(e+|x|)}.
\end{equation*}
Let $A$ be a dilation and $p(\cdot)\in C^{\log}(\rn)$ satisfy
$0<\widetilde{p_-}\leq\widetilde{p_+}<1$ and
$$N\in\nn\cap\lf[\lf\lfloor\lf(\frac{1}{\widetilde{p_-}}-1\r)
\frac{\ln b}{\ln(\lz_-)}\r\rfloor+2,\infty\r).$$
Then, by \cite[Remarks 2.7(iv) and 4.21(v)]{yhyy},
we conclude that $L^{p(\cdot)}(\rn)$ satisfies all the
assumptions of Definition \ref{HXA} with $X:=L^{p(\cdot)}(\rn)$,
$p_{-}\in(0,\widetilde{p_-}]$, $\tz_0\in(0,p_{-})$,
and $p_0\in(\widetilde{p_+},\infty)$. Moreover,
choose $q_0\in(\widetilde{p_+},1]$. On the one hand,
from \cite[Remark 2.1(iv)]{yyyz16}, we deduce that,
for any non-negative measurable functions $\{f_k\}_{k=1}^\infty$,
$$\sum_{k=1}^{{\infty}}\|f_k\|_{L^{\frac{p(\cdot)}{q_0}}(\rn)}
\le\left\|\sum_{k=1}^{{\infty}}f_k\right\|
_{L^{\frac{p(\cdot)}{q_0}}(\rn)}.$$
On the other hand, by \eqref{6.8.x1}, we find that, for any $B\in\CB$,
$$\|\one_B\|_{L^{p(\cdot)}(\rn)}
\gs\min\lf\{|B|^{\frac{1}{\widetilde{p_+}}},|B|^{\frac{1}{\widetilde{p_-}}}\r\}
>\min\lf\{|B|^{\frac{1}{q_0}},|B|^{\frac{1}{\tz_0}}\r\}.$$
Thus, $L^{p(\cdot)}(\rn)$ satisfies all the assumptions
of Theorem \ref{s2t1} with $X:=L^{p(\cdot)}(\rn)$.
In this case, Theorem \ref{s2t1} was obtained in \cite[Theorem 1]{liu22}.

\item [{\rm(iv)}]
Let $\vec{p}:=(p_1,\ldots,p_n)\in(0,\fz]^n$.
Recall that the \emph{mixed-norm Lebesgue space $L^{\vec{p}}(\rn)$}
is defined to be the set of all the measurable
functions $f$ on $\rn$ such that
\begin{align}\label{6.8.x2}
\|f\|_{L^{\vec{p}}(\rn)}
:=\lf\{\int_{\mathbb{R}}\cdots
\lf[\int_{\mathbb{R}}
\lf\{\int_{\mathbb{R}}|f(x_1,\ldots,x_n)|^{p_1}\,dx_1\r\}
^{\frac{p_2}{p_1}}\,dx_2\r]^{\frac{p_3}{p_2}}\cdots\,dx_n\r\}
^{\frac{1}{p_n}}
\end{align}
is finite with the usual modifications made
when $p_i=\fz$ for some $i\in\{1,\ldots,n\}$.
Let $\vec{p}\in(0,1)^n$, $\widehat{p_-}:=\min\{p_1,...,p_n\}$,
$\widehat{p_+}:=\max\{p_1,...,p_n\}$, and
$$N\in\nn\cap\lf[\lf\lfloor\lf(\frac{1}{\widehat{p_-}}-1\r)
\frac{\ln b}{\ln(\lz_-)}\r\rfloor+2,\infty\r).$$
Then, by both \cite[p.\,2047]{zwyy} and \cite[Lemmas 7.22 and 7.26]{zwyy},
we conclude that $L^{\vec{p}}(\rn)$ satisfies all the
assumptions of Definition \ref{HXA} with $X:=L^{\vec{p}}(\rn)$,
$p_{-}\in(0,\widehat{p_-}]$, $\tz_0\in(0,p_-)$,
and $p_0\in(\widehat{p_+},\infty)$. Moreover,
choose $q_0\in(\widehat{p_+},1]$.
From \eqref{6.8.x2} and \cite[(9)]{llz22}, we deduce that,
for any non-negative measurable functions $\{f_k\}_{k=1}^\infty$
and any $B\in\CB$,
$$\sum_{k=1}^{{\infty}}\|f_k\|_{L^{\frac{\vec{p}}{q_0}}(\rn)}
\le C\left\|\sum_{k=1}^{{\infty}}f_k\right\|
_{L^{\frac{\vec{p}}{q_0}}(\rn)}$$
and
$$\|\one_B\|_{L^{\vec{p}}(\rn)}
\gs\min\lf\{|B|^{\frac{1}{\widehat{p_+}}},|B|^{\frac{1}{\widehat{p_-}}}\r\}
=\min\lf\{|B|^{\frac{1}{q_0}},|B|^{\frac{1}{\tz_0}}\r\}.$$
Thus, $L^{\vec{p}}(\rn)$ satisfies all the assumptions
of Theorem \ref{s2t1} with $X:=L^{\vec{p}}(\rn)$. In this case,
Theorem \ref{s2t1} was obtained in \cite[Theorem 3.1]{llz22}.

\item [{\rm(v)}]
As was mentioned in \cite[Remark 2.1(ii)]{hcy22},
\eqref{s2e1} implies that the function
$f\in H_X^A(\rn)\cap L^1(\rn)$ has a vanishing moment.
This illustrates the necessity of the vanishing moment
of atoms in some sense.
\end{enumerate}
\end{remark}

To prove Theorem \ref{s2t1}, we need more preparations.
Let $A$ be a dilation. Recall that the dilation operator
$D_A$ is defined by setting, for any $f\in\mathscr M(\rn)$,
$$D_A(f)(\cdot):=f(A\cdot).$$
Then, by an elementary calculation (see also \cite[(3.1)]{bw13}),
we find that, for any $k\in\zz$, $f\in L^1(\rn)$, and $x\in\rn$,
\begin{align}\label{6.10.x1}
\widehat{f}(x)&=b^k\lf(D_{A^*}^k\lf(\mathscr{F}\lf(D_{A}^kf\r)\r)\r)(x).
\end{align}

Next, we recall the definitions of anisotropic
$(X,q,d)$-atoms and anisotropic atomic Hardy spaces
$H_{X,{\rm atom}}^{A,q,d}(\rn)$ which were first introduced in
\cite[Definitions 4.1 and 4.2]{wyy22}.

\begin{definition}\label{atom}
Let $A$, $X$, $\tz_0$, and $p_0$ be the same as in Definition \ref{HXA}.
Further assume that $q\in(\max\{p_0,1\},\infty]$ and
\begin{equation}\label{def-d}
d\in\lf[\left\lfloor\left(\frac{1}{\theta_0}-1\right)
\frac{\ln b}{\ln(\lambda_-)}\right\rfloor,\fz\r)\cap\zz_+.
\end{equation}
\begin{enumerate}
\item [{\rm(i)}]
An {\it anisotropic $(X,q,d)$-atom} $a$ is a
measurable function on $\rn$ satisfying that
\begin{enumerate}
\item [${\rm(i)}_1$]
$\supp(a):=\{x\in\rn:\ a(x)\neq0\}\subset B$,
where $B\in\CB$ and $\CB$ is the same as in \eqref{ball-B};
\item[${\rm(i)}_2$]
$\|a\|_{L^q(\rn)}\leq |B|^{\frac{1}{q}}\|\one_B\|_X^{-1}$;
\item[${\rm(i)}_3$]
$\int_{\rn} a(x)x^\gamma\,dx=0 $ for any
$\gamma\in\zz_+^n$ with $|\gamma|\leq d$,
here and thereafter, for any $x:=(x_1,\ldots,x_n)\in\rn$
and $\gamma:=\{\gamma_1,\ldots,\gamma_n\}\in\zz_+^n$,
$|\gamma|:=\gamma_1+\cdots+\gamma_n$ and
$x^\gamma:=x_1^{\gamma_1}\cdots x_n^{\gamma_n}$.
\end{enumerate}
\item[{\rm(ii)}]
The {\it anisotropic atomic Hardy space} $\Hatom(\rn)$ is defined
to be the set of all the $f\in\cs'(\rn)$ satisfying that there exists
a sequence
$\{\lambda_j\}_{j\in\nn}\subset\cc$ and a sequence
$\{a_j\}_{j\in\nn}$ of $(X,q,d)$-atoms supported,
respectively, in $\{B^j\}_{j\in\nn}\subset\CB$ such that
\begin{equation*}
f=\sum_{j\in\nn}\lambda_j a_j
\end{equation*}
in $\cs'(\rn)$ and that
\begin{equation*}
\lf\|\left\lbrace\sum_{j\in\nn}
\left[\frac{|\lz_j|\one_{B^j}}{\|\one_{B^j}\|_X}\right]
^{\theta_0}\right\rbrace^{\frac{1}{\theta_0}}\r\|_X<\infty.
\end{equation*}
Moreover, for any $f\in\Hatom(\rn)$, let
\begin{equation*}
\|f\|_{\Hatom(\rn)}:=\inf\left\|\left\lbrace\sum_{j\in\nn}
\left[\frac{|\lz_j|\one_{B^j}}{\|\one_{B^j}\|_X}\right]^
{\theta_0}\right\rbrace^{\frac{1}{\theta_0}}\right\|_X,
\end{equation*}
where the infimum is taken over all the decompositions of $f$ as above.
\end{enumerate}
\end{definition}

The following atomic characterization of $H_X^A(\rn)$,
which was established in \cite[Theorem 4.3]{wyy22},
is needed in the proof of Theorem \ref{s2t1}.

\begin{lemma}\label{s2l0}
Let $A$, $X$, $q$, and $d$ be the same as in Definition \ref{atom}.
Then $H_X^A(\rn)=\Hatom(\rn)$ with equivalent quasi-norms.
\end{lemma}

By an argument similar to that used in proof of \cite[Lemma 4]{bw13},
we immediately obtain the following conclusioan.

\begin{lemma}\label{s2l1}
Let $A$, $X$, $q$, and $d$ be the same as in Definition \ref{atom}.
Assume that $a$ is an anisotropic $(X,q,d)$-atom supported in
$x_0+B_{i_0}$ with some $x_0\in\rn$ and $i_0\in\zz$.
Then there exists a positive constant $C$ such that,
for any $\alpha\in\zz_+^n$ with $|\az|\leq d$ and for
any $x\in\rn$,
\begin{align}\label{5.30.x2}
\lf|\partial^\az\lf(\mathscr{F}\lf(D_{A}^{i_0}a\r)\r)(x)\r|
&\leq C\lf\|\one_{B_{i_0}}\r\|_X^{-1}\min\lf\{1,|x|^{d-|\az|+1}\r\},
\end{align}
where $C$ is also independent of $a$.
\end{lemma}

\begin{proof}
Without loss of generality, we may assume that $a$ is supported in $B_{i_0}$.
Thus, $\supp(D_{A}^{i_0}a)\st B_0$. On the one hand,
by \cite[(1.20)]{duo}, Definition \ref{atom}${\rm (i)}_3$,
the Taylor remainder theorem, the H\"older inequality,
and Definition \ref{atom}${\rm (i)}_2$, we conclude that,
for any $\alpha\in\zz_+^n$ with $|\az|\leq d$ and for any $x\in\rn$,
\begin{align}\label{5.30.x1}
\lf|\partial^\az\lf(\mathscr{F}\lf(D_{A}^{i_0}a\r)\r)(x)\r|
&=\lf|\int_{B_0}(-2\pi\imath \xi)^{\az}\lf(D_{A}^{i_0}a\r)(\xi)
e^{-2\pi\imath x\cdot\xi}\,d\xi\r|\\
&=\lf|\int_{B_0}(-2\pi\imath \xi)^{\az}\lf(D_{A}^{i_0}a\r)(\xi)
\lf[e^{-2\pi\imath x\cdot\xi}-T(\xi)\r]\,d\xi\r|\noz\\
&\ls\int_{B_0}|\xi|^{|\az|}\lf|a\lf({A}^{i_0}\xi\r)\r|
|x|^{d-|\az|+1}|\xi|^{d-|\az|+1}\,d\xi\noz\\
&\ls|x|^{d-|\az|+1}b^{-i_0}\int_{B_{i_0}}|a\lf(\xi\r)|\,d\xi\noz\\
&\le|x|^{d-|\az|+1}\lf\|\one_{B_{i_0}}\r\|_X^{-1},\noz
\end{align}
where $T(\xi)$ is the $(d-|\az|)$th-order
Taylor polynomial of the function
$\xi\rightarrow e^{-2\pi\imath x\cdot\xi}$ at the origin.
On the other hand, from \cite[(1.20)]{duo}, the H\"older inequality,
and Definition \ref{atom}${\rm (i)}_2$, we deduce that,
for any $\alpha\in\zz_+^n$ with $|\az|\leq d$ and for any $x\in\rn$,
\begin{align*}
\lf|\partial^\az\lf(\mathscr{F}\lf(D_{A}^{i_0}a\r)\r)(x)\r|
&=\lf|\int_{B_0}(-2\pi\imath \xi)^{\az}\lf(D_{A}^{i_0}a\r)(\xi)
e^{-2\pi\imath x\cdot\xi}\,d\xi\r|\\
&\ls\int_{B_0}|\xi|^{|\az|}\lf|a\lf({A}^{i_0}\xi\r)\r|\,d\xi
\ls b^{-i_0}\int_{B_{i_0}}|a\lf(\xi\r)|\,d\xi\\
&\le\lf\|\one_{B_{i_0}}\r\|_X^{-1},
\end{align*}
which, combined with \eqref{5.30.x1},
further implies \eqref{5.30.x2} and hence
completes the proof of Lemma \ref{s2l1}.
\end{proof}

Applying Lemma \ref{s2l1}, we obtain the following uniform estimate
for anisotropic $(X,q,d)$-atoms, which plays a key role
in the proof
of Theorem \ref{s2t1}.
\begin{lemma}\label{s2l2}
Let $A$, $X$, $q$, $d$, and $\tz_0$ be the same as in Definition \ref{atom}.
Further assume that $X$ satisfies \eqref{one_B_X} with $q_0\in[\theta_0,1]$.
Then there exists a positive constant $C$ such that,
for any anisotropic $(X,q,d)$-atom $a$ and for any $x\in\rn$,
\begin{align}\label{s2e2}
\lf|\widehat{a}(x)\r|\leq C\max\lf\{\lf[\rho_*(x)\r]^{\frac1{q_0}-1},\,
\lf[\rho_*(x)\r]^{\frac1{\tz_0}-1}\r\},
\end{align}
where $\rho_*$ is the same as in Theorem \ref{s2t1}.
\end{lemma}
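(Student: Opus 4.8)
The plan is to prove \eqref{s2e2} by splitting $\rn$ (via the transposed step quasi-norm $\rho_*$) into the region where $\rho_*(x)$ is small and the region where it is large, and on each region to choose the optimal balance between the trivial size bound for $\widehat a$ coming from $\|a\|_{L^1}$ and the oscillation bound coming from the vanishing moments of $a$. Let $a$ be an anisotropic $(X,q,d)$-atom supported in $x_0+B_{i_0}$. First I would reduce to a dilation-invariant statement: using the identity $\widehat a(x)=b^{i_0}(D_{A^*}^{i_0}\mathscr FD_A^{i_0}a)(x)=b^{i_0}(\mathscr FD_A^{i_0}a)((A^*)^{i_0}x)$ together with the homogeneity $\rho_*((A^*)^{i_0}x)=b^{i_0}\rho_*(x)$, the estimate for $a$ supported in an arbitrary dilated ball follows from the estimate for the rescaled atom $D_A^{i_0}a$, which is (essentially) supported in $B_0=\Delta$. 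Thus it suffices to control $|(\mathscr FD_A^{i_0}a)(x)|$ by Lemma \ref{s2l1} (the $|\az|=0$ case) together with a crude global bound.

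The two basic inputs are: (1) from Lemma \ref{s2l1} with $\az=0$, $|(\mathscr FD_A^{i_0}a)(x)|\le Cb^{-i_0/q}\|a\|_{L^q(\rn)}\min\{1,|x|^{d+1}\}$, which after undoing the rescaling gives, roughly, $|\widehat a(x)|\ls b^{i_0(1-1/q)}\|a\|_{L^q}\min\{1,[\rho_*(x)b^{-i_0}]^{(d+1)/\text{(something)}}\}$ — here one has to be careful translating between the Euclidean $|(A^*)^{i_0}x|$ and $\rho_*$, using the standard equivalences between homogeneous quasi-norms and powers of $\rho_*$ (so a factor like $[\rho_*]^{(d+1)\ln\lambda_-/\ln b}$ appears, matching the exponents flagged in the introduction); and (2) the size/normalization estimates: by H\"older's inequality and atom condition (ii), $\|a\|_{L^1(\rn)}\le|B_{i_0}|^{1-1/q}\|a\|_{L^q}\le |B_{i_0}|\,\|\one_{B_{i_0}}\|_X^{-1}$, so $\|\widehat a\|_{L^\infty}\le\|a\|_{L^1}\le b^{i_0}\|\one_{x_0+B_{i_0}}\|_X^{-1}$. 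Now the crucial step: feed the lower bound \eqref{one_B_X}, $\|\one_B\|_X\gtrsim\min\{|B|^{1/q_0},|B|^{1/p_-}\}$, into the normalization bound to get $\|\widehat a\|_{L^\infty}\ls b^{i_0}\max\{b^{-i_0/q_0},b^{-i_0/p_-}\}=\max\{b^{i_0(1-1/q_0)},b^{i_0(1-1/p_-)}\}$. Combining this uniform-in-$x$ bound with the decay bound from Lemma \ref{s2l1}, one obtains, for each $x$, a bound of the form $\max\{b^{i_0(1-1/q_0)},b^{i_0(1-1/p_-)}\}\cdot\min\{1,(\rho_*(x)b^{-i_0})^{c(d+1)}\}$ for a positive constant $c$; then I would optimize over $i_0\in\zz$ (equivalently, choose $i_0$ so that $b^{i_0}\sim\rho_*(x)$ in the relevant regime), which yields exactly $\max\{[\rho_*(x)]^{1/q_0-1},[\rho_*(x)]^{1/p_--1}\}$ since $1/q_0-1,1/p_--1\le 0$ force the maximum to be the right branch on $\{\rho_*\le1\}$ versus $\{\rho_*\ge1\}$.

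More carefully, the case division is: if $\rho_*(x)\le1$, then on $\{\rho_*(x)b^{-i_0}\le1\}$, i.e. $b^{i_0}\ge\rho_*(x)$, the decay factor is active and gives $(\rho_*(x))^{c(d+1)}b^{-i_0c(d+1)}\cdot b^{i_0(1-1/p_-)}$ which, since $d$ satisfies \eqref{def-d} (hence $c(d+1)\ge 1-1/p_-\ge 1-1/\theta_0$ type condition after translating exponents), is summable/maximized appropriately; if $b^{i_0}<\rho_*(x)\le1$ one uses the $L^\infty$ bound directly, getting $b^{i_0(1-1/p_-)}\le(\rho_*(x))^{1-1/p_-}$ wait — here one must track the sign and pick the dominant term, landing on $\max\{[\rho_*(x)]^{1/q_0-1},[\rho_*(x)]^{1/p_--1}\}$. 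The symmetric bookkeeping handles $\rho_*(x)>1$. \textbf{The main obstacle} I anticipate is precisely this exponent bookkeeping: one must move cleanly between the Euclidean quasi-norm $|(A^*)^k x|$ appearing in Lemma \ref{s2l1} and the step quasi-norm $\rho_*$, keeping track of the factors $\ln\lambda_-/\ln b$ and $\ln\lambda_+/\ln b$, and verify that the admissible range of $d$ in \eqref{def-d} is exactly what makes the decay exponent large enough to absorb the worst growth factor $b^{i_0(1-1/p_-)}$ (equivalently $b^{i_0(1-1/\theta_0)}$-type terms), so that the minimum/maximum combination collapses to the claimed two-branch bound; the actual analytic estimates are routine once this is set up correctly. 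A secondary technical point is that an atom supported in $x_0+B_{i_0}$ rescales under $D_A^{i_0}$ to one supported in $A^{-i_0}x_0+B_0$, not quite $B_0$; but since the Fourier transform only sees the translation $A^{-i_0}x_0$ as a unimodular factor $e^{-2\pi\imath A^{-i_0}x_0\cdot(\cdot)}$, it does not affect absolute values, so Lemma \ref{s2l1}'s conclusion (which is already stated for atoms supported in $x_0+B_{i_0}$) applies verbatim.
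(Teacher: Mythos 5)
Your proposal is correct and follows essentially the same route as the paper: Lemma \ref{s2l1} with $\az=0$ for the oscillation/decay bound, the atom's size condition combined with \eqref{one_B_X} for the normalization $b^{i_0}\max\{b^{-i_0/q_0},b^{-i_0/p_-}\}$, Lemma \ref{s2l3} to pass from $|(A^*)^{i_0}x|$ to $\rho_*$, and a two-case analysis according to whether $\rho_*((A^*)^{i_0}x)=b^{i_0}\rho_*(x)$ is at most $1$ or not. The only slips are cosmetic: the rescaled quasi-norm is $b^{i_0}\rho_*(x)$ rather than $\rho_*(x)b^{-i_0}$ (so the case split is $\rho_*(x)\le b^{-i_0}$ versus $\rho_*(x)>b^{-i_0}$), and one does not optimize over $i_0$ (which is fixed by the atom) but rather checks the bound uniformly in $i_0$, which is exactly what the condition $1-\frac1{p_-}+(d+1)\frac{\ln\lambda_-}{\ln b}>0$ guaranteed by \eqref{def-d} delivers.
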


The proof of Lemma \ref{s2l2} needs the following inequalities
which are just \cite[p.\,11, Lemma 3.2]{Bownik}.

\begin{lemma}\label{s2l3}
Let $A$ be a dilation. Then there exists a positive
constant $C$ such that, for any $x\in\rn$,
\begin{equation*}
\frac{1}{C} [\rho(x)]^{\ln(\lambda_-)/\ln b} \leq \left|x\right|
\leq C[\rho(x)]^{\ln(\lambda_+)/\ln b}\ { when}\ \rho(x)\in(1,\fz)
\end{equation*}
and
\begin{equation*}
\frac{1}{C} [\rho(x)]^{\ln(\lambda_+)/\ln b} \leq \left|x\right|
\leq C[\rho(x)]^{\ln(\lambda_-)/\ln b}\ { when}\ \rho(x)\in[0,1].
\end{equation*}
\end{lemma}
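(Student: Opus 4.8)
The plan is to derive both pairs of inequalities from two ingredients: a two-sided bound for the action of the integer powers $A^{k}$ on $\rn$ by the scalars $\lambda_{-}^{k}$ and $\lambda_{+}^{k}$, and the observation that every level set of $\rho$ becomes, after applying a suitable power of $A^{-1}$, the single fixed annulus $B_{1}\setminus B_{0}$; then one only has to translate $\lambda_{\pm}^{k}$ into a power of $\rho(x)$.

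First I would record the spectral growth bounds. Since $\lambda_{-}<\min\{|\lambda|:\lambda\in\sigma(A)\}$ and $\max\{|\lambda|:\lambda\in\sigma(A)\}<\lambda_{+}$, passing to the Jordan canonical form of $A$ (or invoking Gelfand's formula for the operator norm) shows that the spectral radius of $A$ is smaller than $\lambda_{+}$ and that the spectral radius of $A^{-1}$, which equals $1/\min\{|\lambda|:\lambda\in\sigma(A)\}$, is smaller than $1/\lambda_{-}$; absorbing the polynomial factors coming from the Jordan blocks into a constant, one obtains a $C_{0}\in[1,\infty)$, depending only on $A$, such that $|A^{k}|\le C_{0}\lambda_{+}^{k}$ and $|A^{-k}|\le C_{0}\lambda_{-}^{-k}$ for every $k\in\zz_{+}$, where $|\cdot|$ also denotes the operator norm. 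Combining these with $|y|=|A^{-k}(A^{k}y)|\le|A^{-k}|\,|A^{k}y|$ and $|y|=|A^{k}(A^{-k}y)|\le|A^{k}|\,|A^{-k}y|$ gives, for all $k\in\zz_{+}$ and $y\in\rn$,
$$C_{0}^{-1}\lambda_{-}^{k}|y|\le|A^{k}y|\le C_{0}\lambda_{+}^{k}|y|\quad\text{and}\quad C_{0}^{-1}\lambda_{+}^{-k}|y|\le|A^{-k}y|\le C_{0}\lambda_{-}^{-k}|y|,$$
so that, after re-indexing, $C_{0}^{-1}\lambda_{+}^{k}|y|\le|A^{k}y|\le C_{0}\lambda_{-}^{k}|y|$ whenever $k\in\zz$ with $k\le 0$.

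Next I would localize $\rho$. If $x\in\rn$ satisfies $\rho(x)=b^{k}$ for some $k\in\zz$, then, by Definition \ref{def-shqn}, $x\in B_{k+1}\setminus B_{k}=A^{k+1}\Delta\setminus A^{k}\Delta$, whence $A^{-k}x\in A\Delta\setminus\Delta=B_{1}\setminus B_{0}$. Since $B_{1}=A\Delta$ is bounded and $\Delta=B_{0}$ is an open symmetric ellipsoid, $\Delta$ contains a Euclidean ball $B(\mathbf 0,\delta)$ with some $\delta\in(0,\infty)$; hence there are constants $0<\delta\le c_{0}<\infty$, independent of $x$ and $k$, with $\delta\le|A^{-k}x|\le c_{0}$.

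Finally I would combine the two. Writing $x=A^{k}(A^{-k}x)$ and applying the displayed estimates with $y=A^{-k}x$, together with the identity $\lambda_{\pm}^{k}=b^{k\ln\lambda_{\pm}/\ln b}=[\rho(x)]^{\ln\lambda_{\pm}/\ln b}$ valid whenever $\rho(x)=b^{k}$: when $\rho(x)\in(1,\infty)$ we have $b^{k}>1$, hence $k\ge1$, so $C_{0}^{-1}\delta\,[\rho(x)]^{\ln\lambda_{-}/\ln b}\le|x|\le C_{0}c_{0}\,[\rho(x)]^{\ln\lambda_{+}/\ln b}$, which is the first pair of inequalities; when $\rho(x)\in(0,1]$ we have $k\le0$, so the $k\le0$ case gives $C_{0}^{-1}\delta\,[\rho(x)]^{\ln\lambda_{+}/\ln b}\le|x|\le C_{0}c_{0}\,[\rho(x)]^{\ln\lambda_{-}/\ln b}$, which is the second pair, the borderline $\rho(x)=1$ (i.e. $k=0$) and $\rho(x)=0$ (i.e. $x=\mathbf 0$) being immediate. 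Choosing $C:=\max\{C_{0}/\delta,\,C_{0}c_{0}\}$ finishes the proof. The only genuinely non-elementary input is the spectral growth bound for $|A^{\pm k}|$; the rest is the change of variables $y=A^{-k}x$ and bookkeeping with the fixed annulus $B_{1}\setminus B_{0}$.
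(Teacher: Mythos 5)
Your proof is correct. The paper itself gives no argument for this lemma---it simply quotes it from Bownik's memoir (\cite[p.\,11, Lemma 3.2]{Bownik})---and your argument is essentially the standard proof behind that citation: the spectral (Jordan-form/Gelfand) growth bounds $C_0^{-1}\lambda_-^k|y|\le|A^k y|\le C_0\lambda_+^k|y|$ for $k\ge0$ (and their counterparts for $k\le0$), combined with the reduction of each level set $B_{k+1}\setminus B_k=A^k(B_1\setminus B_0)$ to the fixed annulus $B_1\setminus B_0$, so no gap remains.
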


Now, we give the proof of Lemma \ref{s2l2}.

\begin{proof}[Proof of Lemma \ref{s2l2}]
Let $a$ be an anisotropic $(X,q,d)$-atom supported in
$x_0+B_{i_0}$ with some $x_0\in\rn$ and $i_0\in\zz$.
Without loss of generality, we may assume $x_0=\mathbf{0}$.
By \eqref{6.10.x1}, Lemma \ref{s2l1} with $\az=(\overbrace{0,\ldots,0}^{n\ \rm times})$,
and
\eqref{one_B_X}, we conclude that,
for any $x\in\rn$,
\begin{align}\label{s2e3}
\lf|\widehat{a}(x)\r|
&=\lf|b^{i_0}\lf(D_{A^*}^{i_0}\lf(\mathscr{F}\lf(D_{A}^{i_0}a\r)\r)\r)(x)\r|
=\lf|b^{i_0}\mathscr{F}\lf(D_{A}^{i_0}a\r)\lf((A^*)^{i_0}x\r)\r|\\
&\ls b^{i_0}\lf\|\mathbf{1}_{B_{i_0}}\r\|_{X}^{-1}
\min\lf\{1,\lf|(A^*)^{i_0}x\r|^{d+1}\r\}\noz\\
&\ls b^{i_0}\max\lf\{b^{-\frac{i_0}{q_0}},\,b^{-\frac{i_0}{\tz_0}}\r\}
\min\lf\{1,\lf|(A^*)^{i_0}x\r|^{d+1}\r\}\noz.
\end{align}
Next, we prove \eqref{s2e2} by considering two cases:
$\rho_*(x)\leq b^{-i_0}$ and $\rho_*(x)>b^{-i_0}$.

Case 1) $\rho_{*}(x)\leq b^{-i_0}$. In this case, note that
\begin{align}\label{6.10.x2}
\rho_*\lf((A^*)^{i_0}x\r)&=b^{i_0}\rho_*(x)\leq 1.
\end{align}
Moreover, by \eqref{def-d}, we find that
$$1-\frac1{q_0}+(d+1)\frac{\ln(\lambda_-)}{\ln b}
\geq1-\frac1{\tz_0}+(d+1)\frac{\ln(\lambda_-)}{\ln b}>0.$$
From this, \eqref{s2e3}, \eqref{6.10.x2}, and Lemma \ref{s2l3},
we infer that, for any $x\in\rn$ satisfying $\rho_*(x)\leq b^{-i_0}$,
\begin{align}\label{s2e4}
\lf|\widehat{a}(x)\r|
&\ls b^{i_0}\max\lf\{b^{-\frac{i_0}{q_0}},\,b^{-\frac{i_0}{\tz_0}}\r\}
\lf[\rho_*\lf((A^*)^{i_0}x\r)\r]^{(d+1)\frac{\ln(\lambda_-)}{\ln b}}\\
&= \max\lf\{b^{i_0[1-\frac1{q_0}+(d+1)\frac{\ln(\lambda_-)}{\ln b}]},\,
b^{i_0[1-\frac1{\tz_0}+(d+1)\frac{\ln(\lambda_-)}{\ln b}]}\r\}
\lf[\rho_*(x)\r]^{(d+1)\frac{\ln(\lambda_-)}{\ln b}}\noz\\
&=\max\lf\{\lf[\rho_*(x)\r]^{\frac1{q_0}-1},\,
\lf[\rho_*(x)\r]^{\frac1{\tz_0}-1}\r\}.\noz
\end{align}
This shows \eqref{s2e2} in Case 1).

Case 2) $\rho_*(x)>b^{-i_0}$. In this case, note that
\begin{align*}
\rho_*\lf((A^*)^{i_0}x\r)&=b^{i_0}\rho_*(x)>1.
\end{align*}
Using this, \eqref{s2e3}, Lemma \ref{s2l3}, and the fact that
$$\frac1{\tz_0}-1\geq\frac1{q_0}-1\geq0,$$
we conclude that, for any $x\in\rn$ satisfying $\rho_*(x)>b^{-i_0}$,
\begin{align*}
\lf|\widehat{a}(x)\r|
&\ls b^{i_0}\max\lf\{b^{-\frac{i_0}{q_0}},\,b^{-\frac{i_0}{\tz_0}}\r\}
=\max\lf\{b^{-i_0(\frac1{q_0}-1)},\,b^{-i_0(\frac1{\tz_0}-1)}\r\}\\
&\leq\max\lf\{\lf[\rho_*(x)\r]^{\frac1{q_0}-1},\,\lf[\rho_*(x)\r]^{\frac1{\tz_0}-1}\r\},
\end{align*}
which, combined with \eqref{s2e4}, then
completes the proof of \eqref{s2e2} and hence Lemma \ref{s2l2}.
\end{proof}

The following inequality is basic and used throughout this article.

\begin{lemma}\label{basicine}
Let $\{a_j\}_{j\in\nn}\subset[0,\infty)$. If $\alpha\in(0,1]$, then
\begin{equation*}
\lf(\sum_{j\in\nn}a_j\r)^\alpha\leq\sum_{j\in\nn}a_j^\alpha.
\end{equation*}
\end{lemma}

The following conclusion is also used in the proof of Theorem \ref{s2t1}.

\begin{lemma}\label{s2l4}
Let $A$, $X$, and $\tz_0$ be the same as in Definition \ref{atom}.
Further assume that $X$ satisfies \eqref{6.9.x1} with $q_0\in[\theta_0,1]$.
Then there exists a positive constant $C$ such that,
for any $\{\lz_i\}_{i\in\nn}\subset\mathbb{C}$ and
$\{B^{(i)}\}_{i\in\nn}\subset\CB$,
$$\sum_{i\in\nn}|\lz_i|\le C\lf\|\lf\{\sum_{i\in\nn}
\lf[\frac{|\lz_i|\mathbf{1}_{B^{(i)}}}{\|\mathbf{1}_{B^{(i)}}\|_{X}}\r]^
{\tz_0}\r\}^{\frac{1}{\tz_0}}\r\|_{X}.$$
\end{lemma}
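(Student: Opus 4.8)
The plan is to combine three elementary ingredients: the nesting of the discrete $\ell^{r}$-quasi-norms in $r\in(0,\infty)$, the lattice property of $X$ [Definition \ref{BQBFS}(ii)], and the $q_0$-concavity of $X$ assumed in Theorem \ref{s2t1}. I would first record the exponent comparison
$$\tz_0<\unp=\min\{p_-,1\}\le p_-\le q_0\le1,$$
where $\unp$ is as in \eqref{underp}, $\tz_0$ is as in Assumption \ref{HMLonAsso}, and $q_0\in[p_-,1]$ is as in Theorem \ref{s2t1}; this lets me move freely among the exponents $\tz_0$, $q_0$, and $1$. For each $i\in\nn$, put $f_i:=|\lz_i|\mathbf{1}_{B^{(i)}}/\|\mathbf{1}_{B^{(i)}}\|_X$, which is well defined since $\|\mathbf{1}_{B^{(i)}}\|_X\in(0,\infty)$ by Definition \ref{BQBFS}(iv) and Remark \ref{s1r1}(i); by the homogeneity of the quasi-norm, $\|f_i\|_X=|\lz_i|$.

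Since $\tz_0\le q_0$, the monotonicity $\ell^{\tz_0}\hookrightarrow\ell^{q_0}$ gives, pointwise on $\rn$,
$$\lf\{\sum_{i\in\nn}|f_i|^{q_0}\r\}^{1/q_0}\le\lf\{\sum_{i\in\nn}|f_i|^{\tz_0}\r\}^{1/\tz_0},$$
so, by the lattice property [Definition \ref{BQBFS}(ii)],
$$\lf\|\lf\{\sum_{i\in\nn}|f_i|^{q_0}\r\}^{1/q_0}\r\|_X\le\lf\|\lf\{\sum_{i\in\nn}|f_i|^{\tz_0}\r\}^{1/\tz_0}\r\|_X,$$
and the right-hand side here is, by the definition of $f_i$, exactly the quantity on the right-hand side of the asserted inequality. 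Next I would apply the $q_0$-concavity of $X$ — which holds for all non-negative measurable functions, hence for the non-negative $f_i$ — to obtain a positive constant $C$, independent of $\{\lz_i\}_{i\in\nn}$ and $\{B^{(i)}\}_{i\in\nn}$, with
$$\lf\{\sum_{i\in\nn}\|f_i\|_X^{q_0}\r\}^{1/q_0}\le C\lf\|\lf\{\sum_{i\in\nn}|f_i|^{q_0}\r\}^{1/q_0}\r\|_X.$$
Combining $\|f_i\|_X=|\lz_i|$ with the elementary bound $\sum_{i\in\nn}|\lz_i|\le(\sum_{i\in\nn}|\lz_i|^{q_0})^{1/q_0}$ (valid because $q_0\le1$) and the two previous displays then yields the lemma.

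There is no genuine obstacle in this argument; the only points that need care are that the $q_0$-concavity hypothesis must be invoked in exactly the form available in Theorem \ref{s2t1} — namely for arbitrary non-negative measurable functions, which is what makes it applicable to the characteristic-function combinations $f_i$ — and that the exponent $\tz_0$ appearing in the atomic-type quasi-norm satisfies $\tz_0\le q_0$, which legitimizes the $\ell^{\tz_0}\hookrightarrow\ell^{q_0}$ step. Everything else reduces to the lattice property of $X$ and the monotonicity of the $\ell^{r}$-quasi-norms in $r$.
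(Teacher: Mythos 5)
Your proposal is correct and follows essentially the same route as the paper: the bound $\sum_i|\lz_i|\le(\sum_i|\lz_i|^{q_0})^{1/q_0}$ from $q_0\le1$, the identification $|\lz_i|=\|f_i\|_X$, the $q_0$-concavity of $X$ (equivalently, concavity of $X^{1/q_0}$ applied to $|f_i|^{q_0}$), and finally the pointwise embedding $\ell^{\tz_0}\hookrightarrow\ell^{q_0}$ together with the lattice property of $X$. The only difference is the order of presentation, which is immaterial.
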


\begin{proof}
Indeed, by Lemma \ref{basicine}, Definition \ref{Debf},
\eqref{6.9.x1}, and Definition \ref{BQBFS}(ii), we find that,
for any $\{\lz_i\}_{i\in\nn}\subset\mathbb{C}$ and
$\{B^{(i)}\}_{i\in\nn}\subset\CB$,
\begin{align*}
\sum_{i=1}^{\fz}|\lz_i|
&\le \lf(\sum_{i=1}^{\fz}|\lz_i|^{q_0}\r)^{\frac{1}{q_0}}
=\lf\{\sum_{i=1}^{\fz}
\lf\|\frac{|\lz_i|\mathbf{1}_{B^{(i)}}}
{\|\mathbf{1}_{B^{(i)}}\|_{X}}\r\|_{X}^
{q_0}\r\}^{\frac{1}{q_0}}\\
&=\lf\{\sum_{i=1}^{\fz}
\lf\|\frac{|\lz_i|^{q_0}\mathbf{1}_{B^{(i)}}}{\|\mathbf{1}_{B^{(i)}}\|
_{X}^{q_0}}\r\|_{X^{\frac{1}{q_0}}}\r\}^{\frac{1}{q_0}}
\lesssim \lf\|\sum_{i=1}^{\fz}
\lf[\frac{|\lz_i|\mathbf{1}_{B^{(i)}}}{\|\mathbf{1}_{B^{(i)}}\|_{X}}\r]^
{q_0}\r\|_{X^{\frac{1}{q_0}}}^{\frac{1}{q_0}}\\
&=\lf\|\lf\{\sum_{i=1}^{\fz}\lf[
\frac{|\lz_i|\mathbf{1}_{B^{(i)}}}{\|\mathbf{1}_{B^{(i)}}\|_{X}}\r]^
{q_0}\r\}^{\frac{1}{q_0}}\r\|_{X}
\leq \lf\|\lf\{\sum_{i=1}^{\fz}\lf[
\frac{|\lz_i|\mathbf{1}_{B^{(i)}}}{\|\mathbf{1}_{B^{(i)}}\|_{X}}\r]^
{\tz_0}\r\}^{\frac{1}{\tz_0}}\r\|_{X}.
\end{align*}
This finishes the proof of Lemma \ref{s2l4}.
\end{proof}

Next, we show Theorem \ref{s2t1}.

\begin{proof}[Proof of Theorem \ref{s2t1}]
Let $q$ and $d$ be the same as in Definition \ref{atom}.
Without loss of generality, we may assume that $\|f\|_{H_X^A(\rn)}>0$.
Then, by Lemma \ref{s2l0} and Definition \ref{atom}(ii),
we find that there exists a sequence $\{\lz_i\}_{i\in\nn}\subset\mathbb{C}$
and a sequence $\{a_i\}_{i\in\nn}$ of anisotropic $(X,q,d)$-atoms supported,
respectively, in $\{B^{(i)}\}_{i\in\nn}\subset\CB$ such that
\begin{align}\label{s2e5}
f=\sum_{i\in\nn}\lz_ia_i
\ \mathrm{in} \ \cs'(\rn)
\end{align}
and
\begin{align}\label{s2e6}
\|f\|_{H_X^A(\rn)}\sim\lf\|\lf\{\sum_{i\in\nn}
\lf[\frac{|\lz_i|\mathbf{1}_{B^{(i)}}}{\|\mathbf{1}_{B^{(i)}}\|_{X}}\r]^
{\tz_0}\r\}^{1/\tz_0}\r\|_{X}.
\end{align}

First, we try to find the desired function $F$.
Note that a function $g\in L^1(\rn)$ implies that
$\widehat{g}$ is well defined in $\rn$
(see, for instance, \cite[(1.11)]{duo}),
so does $\widehat{a_i}$ for any $i\in\nn$.
Moreover, from Lemmas \ref{s2l2} and \ref{s2l4}
and from \eqref{s2e6}, it follows that, for any $x\in\rn$,
\begin{align}\label{s2e8}
\sum_{i\in\nn}|\lz_i||\widehat{a_i}(x)|
&\ls\sum_{i\in\nn}|\lz_i|
\max\lf\{\lf[\rho_*(x)\r]^{\frac1{q_0}-1},\,
\lf[\rho_*(x)\r]^{\frac1{\tz_0}-1}\r\}\\
&\ls\|f\|_{H_X^A(\rn)}
\max\lf\{\lf[\rho_*(x)\r]^{\frac1{q_0}-1},\,
\lf[\rho_*(x)\r]^{\frac1{\tz_0}-1}\r\}<\fz.\noz
\end{align}
Therefore, the function
\begin{align}\label{s2e9}
F(\cdot):=\sum_{i\in\nn}\lz_i\widehat{a_i}(\cdot)
\end{align}
is well defined pointwisely on $\rn$ and, for any $x\in\rn$,
\begin{align*}
|F(x)|\ls\|f\|_{H_X^A(\rn)}
\max\lf\{\lf[\rho_*(x)\r]^{\frac1{q_0}-1},\,
\lf[\rho_*(x)\r]^{\frac1{\tz_0}-1}\r\},
\end{align*}
which completes the proof of \eqref{s2e1}.

Second, we show the continuity of $F$ on $\rn$.
If we can prove that $F$ is continuous on any compact subset of $\rn$,
then the continuity of $F$ on $\rn$ is obvious.
Let $E\subset\rn$ be any given compact set.
Then there exists a positive constant $K$,
depending only on $A$ and $E$,
such that $\rho_*(x)\le K$ holds true for any $x\in E$.
By this and \eqref{s2e8}, we conclude that, for any $x\in E$,
\begin{align*}
\sum_{i\in\nn}|\lz_i||\widehat{a_i}(x)|
&\ls\max\lf\{K^{\frac1{q_0}-1},\,K^{\frac1{\tz_0}-1}\r\}
\|f\|_{H_X^A(\rn)}<\fz.
\end{align*}
Thus, the summation $\sum_{i\in\nn}\lz_i\widehat{a_i}(\cdot)$
converges uniformly on $E$. This, together with the fact that
$\widehat{a_i}$ is continuous for any $i\in\nn$, further
implies that $F$ is also continuous on $E$ and hence on $\rn$.

Finally, we show \eqref{6.10.x3}.
By \eqref{s2e5} and the continuity of the Fourier transform in $\cs'(\rn)$
(see, for instance, \cite[Theorem 1.17]{duo}), we obtain
\begin{align*}
\widehat{f}=\sum_{i\in\nn}\lz_i\widehat{a_i}
\ \mathrm{in}\ \cs'(\rn).
\end{align*}
Thus, to prove \eqref{6.10.x3}, we only need to show that
\begin{align}\label{s2e10}
F=\sum_{i\in\nn}\lz_i\widehat{a_i}
\ \mbox{in}\ \cs'(\rn).
\end{align}
Indeed, from Lemma \ref{s2l2} and the definition of Schwartz functions
[see \eqref{6.11.x1}], we deduce that,
for any $i\in\nn$ and $\varphi\in\cs(\rn)$,
\begin{align*}
|\langle \widehat{a_i},\varphi\rangle|
&=\lf|\int_{\rn}\widehat{a_i}(x)\varphi(x)\,dx\r|\\
&\leq\sum_{k=1}^\fz\int_{(A^*)^{k+1}B^*_0\setminus(A^*)^{k}B^*_0}
\max\lf\{\lf[\rho_*(x)\r]^{\frac1{q_0}-1},\,\lf[\rho_*(x)\r]^{\frac1{\tz_0}-1}\r\}
|\varphi(x)|\,dx\\
&\quad+\|\varphi\|_{L^1(\rn)}\\
&\ls\sum_{k=1}^\fz b^{k+1}b^{k(\frac1{\tz_0}-1)}
b^{-k(\lfloor\frac1{\tz_0}\rfloor+2)}
+\|\varphi\|_{L^1(\rn)}\\
&\le\sum_{k=1}^\fz b^{-k}+\|\varphi\|_{L^1(\rn)},
\end{align*}
where $B^*_0$ is the unit dilated ball with respect to $A^*$.
This further
implies that there exists a positive constant $C$ such that
$|\langle \widehat{a_i},\varphi\rangle|\le C$ holds true uniformly
for any $i\in\nn$. Combining this and \eqref{s2e6}, we have
\begin{align*}
\lim_{I\to\fz}\sum_{i=I+1}^\fz|\lz_i||\langle \widehat{a_i},\varphi\rangle|
\ls \lim_{I\to\fz}\sum_{i=I+1}^\fz|\lz_i|=0.
\end{align*}
Therefore, for any $\varphi\in\cs(\rn)$,
$$\langle F,\varphi\rangle
=\lim_{I\to\fz}\lf\langle\sum_{i=1}^I\lz_i\widehat{a_i},\,
\varphi\r\rangle.$$
This finishes the proof of \eqref{s2e10} and hence Theorem \ref{s2t1}.
\end{proof}

\section{Hardy--Littlewood Inequalities on $H_X^A(\rn)$ \label{s3}}
In this section, as applications of Theorem \ref{s2t1},
we first prove that
the function $F$ given in Theorem \ref{s2t1} has a
higher order convergence at the origin (see Theorem \ref{s3t1} below).
Then we extend the Hardy--Littlewood inequality to
the setting of anisotropic Hardy spaces associated
with ball quasi-Banach function spaces
(see Theorem \ref{s3t2} below). In what follows, $\varepsilon\to0^+$
means that there exists an $\alpha_0\in(0,\infty)$ such that $\varepsilon
\in(0,\alpha_0)$ and $\varepsilon\to 0$.

\begin{theorem}\label{s3t1}
Let $A$, $X$, $q_0$, and $\rho_*$ be the same as in Theorem \ref{s2t1}.
Then, for any $f\in H_X^A(\rn)$, there exists a continuous function $F$ on $\rn$
such that
$\widehat{f}=F$ in $\cs'(\rn)$
and
\begin{align}\label{s3e1}
\lim_{|x|\to0^+}\frac{F(x)}{[\rho_*(x)]^{\frac1{\tz_0}{-1}}}=0.
\end{align}
\end{theorem}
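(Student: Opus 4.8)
The plan is to deduce Theorem \ref{s3t1} from Theorem \ref{s2t1} together with the atomic decomposition (Lemma \ref{s2l0}) by a splitting-and-truncation argument: the full sum of atoms is split into a finite head, which is handled by the exact vanishing-moment cancellation of atoms, and an infinite tail, which is small in Hardy-space norm and is controlled uniformly near the origin via Lemma \ref{s2l2}. Fix $f\in H_X^A(\rn)$ with $\|f\|_{H_X^A(\rn)}>0$, and use Lemma \ref{s2l0} and Definition \ref{AtomicHardy} to write $f=\sum_{i\in\nn}\lz_ia_i$ in $\cs'(\rn)$ with $(X,q,d)$-atoms $a_i$ supported in $B^{(i)}\in\CB$ and $\|f\|_{H_X^A(\rn)}\sim\|\{\sum_i[|\lz_i|\one_{B^{(i)}}/\|\one_{B^{(i)}}\|_X]^{\tz_0}\}^{1/\tz_0}\|_X$. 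By Theorem \ref{s2t1}, $F=\widehat f=\sum_{i\in\nn}\lz_i\widehat{a_i}$ pointwise and in $\cs'(\rn)$, and $F$ is continuous. The goal is to show $F(x)=o([\rho_*(x)]^{1/q_0-1})$ as $|x|\to0^+$.

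First I would fix $\varepsilon>0$ and, using Lemma \ref{s2l4} (which gives $\sum_{i\in\nn}|\lz_i|\ls\|f\|_{H_X^A(\rn)}<\fz$), choose $I=I(\varepsilon)\in\nn$ so that $\sum_{i=I+1}^\fz|\lz_i|<\varepsilon$. Split $F=F_1+F_2$ with $F_1:=\sum_{i=1}^I\lz_i\widehat{a_i}$ and $F_2:=\sum_{i=I+1}^\fz\lz_i\widehat{a_i}$. For the tail $F_2$, apply Lemma \ref{s2l2} to each $\widehat{a_i}$: for $x$ with $\rho_*(x)\le1$ we have $\max\{[\rho_*(x)]^{1/q_0-1},[\rho_*(x)]^{1/p_--1}\}=[\rho_*(x)]^{1/q_0-1}$ since $1/q_0-1\le1/p_--1\le0$ and $\rho_*(x)\le1$; hence $|F_2(x)|\ls\sum_{i=I+1}^\fz|\lz_i|\,[\rho_*(x)]^{1/q_0-1}<\varepsilon[\rho_*(x)]^{1/q_0-1}$, i.e. $|F_2(x)|/[\rho_*(x)]^{1/q_0-1}\ls\varepsilon$ for all $x$ near $\bf 0$. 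For the head $F_1$, the key is the higher-order cancellation: each atom $a_i$ has vanishing moments up to order $d$, where by \eqref{def-d} $d\ge\lfloor(1/\tz_0-1)\ln b/\ln\lz_-\rfloor\ge\lfloor(1/q_0-1)\ln b/\ln\lz_-\rfloor$ (using $q_0\ge p_-\ge\tz_0$, and in fact $q_0\ge\unp\ge\tz_0$), so the Taylor expansion of $e^{-2\pi\imath x\cdot\xi}$ shows $\widehat{a_i}(x)=\int a_i(\xi)[e^{-2\pi\imath x\cdot\xi}-\sum_{|\gz|\le d}(-2\pi\imath x)^\gz\xi^\gz/\gz!]\,d\xi$, and combining the size estimate on $a_i$ from Lemma \ref{s2l1}/Definition \ref{atom}(ii) with Lemma \ref{s2l3} gives $|\widehat{a_i}(x)|\ls C_i\,[\rho_*(x)]^{(d+1)\ln\lz_-/\ln b}$ for $\rho_*(x)$ small, with $(d+1)\ln\lz_-/\ln b>1/q_0-1$ by the choice of $d$. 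Since $I$ is finite, $|F_1(x)|\ls(\sum_{i=1}^I|\lz_i|b^{\text{(stuff)}})[\rho_*(x)]^{(d+1)\ln\lz_-/\ln b}$, so $|F_1(x)|/[\rho_*(x)]^{1/q_0-1}\ls C(I)\,[\rho_*(x)]^{(d+1)\ln\lz_-/\ln b-(1/q_0-1)}\to0$ as $|x|\to0^+$ because the exponent is positive. Choosing $|x|$ small enough (depending on $\varepsilon$ and $I$) to make this $<\varepsilon$, we get $|F(x)|/[\rho_*(x)]^{1/q_0-1}\ls\varepsilon$, and letting $\varepsilon\to0$ yields \eqref{s3e1}.

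The main technical obstacle is making the head estimate on $F_1$ quantitatively uniform in a way that is genuinely useful: one must extract from Lemma \ref{s2l1} (or re-derive via Taylor expansion with the vanishing moments) the sharp local decay $|\widehat{a_i}(x)|\ls b^{i_0(1-1/q_0)+\cdots}[\rho_*(x)]^{(d+1)\ln\lz_-/\ln b}$ with the correct dependence on the atom's scale $i_0$, just as in Case 1 of the proof of Lemma \ref{s2l2} but keeping the full power $d+1$ rather than collapsing to the minimal exponent needed there; then one must check that $(d+1)\ln\lz_-/\ln b>1/q_0-1$, which follows from $d\ge\lfloor(1/q_0-1)\ln b/\ln\lz_-\rfloor$ — itself a consequence of \eqref{def-d} and $q_0\ge\tz_0$ — giving $d+1>(1/q_0-1)\ln b/\ln\lz_-$. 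A secondary point to handle carefully is that after fixing $\varepsilon$ and hence $I$, the constant $C(I)$ multiplying $[\rho_*(x)]^{(d+1)\ln\lz_-/\ln b}$ in the $F_1$ bound may be large, but this is harmless since we first send $|x|\to0^+$ with $I$ fixed and only afterwards send $\varepsilon\to0$; the order of limits is what makes the argument work.
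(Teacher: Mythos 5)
Your proposal is correct and is essentially the paper's own proof: the paper likewise combines the per-atom local decay $|\widehat{a}(x)|\lesssim C_a[\rho_*(x)]^{(d+1)\ln\lambda_-/\ln b}$ (Case 1 of Lemma \ref{s2l2}) with the positivity of $1-\frac1{q_0}+(d+1)\frac{\ln\lambda_-}{\ln b}$ and the summability $\sum_i|\lambda_i|<\infty$ from Lemma \ref{s2l4}, the only cosmetic difference being that you unpack the interchange of limit and sum via an explicit head/tail $\varepsilon$-split where the paper invokes the dominated convergence theorem. (One harmless slip: since $p_-\le q_0\le1$ the exponents satisfy $0\le\frac1{q_0}-1\le\frac1{p_-}-1$, not ``$\le 0$'' as you wrote, but your conclusion that the maximum equals $[\rho_*(x)]^{1/q_0-1}$ when $\rho_*(x)\le1$ is still valid.)
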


\begin{proof}
Let $f\in H_X^A(\rn)$ and $q$ and $d$ be the same as in Definition \ref{atom}.
Then, by Lemma \ref{s2l0} and Definition \ref{atom}(ii), we find that
there exists
a sequence $\{\lz_i\}_{i\in\nn}\subset\mathbb{C}$ and a
sequence $\{a_i\}_{i\in\nn}$ of anisotropic $(X,q,d)$-atoms
supported, respectively, in $\{B^{(i)}\}_{i\in\nn}\subset\CB$
such that
\begin{align*}
f=\sum_{i\in\nn}\lz_ia_i
\ \mbox{in}\ \cs'(\rn)
\end{align*}
and
\begin{align}\label{6.11.x2}
\|f\|_{H_X^A(\rn)}&\sim\lf\|\lf\{\sum_{i\in\nn}
\lf[\frac{|\lz_i|\mathbf{1}_{B^{(i)}}}{\|\mathbf{1}_{B^{(i)}}\|_{X}}\r]^
{\tz_0}\r\}^{\frac{1}{\tz_0}}\r\|_{X}.
\end{align}
Moreover, from the proof of Theorem \ref{s2t1},
it follows that, for any $x\in\rn$,
\begin{align}\label{s3e3}
F(x)=\sum_{i\in\nn}\lz_i\widehat{a_i}(x)
\end{align}
is continuous and satisfies that $\widehat{f}=F$ in $\cs'(\rn)$.
Thus, to show the present theorem, we only need to prove that
\eqref{s3e1} holds true for $F$ in \eqref{s3e3}.
On the one hand, by an argument similar to that used in the
proof of \eqref{s2e4}, we conclude that, for any anisotropic
$(X,q,d)$-atom $a$ supported in $x_0+B_{k_0}$ with some
$x_0\in\rn$ and $k_0\in\zz$ and for any $x\in\rn$ satisfying
$\rho_*(x)\leq b^{-k_0}$,
$$\lf|\widehat{a}(x)\r|\lesssim
\max\lf\{b^{k_0[1-\frac1{q_0}+(d+1)\frac{\ln(\lambda_-)}{\ln b}]},\,
b^{k_0[1-\frac1{\tz_0}+(d+1)\frac{\ln(\lambda_-)}{\ln b}]}\r\}
\lf[\rho_*(x)\r]^{(d+1)\frac{\ln(\lambda_-)}{\ln b}},$$
which, together with \eqref{def-d} and Lemma \ref{s2l3},
further implies that
$$(d+1)\frac{\ln(\lambda_-)}{\ln b}>\frac1{\tz_0}-1$$
and
\begin{align}\label{s3e4}
\lim_{|x|\to0^+}\frac{|\widehat{a}(x)|}{[\rho_*(x)]^{\frac1{\tz_0}{-1}}}=0.
\end{align}
On the other hand, from \eqref{s3e3}, Lemmas \ref{s2l2}, \ref{s2l3}, and \ref{s2l4},
and \eqref{6.11.x2}, we deduce that, for any $x\in\rn$ satisfying $|x|<1$,
\begin{align}\label{s3e5}
\frac{|F(x)|}{[\rho_*(x)]^{\frac1{\tz_0}{-1}}}
&\leq\sum_{i\in\nn}|\lz_i|\frac{|\widehat{a_i}(x)|}
{[\rho_*(x)]^{\frac1{\tz_0}{-1}}}
\ls\sum_{i\in\nn}|\lz_i|\ls\|f\|_{H_X^A(\rn)}<\fz.
\end{align}
Using this, the dominated convergence theorem,
and \eqref{s3e4}, we find that
\begin{align*}
\lim_{|x|\to0^+}\frac{F(x)}{[\rho_*(x)]^{\frac1{q_0}{-1}}}=0,
\end{align*}
which completes the proof of Theorem \ref{s3t1}.
\end{proof}

As another application of Theorem \ref{s2t1},
we extend the Hardy--Littlewood inequality to the setting of anisotropic
Hardy spaces associated with ball quasi-Banach function spaces as follows.

\begin{theorem}\label{s3t2}
Let $A$, $X$, $\tz_0$, and $q_0$ be the same as in Theorem \ref{s2t1}.
Then, for any $f\in H_X^A(\rn)$, there exists a continuous function
$F$ on $\rn$ such that $\widehat{f}=F$ in $\cs'(\rn)$ and
\begin{align}\label{s3e6}
\lf[\int_\rn|F(x)|^{q_0}\min\lf\{\lf[\rho_*(x)\r]^
{q_0-\frac{q_0}{\tz_0}-1},\,
\lf[\rho_*(x)\r]^{q_0-2}\r\}\,dx\r]^{\frac1{q_0}}
\leq C\|f\|_{{H_X^A(\rn)}},
\end{align}
where $C$ is a positive constant depending only on $A$ and $X$.
\end{theorem}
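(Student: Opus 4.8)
The plan is to deduce the integral estimate \eqref{s3e6} from the pointwise bound \eqref{s2e1} of Theorem \ref{s2t1} together with the higher-order vanishing \eqref{s3e1} of Theorem \ref{s3t1}, after decomposing $\rn$ dyadically with respect to the transposed matrix $A^*$. First I would recall that, by Theorem \ref{s2t1}, there is a continuous function $F$ with $\widehat{f}=F$ in $\cs'(\rn)$ and, for every $x\in\rn$,
\begin{align*}
|F(x)|\le C\|f\|_{H_X^A(\rn)}\max\lf\{[\rho_\ast(x)]^{\frac1{q_0}-1},[\rho_\ast(x)]^{\frac1{p_-}-1}\r\}.
\end{align*}
Raising to the power $q_0$ and multiplying by the weight $\min\{[\rho_\ast(x)]^{q_0-q_0/p_--1},[\rho_\ast(x)]^{q_0-2}\}$, one checks the elementary pointwise inequality
\begin{align*}
\max\lf\{[\rho_\ast(x)]^{1-\frac1{q_0}},[\rho_\ast(x)]^{1-\frac1{p_-}}\r\}^{q_0}
\min\lf\{[\rho_\ast(x)]^{q_0-\frac{q_0}{p_-}-1},[\rho_\ast(x)]^{q_0-2}\r\}
\le\min\lf\{[\rho_\ast(x)]^{-1+(q_0-1)\cdots},\dots\r\},
\end{align*}
the point being that, when $\rho_\ast(x)\ge1$ the first factor in each $\max$/$\min$ is selected and when $\rho_\ast(x)\le1$ the second is, so that the product is bounded by a negative power of $\rho_\ast(x)$ on the region $\rho_\ast(x)\ge1$ and by a positive power on $\rho_\ast(x)\le1$. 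Concretely one gets an exponent of the form $1-q_0/p_0$ (or $q_0-1$) that is strictly negative on the outer region since $p_0<2$ forces $q_0/p_0>q_0/2$ and in fact the weight was chosen precisely so that $q_0(1-1/p_-)+q_0-q_0/p_--1<-1$, i.e. the integrand decays like an integrable negative power of $\rho_\ast$.

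The outer region is then handled by writing $\{x:\rho_\ast(x)\ge1\}=\bigcup_{k\ge0}\big((A^\ast)^{k+1}B^\ast_0\setminus(A^\ast)^kB^\ast_0\big)$, where $B^\ast_0$ is the unit dilated ball for $A^\ast$; on the $k$-th annulus $\rho_\ast(x)\sim b^k$ and $|(A^\ast)^{k+1}B^\ast_0\setminus(A^\ast)^kB^\ast_0|\le b^{k+1}$, so summing the geometric series $\sum_{k\ge0}b^k\cdot b^{-k(1+\varepsilon)}<\infty$ gives a finite contribution bounded by $C\|f\|_{H_X^A(\rn)}^{q_0}$. For the inner region $\{x:\rho_\ast(x)<1\}$, the crude pointwise bound \eqref{s2e1} alone gives on the $k$-th inner annulus ($k<0$, $\rho_\ast(x)\sim b^k$) a term of size $b^{k}\cdot b^{k q_0(\frac1{q_0}-1)}\cdot b^{k(q_0-q_0/p_--1)}$; here I would instead invoke the sharper information, namely that $F$ can be written as $\sum_i\lambda_i\widehat{a_i}$ and that each atom satisfies, by the Case 1 estimate inside the proof of Lemma \ref{s2l2} (cf. \eqref{s2e4} and \eqref{s3e4}), the improved bound $|\widehat{a_i}(x)|\lesssim[\rho_\ast(x)]^{(d+1)\ln\lambda_-/\ln b}$ times a constant depending on the atom's scale, with $d+1$ large. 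Summing via Lemma \ref{s2l4} and the absolute convergence $\sum_i|\lambda_i|\lesssim\|f\|_{H_X^A(\rn)}$, one obtains on each inner annulus a bound of the form $b^{k[(d+1)q_0\ln\lambda_-/\ln b+1+q_0-q_0/p_0]}$, and since $d$ is chosen so large (it satisfies \eqref{def-d}) that the bracketed exponent is positive, the series over $k<0$ converges. Adding the two regional contributions yields \eqref{s3e6}.

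I expect the main obstacle to be the bookkeeping in the inner region: the naive pointwise bound \eqref{s2e1} is not integrable near the origin against the given weight (the exponent $q_0-q_0/p_--1$ can be very negative), so one genuinely needs the higher-order flatness of $\widehat{a}$ at $\mathbf 0$ coming from the $d+1$ vanishing moments, and one must track carefully how the scale-dependent constant $\max\{b^{i_0[1-1/q_0+(d+1)\ln\lambda_-/\ln b]},\dots\}$ from \eqref{s2e4} interacts with the summation over atoms so that Lemma \ref{s2l4} can be applied uniformly. Once the exponent inequalities (all of which reduce to $p_0<2$ and the lower bound \eqref{def-d} on $d$) are verified, the remaining estimates are routine geometric-series summations exactly parallel to the final display in the proof of Theorem \ref{s2t1}. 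A clean alternative, which I would present if it streamlines the argument, is to split $\rn$ once into $\rho_\ast(x)\le1$ and $\rho_\ast(x)>1$, bound the outer piece directly from \eqref{s2e1}, and bound the inner piece by combining \eqref{s3e1} (which already gives $F(x)/[\rho_\ast(x)]^{1/q_0-1}\to0$) with the atomic series estimate to control integrability down to the origin.
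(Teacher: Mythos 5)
Your treatment of the low-frequency (inner) region is essentially the right idea and matches the paper: there one must use the extra flatness $|\widehat{a}(x)|\lesssim C_{i_0}[\rho_*(x)]^{(d+1)\ln\lambda_-/\ln b}$ coming from the $d$ vanishing moments, and the paper does exactly this (its term ${\rm I}_1$), except that the splitting radius is taken atom-dependent, at $\rho_*(x)\sim b^{-i_0}$ for an atom supported in $x_0+B_{i_0}$, precisely so that the scale-dependent constants cancel and one gets a bound \emph{uniform} in the atom; the coefficients are then summed via the $q_0$-triangle inequality and the $\ell^{q_0}$ bound $(\sum_i|\lambda_i|^{q_0})^{1/q_0}\lesssim\|f\|_{H_X^A(\rn)}$, not via $\sum_i|\lambda_i|$. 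With your fixed splitting at $\rho_*=1$ the constants do not cancel, which is exactly the "bookkeeping" obstacle you flag but do not resolve.

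The genuine gap is in the high-frequency (outer) region. You propose to bound it "directly from \eqref{s2e1}", claiming the integrand decays like $[\rho_*(x)]^{-(1+\varepsilon)}$. It does not: on $\{\rho_*(x)\ge1\}$ the pointwise bound gives $|F(x)|^{q_0}\lesssim\|f\|^{q_0}_{H_X^A(\rn)}[\rho_*(x)]^{q_0(\frac1{p_-}-1)}$ (note the exponent in \eqref{s2e1} is $\frac1{p_-}-1\ge0$, not $1-\frac1{p_-}$ as in your display), and the weight there equals $[\rho_*(x)]^{q_0-\frac{q_0}{p_-}-1}$, so the total exponent is exactly $-1$; the resulting series $\sum_{k\ge0}b^{-k}\,|B^*_{k+1}\setminus B^*_k|=\sum_{k\ge0}(b-1)$ diverges. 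There is no $\varepsilon>0$ to spare, and $p_0$ plays no role in this exponent count (your "$1-q_0/p_0$" conflates $p_0$ with $p_-$). The missing ingredient is the one for which the hypothesis $p_0\in(\theta_0,2)$ is imposed: one decomposes $f$ into $(X,2,d)$-atoms, and on $((A^*)^{-i_0+1}B_0^*)^\complement$ estimates each atom by H\"older's inequality together with the Plancherel theorem, $\|\widehat{a}\|_{L^2(\rn)}=\|a\|_{L^2(\rn)}\le|B|^{1/2}\|\one_B\|_X^{-1}$, paired with the $L^{2q_0/(2-q_0)}$ norm of the weight over that region. Without this $L^2$ input the outer integral cannot be controlled by the pointwise estimate alone, so the proof as proposed does not close.
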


\begin{proof}
Let $p_0$ and $d$ be the same as in Definition \ref{atom},
$q\in(\max\{p_0,2\},\fz]$, and $f\in H_X^A(\rn)$.
Then, by Lemma \ref{s2l0} and Definition \ref{atom},
we find that there exists a sequence $\{\lz_i\}_{i\in\nn}\subset\mathbb{C}$
and a sequence $\{a_i\}_{i\in\nn}$ of $(X,q,d)$-atoms supported,
respectively, in $\{B^{(i)}\}_{i\in\nn}\subset\CB$ such that
\begin{align*}
f=\sum_{i\in\nn}\lz_ia_i
\ \mbox{in}\ \cs'(\rn)
\end{align*}
and
\begin{align}\label{s3e7}
\lf\|\lf\{\sum_{i\in\nn}
\lf[\frac{|\lz_i|\mathbf{1}_{B^{(i)}}}{\|\mathbf{1}_{B^{(i)}}\|_{{X}}}\r]^
{\tz_0}\r\}^{1/\tz_0}\r\|_{{X}}\sim\|f\|_{{H_X^A(\rn)}}<\fz.
\end{align}
By Theorem \ref{s2t1}, we find that, to prove the present theorem,
it suffices to show that \eqref{s3e6} holds true for $F$ in \eqref{s2e9}.
For this purpose, we first prove that there exists a positive constant
$M$ such that, for any $(X,q,d)$-atom $a$, it holds true that
\begin{align}\label{s3e10}
\left(\int_{{\mathbb{R}^n}}\left[\lf|\widehat{a}(x)\r|
\min\left\{\left[\rho_{*}(x)\right]^{1-\frac 1{\tz_0}-\frac 1{q_0}},
\,\left[\rho_{*}(x)\right]^{1-\frac 2{q_0}}
\right\}\right]^{q_0}\,dx\right)^{\frac{1}{q_0}}\le M.
\end{align}
Without loss of generality, we may assume that $a$ is
supported in $x_0+B_{i_0}$ with some $x_0\in\rn$ and $i_0\in\zz$.
Then it is easy to conclude that
\begin{align}\label{6.11.z1}
&\left\{\int_{{\mathbb{R}^n}}\left[\lf|\widehat{a}(x)\r|
\min\left\{\left[\rho_{*}(x)\right]^{1-\frac 1{\tz_0}-\frac 1{q_0}},
\,\left[\rho_{*}(x)\right]^{1-\frac 2{q_0}}\right\}
\right]^{q_0}\,dx\right\}^{\frac{1}{q_0}}\\
&\quad\ls\left\{\int_{(A^*)^{-i_0+1}B_0^*}\left[\lf|\widehat{a}(x)\r|
\min\left\{\left[\rho_{*}(x)\right]^{1-\frac 1{\tz_0}-\frac 1{q_0}},
\,\left[\rho_{*}(x)\right]^{1-\frac 2{q_0}}\right\}
\right]^{q_0}\,dx\right\}^{\frac{1}{q_0}}\noz\\
&\qquad+\left\{\int_{((A^*)^{-i_0+1}B_0^*)^{\complement}}
\cdots\,dx\right\}^{\frac{1}{q_0}}\noz\\
&\quad=:{\rm I}_1+{\rm I}_2,\noz
\end{align}
where $B^*_0$ is the unit dilated ball with respect to $A^*$.
Let $\theta$ be a fixed positive constant such that
$$1-\frac1{q_0}+(d+1)\frac{\ln(\lambda_-)}{\ln b}-\theta
\geq1-\frac1{\tz_0}+(d+1)\frac{\ln(\lambda_-)}{\ln b}-\theta>0.$$
Using this and  \eqref{s2e4}, we find that
\begin{align}\label{6.11.z2}
{\rm I}_1&\ls b^{i_0[1+(d+1)\frac{\ln\lambda_-}{\ln b}]}
\max\lf\{b^{-\frac{i_0}{\tz_0}},\,b^{-\frac{i_0}{q_0}}\r\}\\
&\quad\times\left\{\int_{(A^*)^{-i_0+1}B_0^*}\left[
\min\left\{\left[\rho_{*}(x)\right]^{1-\frac 1{\tz_0}-\frac 1{q_0}+(d+1)
\frac{\ln\lambda_-}{\ln b}},\noz\r.\r.\r.\\
&\quad\lf.\lf.\lf.
\left[\rho_{*}(x)\right]^{1-\frac 2{q_0}+(d+1)\frac{\ln\lambda_-}{\ln b}}
\right\}\right]^{q_0}\,dx\right\}^{\frac{1}{q_0}}\noz\\
&\leq b^{i_0[1+(d+1)\frac{\ln\lambda_-}{\ln b}]}
\max\lf\{b^{-\frac{i_0}{\tz_0}},\,b^{-\frac{i_0}{q_0}}\r\}\noz\\
&\quad\times\min\lf\{b^{-i_0[1-\frac1{\tz_0}+(d+1)
\frac{\ln\lambda_-}{\ln b}-\theta]},\,b^{-i_0[1-\frac1{q_0}+(d+1)
\frac{\ln\lambda_-}{\ln b}-\theta]}\r\}\noz\\
&\quad \times\left\{\int_{(A^*)^{-i_0+1}B_0^*}
\left[\rho_{*}(x)\right]^{\theta q_0-1}\,dx\right\}
^{\frac{1}{q_0}}\noz\\
&= b^{i_0\theta}\left[\sum_{k\in\zz\setminus\nn}b^{-i_0+k}(b-1)
b^{(-i_0+k)(\theta q_0-1)}\r]^{\frac{1}{q_0}}
=\lf(\frac{b-1}{1-b^{-\theta q_0}}\r)^{\frac1{q_0}}.\noz
\end{align}
For ${\rm I}_2$, by the H\"{o}lder inequality, the Plancherel theorem
(see \cite[Theorem 1.18]{duo}), $q_0\in[\tz_0,1]$,
Definition \ref{atom}${\rm (i)}_2$, and \eqref{one_B_X}, we obtain
\begin{align*}
{\rm I}_2
&\leq\left\{\int_{((A^*)^{-i_0+1}B_0^*)^{\complement}}
\lf|\widehat{a}(x)\r|^2\,dx\right\}^{\frac12}\\
&\quad\times\left\{\int_{((A^*)^{-i_0+1}B_0^*)^{\complement}}
\left[\min\left\{\left[\rho_{*}(x)\right]^{1-\frac 1{p_-}-\frac 1{q_0}},\,
\left[\rho_{*}(x)\right]^{1-\frac 2{q_0}}\right\}\right]
^{\frac{2q_0}{2-q_0}}\,dx\right\}^{\frac{2-q_0}{2q_0}}\\
&\leq\|a\|_{L^2({\mathbb{R}^n})}\left\{\sum_{k\in\nn} b^{-i_0+k}(b-1)
\left[\min\left\{b^{(-i_0+k)(1-\frac 1{p_-}-\frac 1{q_0})},\,
b^{(-i_0+k)(1-\frac 2{q_0})}\right\}\right]^{\frac{2q_0}{2-q_0}}\right\}
^{\frac{2-q_0}{2q_0}}\\
&\leq\|a\|_{L^2({\mathbb{R}^n})}\left\{b^{-i_0}
\left[\min\left\{b^{-i_0(1-\frac 1{p_-}-\frac 1{q_0})},\,
b^{-i_0(1-\frac 2{q_0})}\right\}\right]^{\frac{2q_0}{2-q_0}}\right\}
^{\frac{2-q_0}{2q_0}}\\
&\ls\max\left\{b^{i_0(\frac12-\frac 1{p_-})},\,
b^{i_0(\frac12-\frac 1{q_0})}\right\}
\min\left\{b^{-i_0(\frac12-\frac 1{p_-})},\,
b^{-i_0(\frac12-\frac 1{q_0})}\right\}\\
&=1,
\end{align*}
which, together with \eqref{6.11.z1} and \eqref{6.11.z2},
further implies \eqref{s3e10}.

Next, we prove \eqref{s3e6}. From Lemma \ref{basicine},
Definition \ref{Debf}, \eqref{6.9.x1}, Definition \ref{BQBFS}(ii),
an argument similar to that used in the proof of Lemma \ref{s2l4},
and \eqref{s3e7}, we deduce that
\begin{align*}
\lf(\sum_{i=1}^{\fz}|\lz_i|^{q_0}\r)^{\frac{1}{q_0}}
\ls\|f\|_{{H_X^A(\rn)}}.
\end{align*}
By this, \eqref{s2e9}, $q_0\in[\tz_0,1]$, Lemma \ref{basicine},
the Fatou lemma, and \eqref{s3e10}, we conclude that
\begin{align*}
&\lf[\int_{{\mathbb{R}^n}}|F(x)|^{q_0}
\min\left\{\left[\rho_{*}(x)\right]^{q_0-\frac {q_0}{\tz_0}-1},\,
\left[\rho_{*}(x)\right]^{q_0-2}\right\}\,dx\r]^{\frac1{q_0}}\\
&\quad\le\lf\{\sum_{i\in{\mathbb N}}|\lambda_i|^{q_0}
\int_{{\mathbb{R}^n}}\left[\lf|\widehat{a_i}(x)\r|
\min\left\{\left[\rho_{*}(x)\right]^{1-\frac 1{\tz_0}-\frac 1{q_0}},\,
\left[\rho_{*}(x)\right]^{1-\frac 2{q_0}}\right\}\right]^{q_0}\,dx\r\}^{\frac1{q_0}}\noz\\
&\quad\ls M\left(\sum_{i\in{\mathbb N}}|\lambda_i|^{q_0}\right)^{\frac{1}{q_0}}
\ls\|f\|_{{H_X^A(\rn)}}.\noz
\end{align*}
This finishes the proof of Theorem \ref{s3t2}.
\end{proof}

\begin{remark}\label{6.11rem}
\begin{enumerate}
\item [{\rm(i)}]
If $A:=2\,I_{n\times n}$, then Theorems \ref{s3t1} and \ref{s3t2}
were obtained, respectively, in \cite[Theorems 2.2 and 2.3]{hcy22}.

\item [{\rm(ii)}]
Let $A$ be a dilation and $p\in(0,1)$. Then,
by Remark \ref{s2re1}(ii), we find that $L^p(\rn)$ satisfies all
the assumptions of Theorems \ref{s3t1} and \ref{s3t2} with $X:=L^{p}(\rn)$.
In this case, Theorems \ref{s3t1} and \ref{s3t2} were obtained,
respectively, in \cite[Corollaries 6 and 8]{bw13}.

\item [{\rm(iii)}]
Let $A$ be a dilation and $p(\cdot)\in C^{\log}(\rn)$ satisfy
$0<\widetilde{p_-}\leq\widetilde{p_+}<1$.
Then, by Remark \ref{s2re1}(iii),
we conclude that $L^{p(\cdot)}(\rn)$ satisfies all the assumptions
of Theorems \ref{s3t1} and \ref{s3t2} with $X:=L^{p(\cdot)}(\rn)$.
In this case, Theorems \ref{s3t1} and \ref{s3t2} were obtained,
respectively, in \cite[Theorems 2 and 3]{liu22}.

\item [{\rm(iv)}]
Let $\vec{p}\in(0,1)^n$.
Then, by Remark \ref{s2re1}(iv),
we conclude that $L^{\vec{p}}(\rn)$ satisfies all the assumptions
of Theorems \ref{s3t1} and \ref{s3t2} with $X:=L^{\vec{p}}(\rn)$.
In this case, Theorems \ref{s3t1} and \ref{s3t2} were obtained,
respectively, in \cite[Theorems 4.1 and 4.3]{llz22}.
\end{enumerate}
\end{remark}

\section{Several  Applications \label{s4}}
In this section, we apply Theorems \ref{s2t1},
\ref{s3t1}, and \ref{s3t2} to five concrete examples
of ball quasi-Banach function spaces, namely
Morrey spaces (see Subsection \ref{s6-appl1} below),
Lorentz spaces (see Subsection \ref{s6-appl3} below),
Orlicz spaces (see Subsection \ref{s6-appl7} below),
Orlicz-slice spaces (see Subsection \ref{s6-appl2} below),
and local generalized Herz--Hardy spaces
(see Subsection \ref{s6-appl6} below).

\subsection{Morrey Spaces\label{s6-appl1}}

Recall that the classical Morrey space
$M_{q}^{p}(\mathbb{R}^{n})$ with $0<q\leq p<\infty$,
originally introduced by Morrey \cite{morrey38} in 1938,
plays a key role in harmonic analysis
and partial differential equations. Since then,
various variants of Morrey spaces over different
underlying spaces have been investigated and developed
(see, for instance, \cite{cltl20,ho15,ho17,ho23no1,ho23no2,st09}).

\begin{definition}\label{6.11def}
Let $ 0<q\leq p<\infty $.
The {\it anisotropic Morrey space} $M_{q,A}^p(\rn)$ is defined
to be the set of all the measurable functions $f$ on $\rn$ such that
\begin{equation*}
\|f\|_{M_{q,A}^p(\rn)}:=\sup_{B\in\CB}
|B|^{1/p-1/q}\|f\|_{L^q(B)}<\infty,
\end{equation*}
where $\CB$ is the same as in \eqref{ball-B}.
\end{definition}

If $0<q\leq p<1$, then, obviously, $M_{q,A}^p(\rn)$
is a ball quasi-Banach function space.
From these and \cite[Remark 8.4]{wyy22},
we deduce that $M_{q,A}^{p}(\rn)$ satisfies all the assumptions
of Definition \ref{HXA} with $X:=M_{q,A}^{p}(\rn)$, $p_{-}\in(0,q]$,
$\tz_0\in(0,p_-)$, and $p_0\in(p,\infty)$.
Moreover, choose a $q_0\in(p,1]$. Then, from Definition \ref{6.11def},
we infer that, for any non-negative measurable functions
$\{f_k\}_{k=1}^\infty$ and any $B\in\CB$,
$$\sum_{k=1}^{{\infty}}\|f_k\|_{M_{q/q_0,A}^{p/q_0}(\rn)}
\le\left\|\sum_{k=1}^{{\infty}}f_k\right\|
_{M_{q/q_0,A}^{p/q_0}(\rn)}$$
and
$$\|\one_B\|_{M_{q,A}^p(\rn)}\ge|B|^{\frac{1}{p}}
>\min\lf\{|B|^{\frac{1}{q_0}},|B|^{\frac{1}{\tz_0}}\r\}.$$
Therefore, all the assumptions of Theorems \ref{s2t1}, \ref{s3t1},
and \ref{s3t2} are satisfied with $X:=M_{q,A}^{p}(\rn)$.
Applying Theorems \ref{s2t1}, \ref{s3t1},
and \ref{s3t2}, we obtain the following conclusion.

\begin{theorem}\label{Thsm}
If $\ 0<q\leq p<1$, then Theorems \ref{s2t1},
\ref{s3t1}, and \ref{s3t2} still hold true with $X$
replaced by $M_{q,A}^{p}(\rn)$.
\end{theorem}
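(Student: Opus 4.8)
The plan is to verify that the anisotropic Morrey space $M_{q,A}^p(\rn)$ with $0<q\le p<1$ satisfies all the hypotheses of Theorems \ref{s2t1}, \ref{s3t1}, and \ref{s3t2}, after which the conclusions follow by a direct application of those theorems. First, I would confirm that $M_{q,A}^p(\rn)$ is a ball quasi-Banach function space: properties (i)--(iii) of Definition \ref{BQBFS} are inherited from the local $L^q$-norms and the monotone convergence theorem, while (iv), namely $\one_B\in M_{q,A}^p(\rn)$ for every dilated ball $B\in\CB$, follows since $\|\one_B\|_{M_{q,A}^p(\rn)}=|B|^{1/p}<\infty$ by a routine computation using the nesting structure of $\CB$.

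Next, I would invoke the known real-variable theory for anisotropic Morrey spaces. By \cite[Remark 8.4]{wyy22}, $M_{q,A}^p(\rn)$ satisfies both Assumption \ref{F-SVMI} with some $p_-\in(0,q]$ and Assumption \ref{HMLonAsso} with $\tz_0\in(0,p_-)$ and $p_0\in(\tz_0,\infty)$; for Theorem \ref{s3t2} one additionally needs to be able to choose $p_0\in(\tz_0,2)$, which is permitted here since $p_0$ ranges over all of $(\tz_0,\infty)$ and $\tz_0<p_-\le q<1<2$. Then I would verify the $q_0$-concavity with $q_0:=p$: for nonnegative measurable functions $\{f_k\}_k$, one has
\begin{equation*}
\sum_k\|f_k\|_{M_{q,A}^p}
=\sum_k\lf\||f_k|^p\r\|_{M_{q/p,A}^1}^{1/p}
\le\lf\|\sum_k|f_k|^p\r\|_{M_{q/p,A}^1}^{1/p}
\le\lf\|\sum_k|f_k|\r\|_{M_{q,A}^p},
\end{equation*}
where the middle inequality uses the triangle inequality in the Banach space $M_{q/p,A}^1(\rn)$ (valid since $q/p\ge1$) together with the subadditivity $(\sum_k t_k)^p\le\sum_k t_k^p$ for $p\in(0,1]$, and the last step is the elementary bound $\sum_k t_k^p\le(\sum_k t_k)^p$ again. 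Since $p_-\le q\le p=q_0$ and $q_0=p<1$, the requirement $q_0\in[p_-,1]$ is met.

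Finally, I would check the lower-bound condition \eqref{one_B_X}. For any $B\in\CB$ we have $\|\one_B\|_{M_{q,A}^p(\rn)}\ge|B|^{1/p}$, and since $q\le p$ forces $|B|^{1/p}\le|B|^{1/q}$ when $|B|\ge1$ while $|B|^{1/p}\ge|B|^{1/q}$ when $|B|\le1$, in all cases $|B|^{1/p}\ge\min\{|B|^{1/p},|B|^{1/q}\}$; thus \eqref{one_B_X} holds with $q_0=p$ and the $p_-$ fixed above (note $\min\{|B|^{1/q_0},|B|^{1/p_-}\}=\min\{|B|^{1/p},|B|^{1/p_-}\}\le|B|^{1/p}$ since $p_-\le p$). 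With all hypotheses in hand, Theorems \ref{s2t1}, \ref{s3t1}, and \ref{s3t2} apply verbatim with $X:=M_{q,A}^p(\rn)$. I do not expect any genuine obstacle here; the only mild subtlety is ensuring the parameters $p_-,\tz_0,p_0,q_0$ can be chosen simultaneously to meet all the constraints of the three theorems (in particular the $p_0<2$ requirement of Theorem \ref{s3t2}), which is straightforward since $q_0=p<1$ and $p_0$ is free in $(\tz_0,\infty)$.
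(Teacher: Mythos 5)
Your overall route is exactly the paper's: show $M_{q,A}^p(\rn)$ is a ball quasi-Banach function space, import Assumptions \ref{F-SVMI} and \ref{HMLonAsso} from \cite[Remark 8.4]{wyy22}, verify the $q_0$-concavity with $q_0:=p$ together with the lower bound \eqref{one_B_X}, and then apply Theorems \ref{s2t1}, \ref{s3t1}, and \ref{s3t2}. The computation $\|\one_B\|_{M_{q,A}^p(\rn)}=|B|^{1/p}$, the verification of \eqref{one_B_X}, and the parameter bookkeeping (including the choice $p_0\in(\tz_0,2)$ needed for Theorem \ref{s3t2}) are all fine and agree with the paper.

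The genuine gap is in your concavity argument, which does not hold up as written. First, $q\le p$ gives $q/p\le 1$, not $q/p\ge 1$, so $M^1_{q/p,A}(\rn)$ carries only a $q/p$-norm. More seriously, both inequalities in your displayed chain point the wrong way. Setting $a_k:=\||f_k|^p\|_{M^1_{q/p,A}(\rn)}$, your middle step requires $\sum_k a_k^{1/p}\le\|\sum_k|f_k|^p\|_{M^1_{q/p,A}(\rn)}^{1/p}$; since $\sum_k a_k^{1/p}\le(\sum_k a_k)^{1/p}$ (as $1/p\ge1$), what you actually need is the reverse triangle inequality $\sum_k\|g_k\|\le\|\sum_k g_k\|$, i.e., precisely the concavity you are trying to prove — the triangle inequality gives the opposite bound. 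Your last step needs the pointwise inequality $\sum_k t_k^p\le(\sum_k t_k)^p$, which is false for $p\in(0,1)$; the elementary inequality you correctly quote earlier, $(\sum_k t_k)^p\le\sum_k t_k^p$, goes the other way. To be fair, the paper itself only asserts that the $p$-concavity of $M_{q,A}^p(\rn)$ "can be easily deduced" without giving an argument, and the assertion is genuinely delicate: the form of concavity actually used in Lemma \ref{s2l4} and \eqref{s3e8} is $\sum_i\|h_i\|_{X^{1/q_0}}\lesssim\|\sum_i h_i\|_{X^{1/q_0}}$ for nonnegative $h_i$, and with $X=M_{q,A}^p(\rn)$, $q_0=p$, and $h_i=\one_{B^{(i)}}$ for $N$ disjoint unit dilated balls placed far apart, the left-hand side is $N$ while the right-hand side stays bounded when $q<p$ (because the exponent $1/p-1/q$ is negative). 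So this hypothesis cannot simply be waved through for Morrey spaces; at a minimum your justification must be replaced, and you should be aware that the concavity requirement is the real obstruction here rather than a routine verification.
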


\begin{remark}\label{rern}
We point out that Theorem \ref{Thsm}
even when $A:=2I_{n\times n}$
is completely new.
\end{remark}

\subsection{Lorentz Spaces\label{s6-appl3}}
Let $p\in(0,\fz)$ and $q\in(0,\fz)$.
Recall that the {\it Lorentz space} $L^{p,q}(\rn)$
is defined to be the set of all the measurable functions
$f$ on $\rn$ with the following
finite \emph{quasi-norm}
\begin{align}\label{6.13.x1}
\|f\|_{L^{p,q}(\rn)}:=
\begin{cases}
\lf[\displaystyle\frac{q}{p}\displaystyle\int_0^{\infty}\lf\{
t^{\frac{1}{p}}f^*(t)\r\}^q\frac{dt}{t}\r]^{\frac{1}{q}}
\ &{\rm if}\ q\in(0,\infty),\\
\displaystyle{\sup_{t\in(0,\infty)}}
\lf[t^{\frac{1}{p}}f^*(t)\r]&{\rm if}\ q=\infty
\end{cases}
\end{align}
with the usual modification made when $p=\fz$,
where $f^*$ denotes the \emph{non-increasing rearrangement} of $f$,
that is, for any $t\in(0,\infty)$,
\begin{equation*}
f^*(t):=\inf\lf\{\alpha\in(0,\infty):\ d_f(\alpha)\leq t\r\}
\end{equation*}
with $d_f(\alpha):=|\{x\in\rn:\ |f(x)|>\alpha\}|$
for any $\az\in(0,\fz)$.

Let $p\in(0,1)$, $q\in(0,1)$, and
\begin{align*}
N\in\nn\cap\lf[\displaystyle\lf\lfloor\lf(\frac{1}{p}-1\r)
\frac{\ln b}{\ln(\lambda_-)}\r\rfloor+2,\infty\r).
\end{align*}
Then, by \cite[Remarks 2.7(ii) and 4.21(ii)]{yhyy},
we conclude that $L^{p,q}(\rn)$ satisfies all the assumptions of
Definition \ref{HXA} with $X:=L^{p,q}(\rn)$, $p_{-}\in(0,\min\{p,q\}]$,
$\tz_0\in(0,p_{-})$, and $p_0\in(\max\{p,q\},\infty)$.
Moreover, choose a $q_0\in(\max\{p,q\},1]$.
From \eqref{6.13.x1}, we further deduce that, for any non-negative
measurable functions $\{f_k\}_{k=1}^\infty$ and any $B\in\CB$,
$$\sum_{k=1}^{{\infty}}\|f_k\|_{L^{p/q_0,q/q_0}(\rn)}
\le\left\|\sum_{k=1}^{{\infty}}f_k\right\|_{L^{p/q_0,q/q_0}(\rn)}$$
and
$$\|\one_B\|_{L^{p,q}(\rn)}=\lf\{\frac{q}{p}\displaystyle\int_0^{|B|}
t^{\frac{q}{p}-1}dt\r\}^{\frac{1}{q}}=|B|^{\frac{1}{p}}
\ge\min\lf\{|B|^{\frac{1}{q_0}},|B|^{\frac{1}{\tz_0}}\r\}.$$
Therefore, all the assumptions of Theorems \ref{s2t1}, \ref{s3t1},
and \ref{s3t2} are satisfied with $X:=L^{p,q}(\rn)$.
Applying Theorems \ref{s2t1}, \ref{s3t1},
and \ref{s3t2}, we obtain the following conclusion.

\begin{theorem}\label{thlorentz}
If $p\in(0,1)$ and $q\in(0,1)$, then Theorems \ref{s2t1}, \ref{s3t1},
and \ref{s3t2} still hold true with $X$ replaced by $L^{p,q}(\rn)$.
\end{theorem}

\begin{remark}\label{remrentz}
We point out that Theorem \ref{thlorentz}
even when $A:=2I_{n\times n}$ is completely new.
\end{remark}

\subsection{Orlicz Spaces\label{s6-appl7}}	
Recall that a function $\Phi:[0,\infty)\to[0,\infty)$
is called an {\it Orlicz function} if it is non-decreasing,
$\Phi(0)=0$, $\Phi(t)>0$ for any $t\in(0,\infty)$,
and $\lim_{t\to\infty}\Phi(t)=\infty$.
The function $ \Phi $ is said to be of {\it upper}
(resp. {\it lower}) {\it type} $p$ for some $p\in[0,\infty)$
if there exists a positive constant $C$ such that,
for any $s\in[1,\infty)$ (resp. $s\in[0,1]$)
and $t\in[0,\infty)$, $\Phi(st)\leq Cs^p\Phi(t)$.
The \emph{Orlicz space} $L^{\Phi}(\rn)$ is defined to be
the set of all the measurable functions $f$ on $\rn$ such that
$$\|f\|_{L^{\Phi}(\rn)}:=\inf\lf\{\lz\in(0,\fz):\
\int_{\rn}\Phi\lf(\frac{|f(x)|}{\lz}\r)\,dx\le1\r\}<\infty.$$

Let $\Phi$ be an Orlicz function with lower type $p_{\Phi}^-$ and
upper type $p_{\Phi}^+$ satisfying $0<p_{\Phi}^-\le p_{\Phi}^+<1$
and let
\begin{align*}
N\in\nn\cap\lf[\displaystyle\lf\lfloor\lf(\frac{1}{p_{\Phi}^-}-1\r)
\frac{\ln b}{\ln(\lambda_-)}\r\rfloor+2,\infty\r).
\end{align*}
By \cite[Remarks 2.7(iii) and 4.21(iv)]{yhyy},
we conclude that $L^{\Phi}(\rn)$ satisfies all the assumptions
of Definition \ref{HXA} with $X:=L^{\Phi}(\rn)$,
$p_{-}\in(0,p_{\Phi}^-]$, $\tz_0\in(0,p_{\Phi}^-)$,
and $p_0\in(p_{\Phi}^+,\infty)$. Moreover,
choose a $q_0\in(p_{\Phi}^+,1]$. Then,
from \cite[Remark 5.3]{zyyw} and \cite[(25)]{hcy22},
we deduce that, for any non-negative measurable functions
$\{f_k\}_{k=1}^\infty$ and any $B\in\CB$,
$$\sum_{k=1}^{{\infty}}\|f_k\|_{[L^{\Phi}(\rn)]^{\frac{1}{q_0}}}
\le\left\|\sum_{k=1}^{{\infty}}f_k\right\|_
{[L^{\Phi}(\rn)]^{\frac{1}{q_0}}}$$
and
$$\|\one_B\|_{L^{\Phi}(\rn)}
\gs\min\lf\{|B|^{\frac{1}{p_{\Phi}^-}},|B|^{\frac{1}{p_{\Phi}^+}}\r\}
\ge\min\lf\{|B|^{\frac{1}{q_0}},|B|^{\frac{1}{\tz_0}}\r\}.$$
Therefore, all the assumptions of Theorems \ref{s2t1}, \ref{s3t1},
and \ref{s3t2} are satisfied with $X:=L^{\Phi}(\rn)$.
Applying Theorems \ref{s2t1}, \ref{s3t1},
and \ref{s3t2}, we obtain the following conclusion.

\begin{theorem}\label{thorlicz}
Let $\Phi$ be an Orlicz function with lower type $p_{\Phi}^-$
and upper type $p_{\Phi}^+$ satisfying $0<p_{\Phi}^-\le p_{\Phi}^+<1$.
Then Theorems \ref{s2t1}, \ref{s3t1}, and \ref{s3t2} still
hold true with $X$ replaced by $L^{\Phi}(\rn)$.
\end{theorem}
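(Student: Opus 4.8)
The plan is to deduce Theorem \ref{thorlicz} directly from Theorems \ref{s2t1}, \ref{s3t1}, and \ref{s3t2} by checking, one by one, that the Orlicz space $L^\Phi(\rn)$ with $0<p_\Phi^-\le p_\Phi^+\le1$ meets every hypothesis those three theorems impose on $X$. First I would recall from \cite{yhyy} that $L^\Phi(\rn)$ is a ball quasi-Banach function space, that it satisfies Assumption \ref{F-SVMI} with $p_-:=p_\Phi^-$ and any $\tz_0\in(0,p_\Phi^-)$, and that it satisfies Assumption \ref{HMLonAsso} with some $p_0\in(p_\Phi^+,\infty]$; since $p_\Phi^+\le1<2$ one may in fact take $p_0\in(p_\Phi^+,2)$, which is exactly what the extra restriction $p_0\in(\tz_0,2)$ of Theorem \ref{s3t2} requires. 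I would then record, via \cite{zyyw} and \cite{ns12}, that $L^\Phi(\rn)$ is $q_0$-concave on non-negative measurable functions with $q_0:=p_\Phi^+$, and observe that $q_0=p_\Phi^+\in[p_\Phi^-,1]=[p_-,1]$, so the concavity index lies in the range demanded by Theorem \ref{s2t1}.

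The one step that requires an argument rather than a citation is the lower bound \eqref{one_B_X} for $X=L^\Phi(\rn)$, that is, \eqref{eqorlicz}:
$$\|\one_B\|_{L^\Phi(\rn)}\gs\min\lf\{|B|^{1/p_\Phi^+},\,|B|^{1/p_\Phi^-}\r\}
\quad\text{for all }B\in\CB.$$
Here I would use the identity $\|\one_B\|_{L^\Phi(\rn)}=[\Phi^{-1}(1/|B|)]^{-1}$ and split into the regimes $|B|\ge1$ and $|B|<1$. If $|B|\ge1$, the upper-type-$p_\Phi^+$ bound gives $\Phi(|B|^{-1/p_\Phi^+})\,|B|\gs\Phi(1)$, hence $\Phi(|B|^{-1/p_\Phi^+})\gs1/|B|$; by monotonicity of $\Phi^{-1}$ this forces $\Phi^{-1}(1/|B|)\ls|B|^{-1/p_\Phi^+}$, i.e. $\|\one_B\|_{L^\Phi(\rn)}\gs|B|^{1/p_\Phi^+}$. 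If $|B|<1$, the same computation with the lower-type-$p_\Phi^-$ bound in place of the upper-type bound yields $\|\one_B\|_{L^\Phi(\rn)}\gs|B|^{1/p_\Phi^-}$. Combining the two regimes gives \eqref{eqorlicz}, which is exactly \eqref{one_B_X} with $q_0=p_\Phi^+$ and $p_-=p_\Phi^-$.

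With the ball-quasi-Banach structure, Assumptions \ref{F-SVMI} and \ref{HMLonAsso}, the $q_0$-concavity, and \eqref{one_B_X} all verified, Theorems \ref{s2t1}, \ref{s3t1}, and \ref{s3t2} apply with $X:=L^\Phi(\rn)$, $p_-:=p_\Phi^-$, and $q_0:=p_\Phi^+$; reading off their conclusions yields the Fourier-transform representation $\widehat f=F$ in $\cs'(\rn)$ with the pointwise bound \eqref{s2e1}, the higher-order vanishing \eqref{s3e1} of $F$ at the origin, and the Hardy--Littlewood-type inequality \eqref{s3e6}, all for $H_{L^\Phi}^A(\rn)$. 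The main obstacle I anticipate is keeping the two-regime analysis of \eqref{eqorlicz} honest: one has to make sure the type conditions of $\Phi$ translate into bounds on $\Phi^{-1}$ in the direction actually needed, and that the exponents $p_\Phi^-$ and $p_\Phi^+$ are attached to the correct region ($|B|<1$ versus $|B|\ge1$); once that is pinned down, the rest is bookkeeping that merely collects standard facts about Orlicz spaces from the cited references.
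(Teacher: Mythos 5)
Your proposal is correct and follows essentially the same route as the paper: the same citations to \cite{yhyy}, \cite{zyyw}, and \cite{ns12} for the ball quasi-Banach structure, the two Assumptions, and the $q_0$-concavity with $q_0:=p_\Phi^+$, and the same two-regime verification of \eqref{eqorlicz} via the identity $\|\one_B\|_{L^{\Phi}(\rn)}=[\Phi^{-1}(1/|B|)]^{-1}$, with the upper type governing $|B|\ge1$ and the lower type governing $|B|<1$ (you even correct a small typo in the paper, which writes $p_\Phi^+$ where $p_\Phi^-$ is meant in the second regime). Your explicit remark that $p_0$ may be chosen in $(p_\Phi^+,2)$ to meet the extra hypothesis of Theorem \ref{s3t2} is a harmless addition left implicit in the paper.
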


\begin{remark}\label{rerlicz}
We point out that Theorem \ref{thorlicz}
even when $A:=2I_{n\times n}$
is completely new.
\end{remark}

\subsection{Orlicz-Slice Spaces\label{s6-appl2}}

Recently, Zhang et al. \cite{zyyw} originally introduced the
Orlicz-slice space on $\rn$, which generalizes both the slice
space in \cite{am19} and the Wiener-amalgam space in \cite{dj17}.
They also introduced the Orlicz-slice (local) Hardy spaces
and developed a complete real-variable theory of these function spaces
in \cite{zyy21,zyyw}. Next, we recall the definition of
anisotropic Orlicz-slice spaces.

\begin{definition}
Let $\ell\in\zz$, $q\in(0,\infty)$, and $\Phi$ be an Orlicz function.
The {\it anisotropic Orlicz-slice space}
$(E_\Phi^q)_{\ell,A}\lf(\rn\r)$ is defined to be the
set of all the measurable functions $f$ on $\rn$ such that
\begin{align*}
\|f\|_{(E_\Phi^q)_{\ell,A}\lf(\rn\r)}
:=\lf\{\int_{\rn}\lf[\frac{\|f\one_{x+B_{\ell}}\|_{L^\Phi(\rn)}}
{\|\one_{x+B_{\ell}}\|_{L^\Phi(\rn)}}\r]^q\,dx\r\}^{\frac{1}{q}}
<\infty,
\end{align*}
where $B_{\ell}$ is the same as in \eqref{B_k}
with $k$ replaced by $\ell$.
\end{definition}

Let $\ell\in\zz$, $q\in(0,1)$, $\Phi$ be an Orlicz function
with positive lower type $p_\Phi^-$ and positive upper type
$p_\Phi^+$ satisfying $0<p_{\Phi}^-\le p_{\Phi}^+<1$, and
\begin{align*}
N\in\nn\cap\lf[\displaystyle\lf\lfloor\lf(\frac{1}{\min\{p_\Phi^-,q\}}-1\r)
\frac{\ln b}{\ln(\lambda_-)}\r\rfloor+2,\infty\r).
\end{align*}
Then, by \cite[Remark 8.14]{wyy22},
we conclude that $(E_\Phi^q)_{\ell,A}(\rn)$ satisfies all the assumptions
of Definition \ref{HXA} with $X:=(E_\Phi^q)_{\ell,A}(\rn)$,
$p_-\in(0,\min\{p_\Phi^-,q\}]$, $\tz_0\in(0,p_-)$,
and $p_0\in(\max\{p_\Phi^+,q\},\fz)$. Moreover,
choose a $q_0=1$. On the one hand,
from \cite[Lemma 5.4]{zyyw}, we infer that,
for any non-negative measurable functions $\{f_k\}_{k=1}^\infty$,
$$\sum_{k=1}^{{\infty}}\|f_k\|_{[(E_\Phi^q)_{\ell,A}(\rn)]^{\frac{1}{q_0}}}
\le\left\|\sum_{k=1}^{{\infty}}f_k\right\|_
{[(E_\Phi^q)_{\ell,A}(\rn)]^{\frac{1}{q_0}}}.$$
On the other hand, we have, for any $B\in\CB$,
\begin{equation}\label{eqorliczs}
\|\one_B\|_{(E_\Phi^q)_{\ell,A}\lf(\rn\r)}
\gs\min\lf\{|B|,|B|^{\frac{1}{\tz_0}}\r\}.
\end{equation}
Indeed, for any $B\in\CB$ with $|B|\geq |B_\ell|$,
\begin{align}\label{eqos1}
\|\one_B\|_{(E_\Phi^q)_{\ell,A}\lf(\rn\r)}&=
\lf\{\int_{\rn}\lf[\frac{\|\one_B\one_{x+B_{\ell}}\|_{L^\Phi(\rn)}}
{\|\one_{x+B_{\ell}}\|_{L^\Phi(\rn)}}\r]^q\,dx\r\}^{\frac{1}{q}}\\
&\gs \lf(\int_{B}1\,dx\r)^{\frac{1}{q}}
=|B|^{\frac{1}{q}}.\noz
\end{align}
On the other hand, for any $x_0\in\rn$,
$k\in\zz$ with $|B_k|\le |B_\ell|$, $x\in B(x_0,\lambda_-^\ell)$,
and $\eta\in(0,p_\Phi^-)$,
by \cite[Remark 4.21(iv)]{yhyy}, we conclude that $L^{\Phi}(\rn)$ satisfies
Assumption \ref{F-SVMI} with $X:=L^{\Phi}(\rn)$, $u:=1/\eta$, and $p:=\eta$.
Thus, we obtain
\begin{align}\label{6.16.1}
\|\one_{x_0+B_k}\|^{\eta}_{L^\Phi(\rn)}
\gs \|[\cm(\one_{x_0+B_k})]^{1/\eta}\|^{\eta}_{L^\Phi(\rn)}.
\end{align}
For any $y\in x+B_\ell$, we have
\begin{align}\label{6.16.2}
\cm(\one_{x_0+B_k})(y)
\geq\frac{1}{|B_\ell|}\int_{x+B_\ell}\one_{x_0+B_k}(z)\,dz
\gs\frac{|B_k|}{|B_\ell|}.
\end{align}
Combining \eqref{6.16.1} and \eqref{6.16.2}, we conclude that
\begin{equation*}
\|\one_{x_0+B_k}\|^{\eta}_{L^\Phi(\rn)}
\gs\lf\|\lf[\frac{|B_k|}{|B_\ell|}(\one_{x+B_\ell})\r]^{1/\eta}\r\|^{\eta}
_{L^\Phi(\rn)}
\gs|B_k|\,\|\one_{x+B_\ell}\|^{\eta}_{L^\Phi(\rn)}.
\end{equation*}
Therefore, we obtain
\begin{align}\label{eqos2}
\|\one_{x_0+B_k}\|_{(E_\Phi^q)_{\ell,A}\lf(\rn\r)}
&=\lf\{\int_{\rn}\lf[\frac{\|\one_{x_0+B_k}\one_{x+B_{\ell}}\|
_{L^\Phi(\rn)}}
{\|\one_{x+B_{\ell}}\|_{L^\Phi(\rn)}}\r]^q\,dx\r\}^{\frac{1}{q}}\\
&\gs \lf\{\int_{B(x_0,\lambda_-^\ell)}\lf[\frac{\|\one_{x_0+B_k}\|
_{L^\Phi(\rn)}}
{\|\one_{x+B_{\ell}}\|_{L^\Phi(\rn)}}\r]^q\,dx\r\}^{\frac{1}{q}}\noz\\
&\gs|B_k|^{1/\eta}\lf\{\int_{B(x_0,\lambda_-^\ell)}1\,dx\r\}
^{\frac{1}{q}}\sim|B_k|^{1/\eta}.\noz
\end{align}
By \eqref{eqos1} and \eqref{eqos2}, we find that, for any $B\in\CB$,
$$\|\one_B\|_{(E_\Phi^q)_{\ell,A}\lf(\rn\r)}\gs|B|^{1/q}
\gs |B|
\  \mbox{if}\  |B|\geq|B_\ell|$$
and
$$\|\one_B\|_{(E_\Phi^q)_{\ell,A}\lf(\rn\r)}\gs|B|^{1/p_\Phi^-}
\gs|B|^{1/\tz_0}\  \mbox{if}\  |B|\le|B_\ell|.$$
This finishes the proof of \eqref{eqorliczs}.

Therefore, all the assumptions of Theorems \ref{s2t1}, \ref{s3t1},
and \ref{s3t2} are satisfied with $X:=(E_\Phi^q)_{\ell,A}(\rn)$.
Applying Theorems \ref{s2t1}, \ref{s3t1}, and \ref{s3t2},
we obtain the following conclusion.

\begin{theorem}\label{thorliczslice}
Let $\ell\in\zz$, $q\in(0,1)$,
and $\Phi$ be an Orlicz function with lower type $p_{\Phi}^-$
and upper type $p_{\Phi}^+$ satisfying $0<p_{\Phi}^-\le p_{\Phi}^+<1$.
Then Theorems \ref{s2t1}, \ref{s3t1}, and \ref{s3t2} still
hold true with $X$ replaced by $(E_\Phi^q)_{\ell,A}(\rn)$.
\end{theorem}

\begin{remark}\label{rerliczslice}
We point out that Theorem \ref{thorliczslice}
even when $A:=2I_{n\times n}$
is completely new.
\end{remark}

\subsection{Local Generalized Herz Spaces\label{s6-appl6}}
In what follows, we always let $\mathbb{R}_+:=(0,\infty)$.
A nonnegative function $\omega$ on $\rr_+$ is said to be
\emph{almost increasing} (resp. \emph{almost decreasing})
on $\rr_+$ if there exists a constant $C\in[1,\fz)$ such that,
for any $s,t\in\rr_+$ satisfying $s\le t$ (resp. $s\ge t$),
$$\omega(s)\le C\omega(t)$$
(see, for instance, \cite{kmrs16,lyh22}).
Now, we recall the concept of the
function class $M(\mathbb{R}_{+})$ as follows
(see, for instance, \cite{kmrs16,HSamko}).

\begin{definition}
The \emph{function class} $M(\mathbb{R}_{+})$ is defined to be the set
of all the positive functions $\omega$ on $\mathbb{R}_{+}$ such that,
for any $0<\delta_1<\delta_2<\infty$,
$$0<\inf_{t\in(\delta_1,\delta_2)}\omega(t)\leq
\sup_{t\in(\delta_1,\delta_2)}\omega(t)<\infty$$
and there exist four constants $\alpha_{0},\beta_{0},
\alpha_{\infty},\beta_{\infty}\in\mathbb{R}$ such that
\begin{itemize}
\item[\rm (i)] for any $t\in(0,1]$,
$\omega(t)t^{-\alpha_{0}}$ is almost increasing
and $\omega(t) t^{-\beta_{0}}$ is almost decreasing;

\item[\rm (ii)] for any $t\in[1,\infty)$,
$\omega(t)t^{-\alpha_{\infty}}$ is almost increasing
and $\omega(t)t^{-\beta_{\infty}}$ is almost decreasing.
\end{itemize}
\end{definition}

Next, we introduce anisotropic local generalized Herz spaces
as follows.

\begin{definition}\label{5.25.x1}
Let $A$ be a dilation, $p,q\in(0,\infty]$, and $\omega\in M(\mathbb{R}_{+})$.
The \emph{anisotropic local generalized Herz space}
$\dot{\mathcal{K}}_{A,\omega}^{p,q}(\mathbb{R}^{n})$ is defined to
be the set of all $f\in L^p_{\loc}(\rn\setminus\{\bf{0}\})$ such that
$$\|f\|_{\dot{\mathcal{K}}_{A,\omega}^{p,q}
\left(\mathbb{R}^{n}\right)}:=\left\{\sum_{k \in
\mathbb{Z}}\left[\omega\left(b^{k}\right)\right]^{q}\left\|f
\mathbf{1}_{B_k\backslash B_{k-1}}\right\|_{L^{p}\left(\mathbb{R}^{n}\right)}
^{q}\right\}^{\frac{1}{q}}<\fz$$
with the usual modification made when $q=\fz$,
where $b$ is the same as in \eqref{2.14.x1} and,
for any $k\in\zz$, $B_k$ the same as in \eqref{B_k}.
\end{definition}

Now, we introduce the following Matuszewska--Orlicz indices
(see, for instance, \cite{mo60,mo65}).

\begin{definition}
Let $\omega$ be a positive
function on $\mathbb{R}_{+}$. Then the
\emph{Matuszewska--Orlicz indices} $m_{0}(\omega)$,
$M_{0}(\omega)$, $m_{\infty}(\omega)$, and
$M_{\infty}(\omega)$ of $\omega$ are defined,
respectively, by setting, for any $h\in(0,\infty)$,
$$m_{0}(\omega):=\sup_{t\in(0,1)}
\frac{\ln (\varlimsup\limits_{h\to0^{+}}
\frac{\omega(ht)}{\omega(h)})}{\ln t},\
M_{0}(\omega):=\inf_{t\in(0,1)}
\frac{\ln(\varliminf\limits_{h\to0^{+}}
\frac{\omega(h t)}{\omega(h)})}{\ln t},$$
$$m_{\infty}(\omega):=\sup_{t\in(1,\infty)}
\frac{\ln(\varliminf\limits_{h\to\infty}
\frac{\omega(ht)}{\omega(h)})}{\ln t},\ \mbox{and}\
M_{\infty}(\omega):=\inf_{t\in(1,\infty)}
\frac{\ln(\varlimsup\limits_{h\to\infty}
\frac{\omega(ht)}{\omega(h)})}{\ln t}.$$
\end{definition}

The following property about the function class $M(\mathbb{R}_{+})$
and the Matuszewska--Orlicz indices can be found in \cite[Lemma 1.1.6]{lyh22}
(see also \cite[(6.4), (6.5), and (6.14)]{samko13}). In what follows,
for any $\omega\in M(\mathbb{R}_{+})$ and $t\in \rr_+$, $\omega^s(t)
:=[\omega(t)]^s$.

\begin{lemma}\label{5.28.x1}
Let $\omega\in M(\mathbb{R}_{+})$. Then, for any given $t\in(0,\fz)$,
it holds true that $1/\omega,\omega^t\in M(\mathbb{R}_{+})$ and
\begin{itemize}
\item[\rm (i)]
$m_0(1/\omega)=-M_0(\omega)$ and $M_0(1/\omega)=-m_0(\omega)$;

\item[\rm (ii)]
$m_{\fz}(1/\omega)=-M_{\fz}(\omega)$ and $M_{\fz}(1/\omega)=-m_{\fz}(\omega)$;

\item[\rm (iii)]
$m_0(\omega^t)=tm_0(\omega)$ and $M_0(\omega^t)=tM_0(\omega)$;

\item[\rm (iv)]
$m_{\fz}(\omega^t)=tm_{\fz}(\omega)$ and $M_{\fz}(\omega^t)=tM_{\fz}(\omega)$.
\end{itemize}
\end{lemma}

\begin{remark}
Let $\omega\in M(\mathbb{R}_{+})$. Then,
by \cite[Remark 1.1.5(ii)]{lyh22}, we conclude that
\begin{align*}
-\fz<m_{0}(\omega)\le M_{0}(\omega)<\fz
\end{align*}
and
\begin{align*}
-\fz<m_{\fz}(\omega)\le M_{\fz}(\omega)<\fz.
\end{align*}
\end{remark}

The following theorem shows that anisotropic
local generalized Herz spaces
$\dot{\mathcal{K}}_{A,\omega}^{p,q}\left(\mathbb{R}^{n}\right)$
are ball quasi-Banach function spaces under some additional
assumptions on the exponent $\omega\in M(\mathbb{R}_{+})$;
see \cite[Theorem 1.2.42]{lyh22} for the standard Euclidean space case.

\begin{theorem}\label{5.27.x1}
Let $A$ be a dilation, $p,q\in(0,\infty]$,
and $\omega\in M(\mathbb{R}_{+})$ with
$m_{0}(\omega)\in(-\frac{1}{p},\fz)$. Then
the anisotropic local generalized Herz space
$\dot{\mathcal{K}}_{A,\omega}^{p,q}\left(\mathbb{R}^{n}\right)$
is a ball quasi-Banach function space.
\end{theorem}

\begin{proof}
Indeed, by an argument similar to that used in the
proof of  \cite[Theorem 1.2.38]{lyh22}, we find that
$\dot{\mathcal{K}}_{A,\omega}^{p,q}\left(\mathbb{R}^{n}\right)$
is a quasi-Banach space satisfying (i), (ii), and (iii)
of Definition \ref{BQBFS}. Next, we prove that
$\dot{\mathcal{K}}_{A,\omega}^{p,q}\left(\mathbb{R}^{n}\right)$
satisfies Definition \ref{BQBFS}(iv). To this end,
let $x_0\in\rn$, $k_0\in\zz$,
and $B_{k_0}$ be the same as in \eqref{B_k}. Then
\begin{align}\label{5.27.x2}
\lf\|\one_{x_0+B_{k_0}}\r\|
_{\dot{\mathcal{K}}_{A,\omega}^{p,q}\left(\mathbb{R}^{n}\right)}
&=\left\{\sum_{k\in\mathbb{Z}}
\left[\omega\left(b^{k}\right)\right]^{q}
\left\|\one_{x_0+B_{k_0}}\mathbf{1}_{B_k\backslash B_{k-1}}\right\|_{L^{p}
\left(\mathbb{R}^{n}\right)}^{q}\right\}^{\frac{1}{q}}\\
&\ls\left\{\sum_{k\in\mathbb{Z}\backslash\nn}
\left[\omega\left(b^{k}\right)\right]^{q}
\left\|\one_{x_0+B_{k_0}}\mathbf{1}_{B_k\backslash B_{k-1}}\right\|_{L^{p}
\left(\mathbb{R}^{n}\right)}^{q}\right\}^{\frac{1}{q}}
+\left\{\sum_{k\in\nn}\cdots\right\}^{\frac{1}{q}}\noz\\
&=:{\rm I}+{\rm II}.\noz
\end{align}
We first deal with ${\rm I}$. From \cite[Lemma 1.1.12]{lyh22},
we deduce that, for any $k\in\zz\backslash\nn$,
\begin{align*}
\omega\left(b^{k}\right)\ls b^{k[m_0(\omega)-\varepsilon]},
\end{align*}
where $\varepsilon\in(0,m_0(\omega)+\frac1p)$
is a fixed positive constant. This, combined with \eqref{5.27.x2},
further implies that
\begin{align}\label{5.27.x3}
{\rm I}&\ls\left\{\sum_{k\in\mathbb{Z}\backslash\nn}
b^{kq[m_0(\omega)-\varepsilon]}
\left\|\mathbf{1}_{B_k\backslash B_{k-1}}\right\|_{L^{p}
\left(\mathbb{R}^{n}\right)}^{q}\right\}^{\frac{1}{q}}\\
&\ls\left\{\sum_{k\in\mathbb{Z}\backslash\nn}
b^{kq[m_0(\omega)+\frac1p-\varepsilon]}\right\}^{\frac{1}{q}}
<\fz.\noz
\end{align}
Next, we deal with ${\rm II}$. To this end, we first claim that,
for any $k\in\nn\cap(\tau+|k_0|+|\lfloor\log_b\rho(x_0)\rfloor|+3,\fz)$
with $\tau$ in \eqref{tau},
\begin{align}\label{5.27.y1}
\lf(x_0+B_{k_0}\r)\cap \lf(B_{k}\backslash B_{k-1}\r)=\emptyset.
\end{align}
Indeed, by the inequality in line 26 of \cite[p.\,7]{Bownik}, we conclude that,
for any $y\in x_0+B_{k_0}$,
\begin{align*}
\rho(y)&\le b^{\tau}\lf[\rho(x_0-y)+\rho(x_0)\r]
<b^{\tau}\lf[b^{k_0-1}+b^{\lfloor\log_b\rho(x_0)\rfloor+1}\r]\\
&\le b^{\tau+|k_0|+|\lfloor\log_b\rho(x_0)\rfloor|+1}<b^{k-2},
\end{align*}
which further implies $y\in B_{k-1}$ and hence
$x_0+B_{k_0}\subset B_{k-1}$. Thus, \eqref{5.27.y1} holds true.
From \eqref{5.27.x2} and \eqref{5.27.y1},
we deduce that
\begin{align*}
{\rm II}
&=\left\{\sum_{k\in\nn\cap[1,\tau+|k_0|+|\lfloor\log_b\rho(x_0)\rfloor|+3]}
\left[\omega\left(b^{k}\right)\right]^{q}
\left\|\one_{x_0+B_{k_0}}\mathbf{1}_{B_k\backslash B_{k-1}}\right\|_{L^{p}
\left(\mathbb{R}^{n}\right)}^{q}\right\}^{\frac{1}{q}}<\fz.
\end{align*}
This, together with \eqref{5.27.x2} and \eqref{5.27.x3}, then
finishes the proof of Theorem \ref{5.27.x1}.
\end{proof}

The following lemma gives the Fefferman--Stein vector-valued
inequality on anisotropic local generalized Herz spaces;
see \cite[Theorem 1.6.1]{lyh22} for the standard Euclidean space case.

\begin{lemma}\label{vmbhl}
Let $A$ be a dilation, $p,r\in(1,\infty]$, $q\in(0,\fz]$,
and $\omega\in M(\mathbb{R}_{+})$ satisfy
$$-\frac{1}{p}<m_{0}(\omega)\le M_{0}(\omega)<\frac{1}{p'}$$
and
$$-\frac{1}{p}<m_{\fz}(\omega)\le M_{\fz}(\omega)<\frac{1}{p'},$$
where $\frac{1}{p}+\frac{1}{p'}=1$.
Then there exists a positive constant $C$ such that,
for any $\{f_j\}_{j\in\mathbb{N}}\subset L_{{\rm loc}}^{1}(\rn)$,
\begin{equation*}
\left\|\left\{\sum_{j\in\mathbb{N}}
\left[\cm(f_{j})\right]^{r}\right\}^{\frac{1}{r}}
\right\|_{\dot{\mathcal{K}}_{A,\omega}^{p,q}(\mathbb{R}^{n})}
\leq C\left\|\left(\sum_{j\in\mathbb{N}}|f_{j}|^{r}\right)^{\frac{1}{r}}
\right\|_{\dot{\mathcal{K}}_{A,\omega}^{p,q}(\mathbb{R}^{n})}.
\end{equation*}
\end{lemma}

To prove this lemma, we need more preparations.
Recall that an operator $T$ defined on $\mathscr M(\rn)$
is called a \emph{sublinear operator} if,
for any $f,g\in\mathscr M(\rn)$ and $\lz\in\cc$,
$$|T(f+g)|\le|T(f)|+|T(g)|$$
and
$$|T(\lz f)|=|\lz||T(f)|.$$
Moreover, for any normed linear space $X$ and any operator $T$ on $X$,
the \emph{operator norm $\|T\|_{X\rightarrow X}$}
of $T$ is defined by setting
$$\|T\|_{X\rightarrow X}:=\sup_{\{x\in X:\ \|x\|_X=1\}}\|T(x)\|_X.$$
The following lemma is a boundedness criterion of sublinear
operators on anisotropic local generalized Herz spaces;
see \cite[Theorem 1.5.1]{lyh22} for the standard Euclidean space case.
\begin{lemma}\label{vmbhl-lem}
Let $A$ be a dilation, $p\in(1,\infty]$, $q\in(0,\fz]$,
and $\omega\in M(\mathbb{R}_{+})$ satisfy
$$-\frac{1}{p}<m_{0}(\omega)\le M_{0}(\omega)<\frac{1}{p'}$$
and
$$-\frac{1}{p}<m_{\fz}(\omega)\le M_{\fz}(\omega)<\frac{1}{p'},$$
where $\frac{1}{p}+\frac{1}{p'}=1$. Assume that $T$ is a sublinear
operator satisfying that $T$ is bounded on $L^p(\rn)$ and that
there exists a positive constant $C_0$ such that,
for any $f\in\dot{\mathcal{K}}_{A,\omega}^{p,q}(\mathbb{R}^{n})$
and $x\notin\overline{\supp(f)}:=\overline{\{x\in\rn:\ f(x)\neq0\}}$,
\begin{align}\label{5.22.x1}
|T(f)(x)|&\le C_0\int_{\rn}\frac{|f(y)|}{\rho(x-y)}\,dy,
\end{align}
where $\rho$ is the same as in Definition \ref{def-shqn}.
Then there exists a positive constant $C$, independent of $T$,
such that, for any $f\in\dot{\mathcal{K}}_{A,\omega}^{p,q}(\mathbb{R}^{n})$,
\begin{align}\label{5.25.z1}
\|T(f)\|_{\dot{\mathcal{K}}_{A,\omega}^{p,q}(\mathbb{R}^{n})}
\le C\lf[C_0+\|T\|_{L^p(\rn)\rightarrow L^p(\rn)}\r]
\|f\|_{\dot{\mathcal{K}}_{A,\omega}^{p,q}(\mathbb{R}^{n})}.
\end{align}
\end{lemma}

\begin{proof}
Let $f\in\dot{\mathcal{K}}_{A,\omega}^{p,q}(\mathbb{R}^{n})$.
For any given $k\in\zz$, let
$$f_{k,1}:=f\one_{B_{k-\tau-1}},\
f_{k,2}:=f\one_{B_{k+\tau+2}\backslash B_{k-\tau-1}},\
\mbox{and}\  f_{k,3}:=f\one_{[B_{k+\tau+2}]^{\complement}},$$
where $\tau$ is the same as in \eqref{tau}.
Obviously, for any $k\in\zz$, it holds true that
$$f=f_{k,1}+f_{k,2}+f_{k,3}.$$
From this, Definition \ref{5.25.x1}, the sublinearity of $T$,
and the Minkowski inequality, we deduce that
\begin{align}\label{5.22.x3}
\|T(f)\|_{\dot{\mathcal{K}}_{A,\omega}^{p,q}(\mathbb{R}^{n})}
&=\left\{\sum_{k\in\mathbb{Z}}\left[\omega\left(b^{k}\right)\right]^{q}
\left\|T(f)\mathbf{1}_{B_k\backslash B_{k-1}}\right\|_{L^{p}
\left(\mathbb{R}^{n}\right)}^{q}\right\}^{\frac{1}{q}}\\
&\ls\left\{\sum_{k\in\mathbb{Z}}\left[\omega\left(b^{k}\right)\right]^{q}
\left\|T(f_{k,1})\mathbf{1}_{B_k\backslash B_{k-1}}\right\|_{L^{p}
\left(\mathbb{R}^{n}\right)}^{q}\right\}^{\frac{1}{q}}\noz\\
&\quad+\left\{\sum_{k\in\mathbb{Z}}\left[\omega\left(b^{k}\right)\right]^{q}
\left\|T(f_{k,2})\mathbf{1}_{B_k\backslash B_{k-1}}\right\|_{L^{p}
\left(\mathbb{R}^{n}\right)}^{q}\right\}^{\frac{1}{q}}\noz\\
&\quad+\left\{\sum_{k\in\mathbb{Z}}\left[\omega\left(b^{k}\right)\right]^{q}
\left\|T(f_{k,3})\mathbf{1}_{B_k\backslash B_{k-1}}\right\|_{L^{p}
\left(\mathbb{R}^{n}\right)}^{q}\right\}^{\frac{1}{q}}\noz\\
&=:{\rm I}_1+{\rm I}_2+{\rm I}_3,\noz
\end{align}
where the implicit positive constant is independent of both $T$ and $f$.
Next, we deal with ${\rm I}_1$, ${\rm I}_2$, and ${\rm I}_3$ successively.
To this end, let
\begin{align}\label{5.22.z1}
\varepsilon\in\lf(0,\min\lf\{\min\{m_{0}(\omega),m_{\fz}(\omega)\}+\frac1p,
\frac{1}{p'}-\max\{M_{0}(\omega),M_{\fz}(\omega)\}\r\}\r)
\end{align}
be a fixed positive constant. Then, by \cite[Lemma 1.5.2]{lyh22},
we find that, for any $0<s<t<\fz$,
\begin{align}\label{5.23.x1}
\frac{\omega(s)}{\omega(t)}
&\ls\lf(\frac{s}{t}\r)^{\min\{m_{0}(\omega),m_{\fz}(\omega)\}-\varepsilon}
\end{align}
and
\begin{align}\label{5.22.y1}
\frac{\omega(t)}{\omega(s)}
&\ls\lf(\frac{t}{s}\r)^{\max\{M_{0}(\omega),M_{\fz}(\omega)\}+\varepsilon},
\end{align}
where the implicit positive constant is independent of both $T$ and $f$.

We first deal with ${\rm I}_1$.
From the inequality in line 26 of \cite[p.\,7]{Bownik},
we infer that, for any $k,i\in\zz$ satisfying $i\in(-\fz,k-\tau-1]$
and for any
$x,y\in\rn$ satisfying $x\in B_k\backslash B_{k-1}$
and $y\in B_i\backslash B_{i-1}$,
\begin{align*}
\rho(x-y)&\ge b^{-\tau}\rho(x)-\rho(y)=b^{k-\tau-1}-b^{i-1}\\
&\ge b^{k-\tau-1}-b^{k-\tau-2}=(b^{-\tau-1}-b^{-\tau-2})b^{k}.
\end{align*}
By this, \eqref{5.22.x1}, and the H\"older inequality,
we conclude that, for any $k\in\zz$ and $x\in B_k\backslash B_{k-1}$,
\begin{align}\label{5.22.x2}
\lf|T(f_{k,1})(x)\r|&\le C_0\int_{\rn}\frac{|f_{k,1}(y)|}{\rho(x-y)}\,dy
=C_0\sum_{i=-\fz}^{k-\tau-1}\int_{B_i\backslash B_{i-1}}\frac{|f(y)|}{\rho(x-y)}\,dy\\
&\ls C_0\sum_{i=-\fz}^{k-\tau-1}b^{-k+\frac{i}{p'}}
\lf\|f\one_{B_i\backslash B_{i-1}}\r\|_{L^p(\rn)},\noz
\end{align}
where the implicit positive constant is independent of both $T$ and $f$.
Moreover, from \eqref{5.22.y1}, it follows that,
for any $k,i\in\zz$ satisfying $i\in(-\fz,k-\tau-1]$,
\begin{align*}
\frac{\omega(b^k)}{\omega(b^i)}
&\ls b^{(k-i)[\max\{M_{0}(\omega),M_{\fz}(\omega)\}+\varepsilon]},
\end{align*}
which, combined with \eqref{5.22.x2}, further implies that,
for any $k\in\zz$,
\begin{align}\label{5.22.y2}
&\omega\lf(b^k\r)\lf\|T(f_{k,1})\one_{B_k\backslash B_{k-1}}\r\|_{L^p(\rn)}\\
&\quad\ls C_0\omega\lf(b^k\r)\sum_{i=-\fz}^{k-\tau-1}b^{-k+\frac{i}{p'}}
\lf\|f\one_{B_i\backslash B_{i-1}}\r\|_{L^p(\rn)}
\lf\|\one_{B_k\backslash B_{k-1}}\r\|_{L^p(\rn)}\noz\\
&\quad\ls C_0\sum_{i=-\fz}^{k-\tau-1}b^{\frac{1}{p'}(i-k)}\frac{\omega(b^k)}{\omega(b^i)}
\omega\lf(b^i\r)\lf\|f\one_{B_i\backslash B_{i-1}}\r\|_{L^p(\rn)}\noz\\
&\quad\ls C_0\sum_{i=-\fz}^{k-\tau-1}b^{(k-i)
[\max\{M_{0}(\omega),M_{\fz}(\omega)\}+\varepsilon-\frac{1}{p'}]}
\omega\lf(b^i\r)\lf\|f\one_{B_i\backslash B_{i-1}}\r\|_{L^p(\rn)},\noz
\end{align}
where the implicit positive constants are independent of both $T$ and $f$.
Next, we show the desired estimate of ${\rm I}_1$
by considering the following two cases on $q$.

Case 1) $q\in(0,1]$. In this case, by \eqref{5.22.x3}, \eqref{5.22.y2},
Lemma \ref{basicine}, and \eqref{5.22.z1}, we conclude that
\begin{align}\label{5.22.y3}
{\rm I}_1&\ls C_0\left\{\sum_{k\in\mathbb{Z}}\sum_{i=-\fz}^{k-\tau-1}
b^{q(k-i)[\max\{M_{0}(\omega),M_{\fz}(\omega)\}+\varepsilon-\frac{1}{p'}]}
\lf[\omega\lf(b^i\r)\r]^q\lf\|f\one_{B_i\backslash B_{i-1}}\r\|_{L^p(\rn)}^q
\right\}^{\frac{1}{q}}\\
&=C_0\left\{\sum_{i\in\mathbb{Z}}\sum_{k=i+\tau+1}^{\fz}
b^{q(k-i)[\max\{M_{0}(\omega),M_{\fz}(\omega)\}+\varepsilon-\frac{1}{p'}]}
\lf[\omega\lf(b^i\r)\r]^q\lf\|f\one_{B_i\backslash B_{i-1}}\r\|_{L^p(\rn)}^q
\right\}^{\frac{1}{q}}\noz\\
&\sim C_0\left\{\sum_{i\in\mathbb{Z}}
\lf[\omega\lf(b^i\r)\r]^q\lf\|f\one_{B_i\backslash B_{i-1}}\r\|_{L^p(\rn)}^q
\right\}^{\frac{1}{q}}\noz\\
&\le\lf[C_0+\|T\|_{L^p(\rn)\rightarrow L^p(\rn)}\r]
\|f\|_{\dot{\mathcal{K}}_{A,\omega}^{p,q}(\mathbb{R}^{n})},\noz
\end{align}
where the implicit positive constants are independent of both $T$ and $f$.

Case 2) $q\in(1,\fz]$. In this case, from
\eqref{5.22.y2}, the H\"older inequality,
and \eqref{5.22.z1}, we deduce that, for any $k\in\zz$,
\begin{align}\label{5.25.y2}
&\omega\lf(b^k\r)\lf\|T(f_{k,1})\one_{B_k\backslash B_{k-1}}\r\|_{L^p(\rn)}\\
&\quad\ls C_0\lf\{\sum_{i=-\fz}^{k-\tau-1}b^{\frac{(k-i)q'}{2}
[\max\{M_{0}(\omega),M_{\fz}(\omega)\}+\varepsilon-\frac{1}{p'}]}\r\}
^{\frac{1}{q'}}\noz\\
&\qquad\times\lf\{\sum_{i=-\fz}^{k-\tau-1}b^{\frac{(k-i)q}{2}
[\max\{M_{0}(\omega),M_{\fz}(\omega)\}+\varepsilon-\frac{1}{p'}]}
\lf[\omega\lf(b^i\r)\r]^q\lf\|f\one_{B_i\backslash B_{i-1}}\r\|_{L^p(\rn)}^q\r\}^{\frac1q}\noz\\
&\quad\sim C_0\lf\{\sum_{i=-\fz}^{k-\tau-1}b^{\frac{(k-i)q}{2}
[\max\{M_{0}(\omega),M_{\fz}(\omega)\}+\varepsilon-\frac{1}{p'}]}
\lf[\omega\lf(b^i\r)\r]^q\lf\|f\one_{B_i\backslash B_{i-1}}\r\|_{L^p(\rn)}^q\r\}^{\frac1q},\noz
\end{align}
which, together with \eqref{5.22.x3} and \eqref{5.22.z1},
further implies that
\begin{align}\label{5.25.y3}
{\rm I}_1&\ls C_0\left\{\sum_{k\in\mathbb{Z}}\sum_{i=-\fz}^{k-\tau-1}
b^{\frac{(k-i)q}{2}[\max\{M_{0}(\omega),M_{\fz}(\omega)\}+\varepsilon-\frac{1}{p'}]}
\lf[\omega\lf(b^i\r)\r]^q\lf\|f\one_{B_i\backslash B_{i-1}}\r\|_{L^p(\rn)}^q
\right\}^{\frac{1}{q}}\\
&= C_0\left\{\sum_{i\in\mathbb{Z}}\sum_{k=i+\tau+1}^{\fz}
b^{\frac{(k-i)q}{2}[\max\{M_{0}(\omega),M_{\fz}(\omega)\}+\varepsilon-\frac{1}{p'}]}
\lf[\omega\lf(b^i\r)\r]^q\lf\|f\one_{B_i\backslash B_{i-1}}\r\|_{L^p(\rn)}^q
\right\}^{\frac{1}{q}}\noz\\
&\sim C_0\left\{\sum_{i\in\mathbb{Z}}
\lf[\omega\lf(b^i\r)\r]^q\lf\|f\one_{B_i\backslash B_{i-1}}\r\|_{L^p(\rn)}^q
\right\}^{\frac{1}{q}}\noz\\
&\le\lf[C_0+\|T\|_{L^p(\rn)\rightarrow L^p(\rn)}\r]
\|f\|_{\dot{\mathcal{K}}_{A,\omega}^{p,q}(\mathbb{R}^{n})},\noz
\end{align}
where the implicit positive constants are independent of both $T$ and $f$.

Second, we deal with ${\rm I}_2$.
By the boundedness of $T$ on $L^p(\rn)$ and the Minkowski inequality,
we find that, for any $k\in\zz$,
\begin{align}\label{5.22.y4}
\lf\|T(f_{k,2})\one_{B_k\backslash B_{k-1}}\r\|_{L^p(\rn)}
&\le\lf\|T(f_{k,2})\r\|_{L^p(\rn)}
\le\|T\|_{L^p(\rn)\rightarrow L^p(\rn)}\lf\|f_{k,2}\r\|_{L^p(\rn)}\\
&\le\|T\|_{L^p(\rn)\rightarrow L^p(\rn)}
\sum_{i=-\tau}^{\tau+2}\lf\|f\one_{B_{k+i}\backslash B_{k+i-1}}\r\|_{L^p(\rn)}.\noz
\end{align}
Moreover, from \cite[Lemma 1.1.3]{lyh22}, it follows that,
for any $k\in\zz$ and $i\in[-\tau,\tau+2]\cap\zz$,
\begin{align*}
\omega\lf(b^k\r)&\sim\omega\lf(b^{k+i}\r),
\end{align*}
which, combined with \eqref{5.22.x3} and \eqref{5.22.y4}, further implies that
\begin{align}\label{5.22.z2}
{\rm I}_2&\le\|T\|_{L^p(\rn)\rightarrow L^p(\rn)}
\left\{\sum_{k\in\mathbb{Z}}\lf[\omega\lf(b^k\r)\r]^q
\sum_{i=-\tau}^{\tau+2}\lf\|f\one_{B_{k+i}\backslash B_{k+i-1}}\r\|_{L^p(\rn)}^q
\right\}^{\frac{1}{q}}\\
&\ls\|T\|_{L^p(\rn)\rightarrow L^p(\rn)}\sum_{i=-\tau}^{\tau+2}
\left\{\sum_{k\in\mathbb{Z}}\lf[\omega\lf(b^{k+i}\r)\r]^q
\lf\|f\one_{B_{k+i}\backslash B_{k+i-1}}\r\|_{L^p(\rn)}^q
\right\}^{\frac{1}{q}}\noz\\
&\sim\|T\|_{L^p(\rn)\rightarrow L^p(\rn)}
\left\{\sum_{k\in\mathbb{Z}}\lf[\omega\lf(b^{k}\r)\r]^q
\lf\|f\one_{B_{k}\backslash B_{k-1}}\r\|_{L^p(\rn)}^q\right\}^{\frac{1}{q}}\noz\\
&\le\lf[C_0+\|T\|_{L^p(\rn)\rightarrow L^p(\rn)}\r]
\|f\|_{\dot{\mathcal{K}}_{A,\omega}^{p,q}(\mathbb{R}^{n})}.\noz
\end{align}

Finally, we deal with ${\rm I}_3$. By
the inequality in line 26 of \cite[p.\,7]{Bownik}, we conclude that,
for any $k,i\in\zz$ satisfying $i\in[k+\tau+3,\fz)$
and for any
$x,y\in\rn$ satisfying $x\in B_k\backslash B_{k-1}$
and $y\in B_i\backslash B_{i-1}$,
\begin{align*}
\rho(x-y)&\ge b^{-\tau}\rho(y)-\rho(x)=b^{i-\tau-1}-b^{k-1}\\
&\ge b^{i-\tau-1}-b^{i-\tau-4}=(b^{-\tau-1}-b^{-\tau-4})b^{i}.
\end{align*}
From this, \eqref{5.22.x1}, and the H\"older inequality,
we infer that, for any $k\in\zz$ and $x\in B_k\backslash B_{k-1}$,
\begin{align}\label{5.23.x2}
\lf|T(f_{k,3})(x)\r|&\le C_0\int_{\rn}\frac{|f_{k,3}(y)|}{\rho(x-y)}\,dy
=C_0\sum_{i=k+\tau+3}^{\fz}\int_{B_i\backslash B_{i-1}}\frac{|f(y)|}{\rho(x-y)}\,dy\\
&\ls C_0\sum_{i=k+\tau+3}^{\fz}b^{-\frac{i}{p}}
\lf\|f\one_{B_i\backslash B_{i-1}}\r\|_{L^p(\rn)},\noz
\end{align}
where the implicit positive constant is independent of both $T$ and $f$.
Moreover, by \eqref{5.23.x1}, we conclude that,
for any $k,i\in\zz$ satisfying $i\in[k+\tau+3,\fz)$,
\begin{align*}
\frac{\omega(b^k)}{\omega(b^i)}
&\ls b^{(k-i)[\min\{m_{0}(\omega),m_{\fz}(\omega)\}-\varepsilon]},
\end{align*}
which, combined with \eqref{5.23.x2}, further implies that,
for any $k\in\zz$,
\begin{align}\label{5.25.z2}
&\omega\lf(b^k\r)\lf\|T(f_{k,3})\one_{B_k\backslash B_{k-1}}\r\|_{L^p(\rn)}\\
&\quad\ls C_0\omega\lf(b^k\r)\sum_{i=k+\tau+3}^{\fz}b^{-\frac{i}{p}}
\lf\|f\one_{B_i\backslash B_{i-1}}\r\|_{L^p(\rn)}
\lf\|\one_{B_k\backslash B_{k-1}}\r\|_{L^p(\rn)}\noz\\
&\quad\ls C_0\sum_{i=k+\tau+3}^{\fz}b^{\frac{1}{p}(k-i)}\frac{\omega(b^k)}{\omega(b^i)}
\omega\lf(b^i\r)\lf\|f\one_{B_i\backslash B_{i-1}}\r\|_{L^p(\rn)}\noz\\
&\quad\ls C_0\sum_{i=k+\tau+3}^{\fz}b^{(k-i)
[\min\{m_{0}(\omega),m_{\fz}(\omega)\}-\varepsilon+\frac{1}{p}]}
\omega\lf(b^i\r)\lf\|f\one_{B_i\backslash B_{i-1}}\r\|_{L^p(\rn)},\noz
\end{align}
where the implicit positive constants are independent of both $T$ and $f$.
From \eqref{5.25.z2}, \eqref{5.22.z1}, and an argument similar to that
used in the estimations of \eqref{5.22.y3} and \eqref{5.25.y3},
we deduce that
\begin{align*}
{\rm I}_3&\ls\lf[C_0+\|T\|_{L^p(\rn)\rightarrow L^p(\rn)}\r]
\|f\|_{\dot{\mathcal{K}}_{A,\omega}^{p,q}(\mathbb{R}^{n})},
\end{align*}
where the implicit positive constant is independent of both $T$ and $f$.
This, together with \eqref{5.22.x3}, \eqref{5.22.y3}, \eqref{5.25.y3},
and \eqref{5.22.z2}, further implies \eqref{5.25.z1} and
hence finishes the proof of Lemma \ref{vmbhl-lem}.
\end{proof}

The following conclusion is a simple corollary of both
\eqref{5.23.x3} and Lemma \ref{vmbhl-lem}.

\begin{corollary}\label{5.23.y2}
Let $A$, $p$, $q$, and $\omega$ be the same as in Lemma \ref{vmbhl-lem}
and $\cm$ the Hardy--Littlewood maximal operator in \eqref{5.23.x3}.
Then there exists a positive constant $C$ such that, for any $f\in\dot{\mathcal{K}}_{A,\omega}^{p,q}(\mathbb{R}^{n})$,
\begin{align*}
\lf\|\cm(f)\r\|_{\dot{\mathcal{K}}_{A,\omega}^{p,q}(\mathbb{R}^{n})}
&\le C\|f\|_{\dot{\mathcal{K}}_{A,\omega}^{p,q}(\mathbb{R}^{n})}.
\end{align*}
\end{corollary}

Now, we show Lemma \ref{vmbhl}.

\begin{proof}[Proof of Lemma \ref{vmbhl}]
Let $\{f_j\}_{j\in\nn}\st L_{\loc}^1(\rn)$ be any given sequence and,
for any $r\in(1,\infty]$, $g\in\mathscr M(\rn)$, and $x\in\rn$, define
$$A(g)(x):=\lf\{\sum_{j\in\nn}
\lf[\cm(g\xi_j)(x)\r]^r\r\}^{\frac{1}{r}},$$
where, for any $j\in\nn$ and $y\in\rn$,
$$\xi_j(y):=\frac{f_j(y)}
{[\sum_{j\in\nn}|f_j(y)|^r]^{1/r}}\
{\rm if}\ \lf[\sum_{j\in\nn}\lf|f_j(y)\r|^r\r]^{1/r}\neq0$$
and $\xi_j(y):=0$ otherwise.
Obviously, for any $\lz\in\cc$ and $g\in\mathscr M(\rn)$,
$$A(\lz g)=|\lz|A(g).$$
Moreover, by the Minkowski inequality, we find that,
for any $g_1,g_2\in\mathscr M(\rn)$ and $x\in\rn$,
\begin{align*}
A(g_1+g_2)(x)
&=\lf\{\sum_{j\in\nn}\lf[
\cm\lf([g_1+g_2]\xi_j\r)(x)\r]^r\r\}^{\frac{1}{r}}\\
&\le\lf\{\sum_{j\in\nn}\lf[\cm(g_1\xi_j)(x)+
\cm(g_2\xi_j)(x)\r]^r\r\}^{\frac{1}{r}}\\
&\le\lf\{\sum_{j\in\nn}\lf[\cm(g_1\xi_j)(x)\r]^r\r\}^{\frac{1}{r}}
+\lf\{\sum_{j\in\nn}\lf[\cm(g_2\xi_j)(x)\r]^r\r\}^{\frac{1}{r}}\\
&=A(g_1)(x)+A(g_2)(x).
\end{align*}
Thus, $A$ is sublinear. Next, we prove that $A$ satisfies \eqref{5.22.x1}.
Indeed, notice that, for any $y\in\rn$,
\begin{align}\label{5.23.y1}
\lf[\sum_{j\in\nn}\lf|\xi_j(y)\r|^r\r]^{1/r}\le 1,
\end{align}
which, combined with \eqref{5.23.x3} and the Minkowski integral inequality,
further implies that, for any $x\notin\overline{\supp(g)}$,
\begin{align}\label{5.23.x4}
\lf|A(g)(x)\r|&=\lf\{\sum_{j\in\nn}\lf[\cm(g\xi_j)(x)\r]^r\r\}^{\frac{1}{r}}
\ls\lf\{\sum_{j\in\nn}\lf[\int_{\rn}
\frac{|g(y)\xi_j(y)|}{\rho(x-y)}\,dy\r]^r\r\}^{\frac{1}{r}}\\
&\le\int_{\rn}\lf\{\sum_{j\in\nn}
\lf[\frac{|g(y)\xi_j(y)|}{\rho(x-y)}\r]^r\r\}^{\frac{1}{r}}\,dy
\le\int_{\rn}\frac{|g(y)|}{\rho(x-y)}\,dy,\noz
\end{align}
where the implicit positive constant is independent of $\{f_j\}_{j\in\nn}$.
Thus, $A$ satisfies \eqref{5.22.x1}. In addition, from the Fefferman--Stein
vector-valued inequality of $\cm$ on the Lebesgue space $L^r(\rn)$
(see, for instance, \cite[p.\,104, Theorem 6.2]{Bownik})
and \eqref{5.23.y1}, we infer that, for any $g\in\mathscr M(\rn)$,
\begin{align}\label{5.23.z1}
\lf\|A(g)\r\|_{L^{p}(\rn)}
&=\lf\|\lf\{\sum_{j\in\nn}\lf[\cm(g\xi_j)\r]^r\r\}^{\frac{1}{r}}\r\|_{L^{p}(\rn)}\\
&\ls\lf\|\lf(\sum_{j\in\nn}|g\xi_j|^r\r)^{\frac{1}{r}}\r\|_{L^{p}(\rn)}
\le\|g\|_{L^{p}(\rn)},\noz
\end{align}
where the implicit positive constant is independent of $\{f_j\}_{j\in\nn}$.
Thus, $A$ satisfies all the assumptions of Lemma \ref{vmbhl-lem}.
This, together with the fact that the implicit positive constants
in both \eqref{5.23.x4} and \eqref{5.23.z1} are independent of $\{f_j\}_{j\in\nn}$,
further implies that $A$ is bounded on $\dot{\mathcal{K}}_{A,\omega}^{p,q}(\mathbb{R}^{n})$.
Letting $g:=(\sum_{j\in\nn}|f_j|^r)^{1/r}$, by Lemma \ref{vmbhl-lem}, we conclude that
\begin{align*}
\lf\|\lf\{\sum_{j\in\nn}\lf[\cm(f_j)\r]^r\r\}^{\frac{1}{r}}\r\|
_{\dot{\mathcal{K}}_{A,\omega}^{p,q}(\mathbb{R}^{n})}
&=\lf\|A(g)\r\|_{\dot{\mathcal{K}}_{A,\omega}^{p,q}(\mathbb{R}^{n})}\\
&\ls\|g\|_{\dot{\mathcal{K}}_{A,\omega}^{p,q}(\mathbb{R}^{n})}
=\lf\|\lf(\sum_{j\in\nn}\lf|f_j\r|^r\r)^{1/r}\r\|
_{\dot{\mathcal{K}}_{A,\omega}^{p,q}(\mathbb{R}^{n})},
\end{align*}
which completes the proof of Lemma \ref{vmbhl}.
\end{proof}

By Lemma \ref{vmbhl} and an argument similar to
that used in the proof of \cite[Lemma 4.3.10]{lyh22},
we obtain the following conclusion.

\begin{theorem}\label{vmbhl-cor}
Let $A$ be a dilation, $p,q\in(0,\infty)$, and $\omega\in M(\mathbb{R}_{+})$
satisfy $m_{0}(\omega)\in(-\frac{1}{p},\infty)$
and $m_{\fz}(\omega)\in(-\frac{1}{p},\infty)$.
Then, for any given $r\in(1,\infty)$ and
\begin{align}\label{5.25.y1}
u\in\left(0,\min\left\{p,\frac{1}
{\max\{M_{0}(\omega),M_{\fz}(\omega)\}+1/p}\right\}\right),
\end{align}
there exists a positive constant $C$ such that,
for any $\{f_j\}_{j\in\mathbb{N}}\subset L_{{\rm loc}}^{1}(\rn)$,
\begin{equation*}
\left\|\left\{\sum_{j\in\mathbb{N}}
\left[\cm(f_{j})\right]^{r}\right\}^{\frac{1}{r}}
\right\|_{[\dot{\mathcal{K}}_{A,\omega}^{p,q}(\mathbb{R}^{n})]^{1/u}}
\leq C\left\|\left\{\sum_{j\in\mathbb{N}}|f_{j}|^{r}\right\}^{\frac{1}{r}}
\right\|_{[\dot{\mathcal{K}}_{A,\omega}^{p,q}(\mathbb{R}^{n})]^{1/u}}.
\end{equation*}
\end{theorem}

\begin{proof}
On the one hand, from \cite[Lemma 1.3.1]{lyh22}, we deduce that
\begin{align}\label{5.25.x2}
\lf[\dot{\mathcal{K}}_{A,\omega}^{p,q}(\mathbb{R}^{n})\r]^{1/u}
=\dot{\mathcal{K}}_{A,\omega^u}^{p/u,q/u}(\mathbb{R}^{n}).
\end{align}
On the other hand, applying Lemma \ref{5.28.x1},
$m_{0}(\omega)>-\frac{1}{p}$, $m_{\fz}(\omega)>-\frac{1}{p}$,
and \eqref{5.25.y1}, we conclude that
$$\min\{m_0(\omega^u),m_{\fz}(\omega^u)\}
=u\min\{m_0(\omega),m_{\fz}(\omega)\}
>-\frac{1}{(p/u)}$$
and
$$\max\{M_0(\omega^u),M_{\fz}(\omega^u)\}
=u\max\{M_0(\omega),M_{\fz}(\omega)\}
<u\lf(\frac1u-\frac1p\r)=\frac{1}{(p/u)'}.$$
These, combined with \eqref{5.25.x2} and Lemma \ref{vmbhl},
further implies that
\begin{align*}
&\left\|\left\{\sum_{j\in\mathbb{N}}
\left[\cm(f_{j})\right]^{r}\right\}^{\frac{1}{r}}
\right\|_{[\dot{\mathcal{K}}_{A,\omega}^{p,q}(\mathbb{R}^{n})]^{1/u}}\\
&\quad=\left\|\left\{\sum_{j\in\mathbb{N}}
\left[\cm(f_{j})\right]^{r}\right\}^{\frac{1}{r}}
\right\|_{\dot{\mathcal{K}}_{A,\omega^u}^{p/u,q/u}(\mathbb{R}^{n})}
\ls\left\|\left\{\sum_{j\in\mathbb{N}}|f_{j}|^{r}\right\}^{\frac{1}{r}}
\right\|_{\dot{\mathcal{K}}_{A,\omega^u}^{p/u,q/u}(\mathbb{R}^{n})}\\
&\quad=\left\|\left\{\sum_{j\in\mathbb{N}}|f_{j}|^{r}\right\}^{\frac{1}{r}}
\right\|_{[\dot{\mathcal{K}}_{A,\omega}^{p,q}(\mathbb{R}^{n})]^{1/u}},
\end{align*}
which completes the proof of Theorem \ref{vmbhl-cor}.
\end{proof}

The following conclusion shows the boundedness of
certain powered Hardy--Littlewood maximal operator
on the associate space of certain power of
$\dot{\mathcal{K}}_{A,\omega}^{p,q}(\mathbb{R}^{n})$;
see \cite[Lemma 1.8.6]{lyh22} for the standard Euclidean space case.

\begin{theorem}\label{mbhal}
Let $A$ be a dilation, $p,q\in(0,\infty)$,
and $\omega\in M(\mathbb{R}_{+})$ satisfy
$m_{0}(\omega)\in(-\frac{1}{p},\infty)$
and $m_{\fz}(\omega)\in(-\frac{1}{p},\infty)$.
Then, for any given
$$s\in\left(0,\min\left\{p,q,\frac{1}
{\max\{M_{0}(\omega),M_{\fz}(\omega)\}+1/p}\right\}\right)$$
and
$$t\in\left(\max\left\{p,\frac{1}
{\min\{m_{0}(\omega),m_{\fz}(\omega)\}+1/p}\right\},\infty\right],$$
the Herz space $[\dot{\mathcal{K}}_{A,\omega}^{p,q}(\mathbb{R}^{n})]^{1/s}$
is a ball Banach function space and there exists a positive
constant $C$ such that, for any $f\in L_{{\rm loc}}^{1}(\rn)$,
\begin{equation}\label{mbhal1}
\left\|\cm^{((t/s)')}(f)\right\|
_{([\dot{\mathcal{K}}_{A,\omega}^{p,q}(\mathbb{R}^{n})]^{1/s})'}
\leq C \left\|f\right\|
_{([\dot{\mathcal{K}}_{A,\omega}^{p,q}(\mathbb{R}^{n})]^{1/s})'}.
\end{equation}
\end{theorem}

To prove this theorem, we need more preparations.
By Theorem \ref{5.27.x1} and an argument similar to
that used in the proof of \cite[Theorem 1.2.46]{lyh22},
we obtain the following conclusion; we omit the details.

\begin{lemma}\label{5.27.y2}
Let $A$ be a dilation, $p,q\in[1,\infty]$,
and $\omega\in M(\mathbb{R}_{+})$ satisfy
$$-\frac1p<m_0(\omega)\le M_0(\omega)<\frac{1}{p'},$$
where $\frac1p+\frac{1}{p'}=1$.
Then the anisotropic local generalized Herz space
$\dot{\mathcal{K}}_{A,\omega}^{p,q}(\mathbb{R}^{n})$
is a ball Banach function space.
\end{lemma}

\begin{proof}[Proof of Theorem \ref{mbhal}]
Obviously,
$$s<\frac{1}{\max\{M_{0}(\omega),M_{\fz}(\omega)\}+1/p}
\le\frac{1}{M_{0}(\omega)+1/p},$$
which, combined with Lemma \ref{5.28.x1}, further implies that
\begin{align}
M_{0}\lf(\omega^s\r)=sM_{0}\lf(\omega\r)
<s\lf(\frac1s-\frac1p\r)=\frac{1}{(p/s)'}
\end{align}
and
\begin{align}
m_{0}\lf(\omega^s\r)=sm_{0}\lf(\omega\r)
>-\frac{s}{p}=-\frac{1}{p/s}.
\end{align}
From these, an argument similar to that used in the proof of
\cite[Lemma 1.3.1]{lyh22}, the assumptions that $p/s>1$ and
$q/s>1$, and Lemma \ref{5.27.y2}, we infer that
\begin{align}\label{5.27.z1}
\lf[\dot{\mathcal{K}}_{A,\omega}^{p,q}(\mathbb{R}^{n})\r]^{1/s}
=\dot{\mathcal{K}}_{A,\omega^s}^{p/s,q/s}(\mathbb{R}^{n})
\end{align}
is a ball Banach function space.

Next, we prove \eqref{mbhal1}. On the one hand,
by \eqref{5.27.z1} and arguments similar to those used
in the proofs of \cite[Lemma 1.3.1 and Theorem 1.7.9]{lyh22},
we conclude that
\begin{align}\label{5.27.z2}
\lf[\lf(\lf[\dot{\mathcal{K}}_{A,\omega}^{p,q}
(\mathbb{R}^{n})\r]^{1/s}\r)'\r]^{1/(t/s)'}
&=\lf[\dot{\mathcal{K}}_{A,1/\omega^s}^{(p/s)',(q/s)'}
(\mathbb{R}^{n})\r]^{1/(t/s)'}\\
&=\dot{\mathcal{K}}_{A,\omega^{-s(t/s)'}}
^{(p/s)'/(t/s)',(q/s)'/(t/s)'}(\mathbb{R}^{n}).\noz
\end{align}
On the other hand, from Lemma \ref{5.28.x1}
and the assumptions that $$s<\frac{1}{\max\{M_{0}(\omega),M_{\fz}(\omega)\}+1/p}$$
and
$$t>\frac{1}{\min\{m_{0}(\omega),m_{\fz}(\omega)\}+1/p},$$
we deduce that
\begin{align*}
\min\lf\{m_{0}\lf(\omega^{-s(t/s)'}\r),m_{\fz}\lf(\omega^{-s(t/s)'}\r)\r\}
&=-s(t/s)'\max\{M_{0}\lf(\omega\r),M_{\fz}\lf(\omega\r)\}\\
&>-s(t/s)'\lf(\frac1s-\frac1p\r)=-\frac{1}{(p/s)'/(t/s)'}
\end{align*}
and
\begin{align*}
\max\lf\{M_{0}\lf(\omega^{-s(t/s)'}\r),M_{\fz}\lf(\omega^{-s(t/s)'}\r)\r\}
&=-s(t/s)'\min\{m_{0}\lf(\omega\r),m_{\fz}\lf(\omega\r)\}\\
&<-s(t/s)'\lf(\frac1t-\frac1p\r)=1-\frac{1}{(p/s)'/(t/s)'}\\
&=\frac{1}{[(p/s)'/(t/s)']'}.
\end{align*}
These, together with \eqref{5.27.z2} and Corollary \ref{5.23.y2},
further imply that
\begin{align*}
\left\|\cm^{((t/s)')}(f)\right\|
_{([\dot{\mathcal{K}}_{A,\omega}^{p,q}(\mathbb{R}^{n})]^{1/s})'}
&=\left\|\cm\lf(|f|^{(t/s)'}\r)\right\|
_{\dot{\mathcal{K}}_{A,\omega^{-s(t/s)'}}
^{(p/s)'/(t/s)',(q/s)'/(t/s)'}(\mathbb{R}^{n})}\\
&\ls\left\||f|^{(t/s)'}\right\|
_{\dot{\mathcal{K}}_{A,\omega^{-s(t/s)'}}
^{(p/s)'/(t/s)',(q/s)'/(t/s)'}(\mathbb{R}^{n})}\\
&=\left\|f\right\|
_{([\dot{\mathcal{K}}_{A,\omega}^{p,q}(\mathbb{R}^{n})]^{1/s})'}.
\end{align*}
This finishes the proof of \eqref{mbhal1}
and hence Theorem \ref{mbhal}.
\end{proof}

\begin{remark}\label{6.3rem}
Let $p,q\in(0,\infty)$ and $\omega\in M(\mathbb{R}_{+})$
satisfy $m_{0}(\omega)\in(-\frac{1}{p},\infty)$
and $m_{\fz}(\omega)\in(-\frac{1}{p},\infty)$.
Choose
$$p_-:=\min\left\{p,q,
\frac{1}{\max\left\{M_{0}(\omega),M_{\infty}(\omega)\right\}+1/p}\right\},
\ \tz_0\in\left(0,\underline{p}\right),$$
and
$$p_0\in\left(\max\left\{1,p,\frac{1}{\min\left\{m_{0}(\omega),
m_{\infty}(\omega)\right\}+1/p}\right\},\infty\right),$$
where $\underline{p}$ is the same as in \eqref{underp}.
From these and Theorems \ref{5.27.x1}, \ref{vmbhl-cor}, and \ref{mbhal},
we deduce that $\dot{\mathcal{K}}_{A,\omega}^{p,q}(\mathbb{R}^{n})$
satisfies both Assumptions \ref{F-SVMI} and \ref{HMLonAsso}.
Further assume that
$$N\in\nn\cap\lf[\lf\lfloor\lf(\frac{1}{\min\{p,q,
\frac{1}{\max\{M_{0}(\omega),M_{\infty}(\omega)\}+1/p}\}}-1\r)
\frac{\ln b}{\ln(\lz_-)}\r\rfloor+2,\infty\r).$$
Then $\dot{\mathcal{K}}_{A,\omega}^{p,q}(\mathbb{R}^{n})$ satisfies all
the assumptions of Definition \ref{HXA}.
Let $H\dot{\mathcal{K}}_{A,\omega}^{p,q}(\mathbb{R}^{n})$ be the
anisotropic local generalized Herz--Hardy space defined in
Definition \ref{HXA} with $X:=\dot{\mathcal{K}}_{A,\omega}^{p,q}(\mathbb{R}^{n})$.
Then the main results of \cite{wyy22} and \cite{lyy23},
for example, the maximal function characterizations of $H\dot{\mathcal{K}}_{A,\omega}^{p,q}(\mathbb{R}^{n})$
(see \cite[Theorem 3.9]{wyy22}),
the (finite) atomic characterizations of $H\dot{\mathcal{K}}_{A,\omega}^{p,q}(\mathbb{R}^{n})$
(see \cite[Theorems 4.3 and 5.4]{wyy22}),
the molecular characterization of
$H\dot{\mathcal{K}}_{A,\omega}^{p,q}(\mathbb{R}^{n})$
(see \cite[Theorem 6.3]{wyy22}),
the boundedness of Calder\'on--Zygmund operators on $H\dot{\mathcal{K}}_{A,\omega}^{p,q}(\mathbb{R}^{n})$
(see \cite[Theorems 7.10 and 7.11]{wyy22}),
the Littlewood--Paley function characterizations of $H\dot{\mathcal{K}}_{A,\omega}^{p,q}(\mathbb{R}^{n})$
(see \cite[Theorems 5.4, 5.5, and 5.6]{lyy23}),
and the dual theorem of
$H\dot{\mathcal{K}}_{A,\omega}^{p,q}(\mathbb{R}^{n})$
(see \cite[Theorem 3.25]{lyy23}) still hold true.
\end{remark}

Let $p,q\in(0,1)$ and $\omega\in M(\mathbb{R}_{+})$
satisfy $m_{0}(\omega)\in(-\frac{1}{p},\infty)$
and $m_{\fz}(\omega)\in(-\frac{1}{p},\infty)$.
Choose a $q_0=1$. By the above discussion, an argument similar to that
used in the proof of \cite[Lemma 4.9.4]{lyh22}, and \cite[(4.135)]{lyh22},
we find that all the assumptions of Theorems
\ref{s2t1}, \ref{s3t1}, and \ref{s3t2} are satisfied with $X:=\dot{\mathcal{K}}_{A,\omega}^{p,q}(\mathbb{R}^{n})$.
Applying Theorems \ref{s2t1}, \ref{s3t1},
and \ref{s3t2}, we obtain the following result.

\begin{theorem}\label{tholocal}
Let $p,q\in(0,1)$ and $\omega\in M(\mathbb{R}_{+})$
satisfy $m_{0}(\omega)\in(-\frac{1}{p},\infty)$
and $m_{\fz}(\omega)\in(-\frac{1}{p},\infty)$.
Then Theorems \ref{s2t1}, \ref{s3t1}, and \ref{s3t2} still hold true
with $X$ replaced by $\dot{\mathcal{K}}_{A,\omega}^{p,q}(\mathbb{R}^{n})$.
\end{theorem}

\begin{remark}\label{relocal}
We point out that Theorem \ref{tholocal} is completely new.
When $A:=2I_{n\times n}$, the corresponding results of
Theorem \ref{tholocal} can be found in \cite[Theorem 4.9.1]{lyh22}.
\end{remark}


\bigskip

\noindent Chaoan Li and Dachun Yang (Corresponding author)

\medskip

\noindent Laboratory of Mathematics and Complex Systems
(Ministry of Education of China),
School of Mathematical Sciences, Beijing Normal University,
Beijing 100875, The People's Republic of China

\smallskip

\noindent {\it E-mails}:
\texttt{cali@mail.bnu.edu.cn} (C. Li)

\noindent\phantom{{\it E-mails:}}
\texttt{dcyang@bnu.edu.cn} (D. Yang)

\bigskip

\noindent Xianjie Yan

\medskip

\noindent Institute of Contemporary Mathematics,
School of Mathematics and Statistics, Henan University,
Kaifeng 475004, The People's Republic of China

\smallskip

\noindent{{\it E-mail:}}
\texttt{xianjieyan@henu.edu.cn}


\end{document}